\newcommand*{\DashedArrow}[1][]{\mathbin{\tikz [baseline=-0.25ex,-latex, dashed,#1] \draw [#1] (0pt,
0.5ex) -- (1.3em,0.5ex);}} 
\def\bc{\begin{center}}
\def\ec{\end{center}}
\def\px{\frac{\partial}{\partial x_1}}
\def\py{\frac{\partial}{\partial x_2}}
\def\pz{\frac{\partial}{\partial x_n}}
\newcommand{\cod}{\mathrm {\,codim}\,}
\newtheorem{thm}{Theorem}[section]
\newtheorem{prop}[thm]{Proposition}
\newtheorem{lemma}[thm]{Lemma}
\newtheorem{definition}[thm]{Definition}
\newtheorem{remark}[thm]{Remark}
\newtheorem{cor}[thm]{Corollary}
\newtheorem{example}[thm]{Example}
\newtheorem{claim}[thm]{Claim}
\newtheorem*{thmA}{Theorem A}
\newtheorem*{thmB}{Theorem B}
\newtheorem*{thmB1}{Theorem B.1}
\newtheorem*{thmB2}{Theorem B.2}
\title[Foliations on projective spaces associated]{Foliations on projective spaces associated to the affine Lie Algebra }
\author[R. C. da Costa]{Raphael Constant da Costa}
\address{Rio de Janeiro State University (UERJ), R. S\~ao Francisco Xavier, 524, Maracan\~a,  20550-900, Rio de Janeiro, Brazil.}
\email{raphaelconstant@ime.uerj.br}
\numberwithin{equation}{section}
\begin{document}
\newcommand{\rot}{\mathrm {\,rot}\,}
\newcommand{\aut}{\mathrm {\,Aut}\,}
\newcommand{\dive}{\mathrm {\,div}\,}
\newcommand{\degr}{\mathrm {\,deg}\,}
\newcommand{\sing}{\mathrm {\,Sing}\,}
\newcommand{\ld}{\mathrm {\,Ld}\,}
\newcommand{\im}{\mathrm {\,Image}\,}

\begin{abstract}
In this work, we construct some irreducible components of the space of two-dimensional holomorphic foliations on $\mathbb{P}^n$ associated to some algebraic representations of the affine Lie algebra $\mathfrak{aff}(\mathbb{C})$. We give a description of the generalized Kupka components, obtaining a classification of them in terms of the degree of the foliations, in both cases $n=3$ and $n=4$. 
\end{abstract}

\maketitle

\tableofcontents

\section{Introduction}
We consider a holomorphic foliation of dimension $k$ and degree $d$ on the projective space $\mathbb{P}^n$, $n \geq 3$. The set of those foliations, which we denote by $\mathscr{F}_k(d,n)$, has a natural structure of quasi-projective variety. In fact, such foliations are defined by an integrable ($n-k$)-form $\Omega$ on $\mathbb{C}^{n+1}$, whose coefficients are homogeneous polynomials of degree $d+1$ satisfying $i_{R_{n+1}} \Omega =0$, where $R_{n+1}$ denotes the radial vector field on $\mathbb{C}^{n+1}$. The ($n-k$)-form $\Omega$ is defined up to multiplication by a non-zero scalar, giving rise to a projective space, and the integrability condition imposes polynomial relations on that space. Finally, from the condition $\cod (\sing \Omega) \geq 2$, where $\sing (\Omega)$ denotes the singular set of $\Omega$, we identify $\mathscr{F}_k(d,n)$ with a Zariski open subset of a projective variety. A very interesting question is to describe the irreducible components of $\mathscr{F}_k(d,n)$. The known results are mostly concentrated in the codimension one case ($k=n-1$). Some of the irreducible components of $\mathscr{F}_{n-1}(d,n)$ have been described: linear pull-back \cite{3}, rational \cite{2}, logarithmic \cite{7}, generic pull-back \cite{6}, associated to the affine Lie algebra \cite{1}, rigid \cite{5} and more recently branched pull-back \cite{9}. A complete description of the irreducible components of $\mathscr{F}_{n-1}(d,n)$ is known only in low degrees. In \cite{10} it has been shown that $\mathscr{F}_{n-1}(0,n)$ has only one irreducible component, while $\mathscr{F}_{n-1}(1,n)$ consists of two irreducible components. The classification of $\mathscr{F}_{n-1}(2,n)$ was achieved by Cerveau and Lins Neto in \cite{4}, where they show that $\mathscr{F}_{n-1}(2,n)$ has six irreducible components. The literature on the irreducible components of $\mathscr{F}_k(d,n)$, $1\leq k <n-1$, is not as extensive in comparison with the codimension one case. Some results in this direction can be found in \cite{5} and \cite{20}. The classification of $\mathscr{F}_k(0,n)$ was given in \cite[Theorem 3.8]{18}, while a complete description of $\mathscr{F}_k(1,n)$ was obtained in \cite[Theorem 6.2 and Corollary 6.3]{19}.

In this paper, we construct and classify certain components of $\mathscr{F}_{2}(d,n)$ associated to the affine Lie Algebra $\mathfrak{aff}(\mathbb{C})=\langle e_1, e_2 \rangle$, where $[e_1, e_2]=e_2$. These components include those described in \cite{1}. Let $p_1>p_2>\cdots>p_n \geq 1$ be relatively prime positive integers and $S$ the diagonal vector field of $\mathbb{C}^n$ defined by $$S=p_1 x_1 \px +p_2 x_2 \py + \cdots +p_n x_n \pz.$$

Let $X$ be another polynomial vector field on $\mathbb{C}^n$ such that $[S,X]=\lambda X$, for some $\lambda \in \mathbb{Z}$. Note that if $\lambda \neq 0$, $S$ and $X$ give a representation of $\mathfrak{aff}(\mathbb{C})$ in the algebra of polynomial vector fields of $\mathbb{C}^n$. In addition, if $S$ and $X$ are linearly independent at generic points, they give rise to a dimension two algebraic foliation $\mathcal{F}=\overline{\mathcal{F}}(S,X)$ on $\mathbb{C}^n$, which is defined by the following integrable ($n-2$)-form $$\omega = i_{S} i_{X}(dx_1 \wedge dx_2 \wedge \cdots \wedge dx_n).$$

Define $\mathfrak{P}=(p_1,p_2,\ldots,p_n)$ and $$\mathcal{F}(\mathfrak{P},\lambda,d+1)=\{\mathcal{F} \in \mathscr{F}_2(d+1,n) \mid \mathcal{F}= \overline{\mathcal{F}}(S,X) \text{ in some affine chart} \}.$$

\begin{remark}
In the last definition, we can choose $X$ in such a way that the one dimensional foliation on $\mathbb{P}^n$ generated by $X$ has degree $d$, which simplifies some calculations (see Lemma \ref{lemmadeg}). This is the reason we do not adopt $\mathcal{F}(\mathfrak{P},\lambda,d)$.
\end{remark}

It turns out that $\overline{\mathcal{F}(\mathfrak{P},\lambda,d+1)}$, the Zariski closure of $\mathcal{F}(\mathfrak{P},\lambda,d+1)$, is an irreducible subvariety of $\mathscr{F}_2(d+1,n)$ (see Proposition \ref{irreducibility}). In the cases $n=3$ and $n=4$, we use $\mathcal{F}(p,q,r;\lambda,d+1)$ and  $\mathcal{F}(p,q,r,s;\lambda,d+1)$, respectively. Next we present some conditions that entail the existence of irreducible components $\overline{\mathcal{F}(\mathfrak{P},\lambda,d+1)}$ of $\mathscr{F}_2(d+1,n)$.

Let $\omega$ be a germ of integrable $(n-2)$-form defined at $p \in \mathbb{C}^n$, with $p \in \sing(\mathcal{\omega})$ and $n \geq 3$.

\begin{definition}
We say that $p$ is a \textit{weakly generalized Kupka }(WGK) singularity of $\omega$ if $\cod(\sing(d\omega)) \geq 3$, where by convention $\cod(\emptyset) = n+1$. In addition, if $\cod(\sing(d\omega)) \geq n$ we say that $p$ is a \textit{generalized Kupka }(GK) singularity. 
\end{definition}

\begin{definition}
A dimension two holomorphic foliation $\mathcal{F}$ on $\mathbb{P}^n$ is WGK (resp. GK) if all the singularities of $\mathcal{F}$ are WGK (resp. GK).
\end{definition}

The following result was proved in \cite{1}.

\begin{thm}\label{thefour}
Suppose that $\mathcal{F}(p,q,r;\lambda,d+1)$ contains some GK foliation, where $\lambda \neq 0$ and $p>q>r$ are relatively prime positive integers. Then $\overline{\mathcal{F}(p,q,r;\lambda,d+1)}$ is an irreducible component of $\mathscr{F}_2(d+1,3)$.
\end{thm}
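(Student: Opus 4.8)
The plan is to prove that $\overline{\mathcal{F}(p,q,r;\lambda,d+1)}$ is an irreducible component of $\mathscr{F}_2(d+1,3)$ by showing it is a *maximal* irreducible subvariety. Since irreducibility is already granted by Proposition \ref{irreducibility}, the entire content is a dimension/deformation argument: I need to show that a generic foliation $\mathcal{F}_0 \in \mathcal{F}(p,q,r;\lambda,d+1)$ admits no deformations within $\mathscr{F}_2(d+1,3)$ that leave the family $\overline{\mathcal{F}(p,q,r;\lambda,d+1)}$. The standard strategy is stability under deformation: if I can show that any foliation $\mathcal{F}_t$ in a small analytic/algebraic deformation of $\mathcal{F}_0$ is again of the form $\overline{\mathcal{F}}(S_t,X_t)$ for a perturbed diagonal field $S_t$ and companion field $X_t$ satisfying $[S_t,X_t]=\lambda X_t$, then the family is open in $\mathscr{F}_2(d+1,3)$ near $\mathcal{F}_0$, and being also closed (Zariski closure) and irreducible, it is a component.

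**The key device should be the GK hypothesis.** Let me pick a generic $\mathcal{F}_0$ in the family that is GK, meaning $\cod(\sing(d\omega_0)) \geq n = 3$. The GK (Kupka-type) condition is the engine that forces local rigidity: near a GK singular point the foliation has a prescribed local normal form, and such normal forms are preserved under deformation because the codimension of the singular set of $d\omega$ is an open condition and the local structure of the $1$-form $d\omega$ is determined up to the foliation it defines. First I would establish that for a generic element the local models at the singularities of $\omega_0$ persist: a deformation $\omega_t$ still has $\cod(\sing(d\omega_t)) \geq 3$, so each $\omega_t$ remains GK for small $t$. Then I would use the local GK normal form to glue these local pieces into a global statement: the foliation $\mathcal{F}_t$ is still generated by two commuting-up-to-scalar vector fields $S_t, X_t$, i.e.\ it lies in $\bigcup_{\lambda'} \overline{\mathcal{F}(p,q,r;\lambda',d+1)}$; a continuity/discreteness argument on $\lambda \in \mathbb{Z}$ and on the weights $(p,q,r)$ (which are integers and hence locally constant in $t$) then pins down the same invariants $(p,q,r;\lambda)$, placing $\mathcal{F}_t$ in the original family.

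**The main obstacle** will be the passage from local to global, i.e.\ showing that the persistence of the GK local normal form at each singularity actually reconstructs a \emph{global} pair $(S_t,X_t)$ with the affine Lie algebra relation $[S_t,X_t]=\lambda X_t$, rather than merely a local product structure. Concretely, from $\omega_t = i_{S_t} i_{X_t}(dx_1\wedge dx_2 \wedge dx_3)$ one recovers $S_t$ and $X_t$ only up to the ambiguity in factoring a decomposable $1$-form $\omega_t$; I must show this factorization is globally consistent and that the recovered $S_t$ is again (projectively) a diagonalizable semisimple field with the same integer weights, which is where the GK condition on $d\omega$ and the discreteness of the spectrum of $\mathrm{ad}(S)$ do the work. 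I expect the argument to proceed by first verifying that $d\omega_t$ controls the integrability and, via the Kupka phenomenon, that $\sing(\omega_t)$ carries enough structure (for instance a transversal type that is locally constant) to reconstruct the two foliating fields; the remaining step is a cohomological or deformation-theoretic vanishing showing there are no nontrivial first-order deformations transverse to the family. This last transversality/vanishing step is the crux and is precisely where I would invest the detailed computation.
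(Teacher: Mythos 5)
Your overall architecture (openness of the family under deformation, plus irreducibility and closedness) is the right skeleton, and your identification of the local-to-global reconstruction of $(S_t,X_t)$ as the crux is accurate. But there is a genuine gap at exactly that crux: the mechanism you propose for it is not the one that works, and the one that works is absent. The weights $(p,q,r)$ and the eigenvalue $\lambda$ are \emph{not} recovered from the Kupka phenomenon or from a cohomological vanishing of first-order deformations. They are recovered from the singular point $q_0$ where $S$ vanishes --- which is precisely a \emph{non-Kupka} point, since $Y=\rot(\omega)$ vanishes there, so $d\omega(q_0)=0$ and the Kupka local product structure gives you nothing at $q_0$. The paper's route (Proposition \ref{prop-sing}, Theorem \ref{tsqh} and Theorem \ref{thm-est-sqh}, standing in for Proposition 1 of \cite{1}) is: because $[S,DY(q_0)]=\lambda\, DY(q_0)$ with $\lambda\neq 0$, Lemma \ref{nilpotent.lemma} forces $DY(q_0)$ to be nilpotent, so $q_0$ is a \emph{quasi-homogeneous} singularity of type $(p,q,r;\lambda)$; the stability theorem for quasi-homogeneous singularities then says that every nearby foliation $\mathcal{F}_t$ has a nearby quasi-homogeneous singularity of the \emph{same} type, which hands you back a semisimple field $S_t$ with eigenvalues $p,q,r$ and a field $Y_t$ with $[S_t,Y_t]=\lambda Y_t$ in a neighbourhood of that point. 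Your appeal to ``integers are locally constant'' is circular without this: you have no a priori continuity of the weights in $t$ to invoke discreteness against. Note also that your proposal never uses the hypothesis $\lambda\neq 0$, which is essential --- it is exactly what makes $DY(q_0)$ nilpotent and hence $q_0$ quasi-homogeneous; for $\lambda=0$ the family can fail to be a component (cf.\ the logarithmic degeneration at the end of the proof of Corollary \ref{gen-klein-lie}).

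The remaining globalization step is also done differently from what you sketch: rather than a transversality or $H^1$-vanishing computation, one uses that at WGK/GK singularities the tangent sheaf of the deformed foliation is locally free with two generators, together with the splitting $\mathcal{T}\mathcal{F}=\mathcal{O}\oplus\mathcal{O}(1-d)$ (Proposition \ref{splits}) and its stability, to extend the local pair $(S_t,X_t)$ produced at the quasi-homogeneous point to global vector fields on $\mathbb{P}^3$ in an affine chart. So the two ingredients you would need to supply to close the gap are (i) the persistence, with fixed type, of the quasi-homogeneous singularity at $q_0$, and (ii) the split-tangent-sheaf globalization; neither is a routine consequence of the Kupka normal form you lean on.
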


The irreducible components $\overline{\mathcal{F}(\mathfrak{P},\lambda,d+1)} \subset \mathscr{F}_2(d+1,n)$ containing GK foliations will be called GK components.

\begin{cor}\label{corklein}
For $d \geq 1$, $\overline{\mathcal{F}(d^2+d+1,d+1,1;-1,d+1)}$ is an irreducible component of $\mathscr{F}_2(d+1,3)$ of dimension $N$, where $N=13$ if $d=1$ and $N=14$ if $d>1$. Moreover, this component is the closure of a $\mathrm{PGL}(4,\mathbb{C})$ orbit on $\mathscr{F}_2(d+1,3)$.
\end{cor}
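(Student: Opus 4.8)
The plan is to deduce the statement from Theorem \ref{thefour} together with an explicit description of the eigenvector $X$ and a torus/orbit computation. First I would record that $p=d^2+d+1>q=d+1>r=1$ are relatively prime (automatic since $r=1$) and that $\lambda=-1\neq 0$, so that Theorem \ref{thefour} applies as soon as I produce one GK foliation in $\mathcal{F}(d^2+d+1,d+1,1;-1,d+1)$. Solving $[S,X]=-X$ with $S=(d^2+d+1)x_1\px+(d+1)x_2\py+x_3\pz$ amounts to the weight equations $S(f_i)=(p_i-1)f_i$ for the components $f_i$ of $X$; the monomials these allow have degrees ranging up to $d^2+d$, but imposing that $\overline{\mathcal{F}}(S,X)$ has degree $d+1$ (equivalently, via Lemma \ref{lemmadeg}, that the one dimensional foliation generated by $X$ has degree $d$) forces $\deg f_1\le d$ and $\deg f_2\le d$, which selects exactly $X=A\,x_2^{d}\px+B\,x_3^{d}\py+C\,\pz$. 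Thus the foliations obtained in the standard chart are parametrized by $[A:B:C]\in\mathbb{P}^2$.

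Next I would form $\omega=i_S i_X(dx_1\wedge dx_2\wedge dx_3)$, whose affine coefficients are then of degree $d+1$, compute $d\omega$, and verify the GK condition $\cod(\sing(d\omega))\ge 3$ for a convenient choice such as $A=B=C=1$, checking both the affine part and the points of $\mathbb{P}^3$ at infinity. Granting this, Theorem \ref{thefour} gives that $\overline{\mathcal{F}(d^2+d+1,d+1,1;-1,d+1)}$ is an irreducible component of $\mathscr{F}_2(d+1,3)$. I expect this GK verification, and in particular the control of the singular scheme of $d\omega$ along the hyperplane at infinity, to be the most delicate point.

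To identify the component with a $\mathrm{PGL}(4,\mathbb{C})$-orbit closure and to compute $N$, I would use the diagonal torus $T=\{x_i\mapsto t_i x_i\}\subset\mathrm{PGL}(4,\mathbb{C})$, which preserves $S$ and the standard chart and acts on $[A:B:C]$ through the characters $\chi_A=(1,-d,0)$, $\chi_B=(0,1,-d)$, $\chi_C=(0,0,1)$ in the $(t_1,t_2,t_3)$-weight lattice. Since $\chi_A-\chi_C$ and $\chi_B-\chi_C$ are linearly independent, $T$ acts on $\mathbb{P}^2$ with a dense two dimensional orbit, so all generic members of the family are projectively equivalent and the component equals $\overline{\mathrm{PGL}(4,\mathbb{C})\cdot\mathcal{F}_0}$ for a single $\mathcal{F}_0$. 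Finally I would compute $N=15-\dim\,\mathrm{Stab}(\mathcal{F}_0)$ by determining the isotropy Lie algebra inside $\mathfrak{pgl}(4,\mathbb{C})$. The field $S$ always lies in it: it generates the foliation, and the relation $p_1-d\,p_2=p_2-d\,p_3=1$ makes its flow scale $A,B,C$ equally, hence act trivially on $\mathbb{P}^2$. For $d>1$ a direct check of the centralizer conditions $L_Z\omega\wedge\omega=0$ for $Z\in\mathfrak{pgl}(4,\mathbb{C})$ should show this is the whole isotropy, giving $N=14$; for $d=1$ the field $X=A\,x_2\px+B\,x_3\py+C\,\pz$ is itself affine, lies in $\mathfrak{pgl}(4,\mathbb{C})$, and preserves $\mathcal{F}_0$ because $[S,X]=-X$, so $\dim\,\mathrm{Stab}(\mathcal{F}_0)=2$ and $N=13$. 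The remaining obstacle is precisely ruling out, for $d>1$, any extra linear symmetry beyond $\langle S\rangle$, which I would settle by this direct computation.
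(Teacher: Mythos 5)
Your strategy is sound and reaches the right conclusion, but it takes a genuinely different route from the paper. The paper does not argue directly from Theorem \ref{thefour} with $\lambda=-1$: it first passes, via Proposition \ref{symmetry}, to the presentation $\mathcal{F}(d^2+d+1,d^2+d,d^2;d^3,d+1)$ in which $\lambda>0$, so that the quasi-homogeneity machinery of Section 3 applies, and then the general Corollary \ref{gen-klein-lie} (of which this statement is the case $n=3$) checks the arithmetic conditions of Theorem \hyperref[b]{B}, computes $W_0$ explicitly (giving $\dim V_0=2$), and reads the dimension off Proposition \ref{dim}, which already contains the fibre computation for the parametrization $\Phi$. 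Your identification of the admissible fields $X=A\,x_2^{d}\,\partial/\partial x_1+B\,x_3^{d}\,\partial/\partial x_2+C\,\partial/\partial x_3$ is correct and matches the paper's $W_0$ (note $\dive(X)=0$, so $Y=\rot(\omega)=\tau X$), and your torus computation correctly shows that the generic members form a single $\mathrm{PGL}(4,\mathbb{C})$-orbit; replacing Proposition \ref{dim} by the orbit--stabilizer count $N=15-\dim \mathrm{Stab}(\mathcal{F}_0)$ is legitimate but costs you an extra verification that the paper gets for free, namely that the isotropy algebra is exactly $\langle S\rangle$ for $d>1$ (and exactly $\langle S,X\rangle$ for $d=1$ -- you assert the latter without excluding a larger stabilizer there either). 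The two steps you defer are precisely where the content lies: (i) the GK verification, which you should reduce from ``all points at infinity'' to the fixed points of $S$ using Proposition \ref{prop-qh-vf}(b) as in Lemma \ref{reduction}, rather than attempting to control $\sing(d\omega)$ along the whole hyperplane at infinity; and (ii) the exclusion of extra linear symmetries for $d>1$, where a cleaner argument than brute-force centralizer equations is the paper's observation (proof of Proposition \ref{symmetry}) that for $d\geq 2$ the foliation $\mathcal{G}_S$ is the unique degree-one foliation by curves tangent to $\mathcal{F}_0$, which forces any stabilizing automorphism to lie in the diagonal or antidiagonal subgroup, whence the identity component of the stabilizer is the one-parameter group generated by $S$. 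Both deferred verifications do succeed, so there is no step that would fail, but as written the proposal postpones them rather than carrying them out.
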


Note that these families extend the so-called exceptional component ($d=1$), that appears originally in \cite{4}, and they consist of the only general families of GK components provided by Theorem \ref{thefour} that are known so far. 

Even when $p_1\geq p_2 \geq \cdots \geq p_n$, the construction of $\mathcal{F}(\mathfrak{P},\lambda,d+1)$ makes sense. Recently, the latter case was treated in \cite{11}. Our first result extends Theorem \ref{thefour} to higher dimensional projective spaces. Thinking $S$ as defined in an affine chart $(E \cong \mathbb{C}^n,(x_1,\ldots,x_n))$, denote by $q_0 \in \mathbb{P}^n$ the point corresponding to $0 \in E$.

\begin{thmA}\label{a}
Let $n,d,\lambda$ be integers, with $n \geq 3$ and $d \geq 1$. If $\lambda>0$ and $\mathcal{F}(\mathfrak{P},\lambda,d+1)$ contains some WGK foliation $\mathcal{F}$, where $q_0$ is a GK singularity of $\mathcal{F}$, then $\overline{\mathcal{F}(\mathfrak{P},\lambda,d+1)}$ is an irreducible component of $\mathscr{F}_2(d+1,n)$. In particular, if $\mathcal{F}(\mathfrak{P},\lambda,d+1)$ contains some GK foliation, where $\lambda \neq 0$, then $\overline{\mathcal{F}(\mathfrak{P},\lambda,d+1)}$ is an irreducible component of $\mathscr{F}_2(d+1,n)$.
\end{thmA}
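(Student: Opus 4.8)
The plan is to show that $\overline{\mathcal{F}(\mathfrak{P},\lambda,d+1)}$ contains an analytic neighborhood of one of its points; since this set is irreducible and closed by Proposition \ref{irreducibility}, that suffices to conclude it is an irreducible component. Concretely, I would fix $\mathcal{F}_0\in\mathcal{F}(\mathfrak{P},\lambda,d+1)$ to be the hypothesized WGK foliation having $q_0$ as a GK singularity, take an arbitrary holomorphic deformation $\{\mathcal{F}_t\}\subset\mathscr{F}_2(d+1,n)$ with central fibre $\mathcal{F}_0$, and prove $\mathcal{F}_t\in\overline{\mathcal{F}(\mathfrak{P},\lambda,d+1)}$ for all small $t$. (Infinitesimally this amounts to showing $\dim T_{\mathcal{F}_0}\mathscr{F}_2(d+1,n)=\dim\overline{\mathcal{F}(\mathfrak{P},\lambda,d+1)}$, but the deformation formulation is more transparent.) This mirrors, in arbitrary dimension, the three-dimensional argument of Theorem \ref{thefour}, which I would use as a template.

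The first step is openness. Because $\sing(d\omega)$ varies holomorphically with the coefficients of $\omega$ and the codimension of an analytic set is upper semicontinuous under deformation, both the condition $\cod(\sing(d\omega))\ge 3$ and the stronger condition $\cod(\sing(d\omega))\ge n$ near $q_0$ persist for small $t$; hence $\mathcal{F}_t$ stays WGK and carries a GK singularity $q_0(t)$, holomorphic in $t$ with $q_0(0)=q_0$. The structural input is that $\mathcal{F}_0$ is quasi-homogeneous: since $[S,S]=0$ and $[S,X]=\lambda X$ lies in $\langle S,X\rangle$, the field $S$ is tangent to $\mathcal{F}_0$ and $\mathrm{ad}_S$ preserves $\langle S,X\rangle$, so the $\mathbb{C}^\ast$-action generated by $S$, with weights $p_1>\cdots>p_n\ge 1$, leaves $\mathcal{F}_0$ invariant. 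I would then exploit the GK condition at $q_0$ together with the distinctness of the $p_i$ to obtain local rigidity: the germ of $\mathcal{F}_t$ at $q_0(t)$ is generated by a semisimple field $S_t$, with eigenvalues close to $p_1,\dots,p_n$, and a second generator $X_t$ obeying a nearby relation $[S_t,X_t]=\lambda_t X_t$. Distinctness of the $p_i$ forces one-dimensional eigenspaces and a canonical semisimple part, so $S_t$ depends holomorphically on $t$.

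The core of the proof is globalization: promoting the local field $S_t$ at $q_0(t)$ to a global $\mathbb{C}^\ast$-action on $\mathbb{P}^n$ preserving $\mathcal{F}_t$. Here positivity of the weights (equivalently $\lambda>0$) is decisive, since it makes $q_0$ a repelling fixed point with Zariski-dense basin, so that orbits accumulate at $q_0(t)$ and the local normal-form data propagate along the leaves to all of $\mathbb{P}^n$; the WGK condition, by keeping $\cod(\sing(d\omega))\ge 3$, lets the relevant holomorphic objects extend across the singular set by Hartogs-type arguments. The existence of a global algebraic $\mathbb{C}^\ast$-action forces the weights to be commensurable, hence, after normalization, coprime integers; being close to $\mathfrak{P}$ they must equal $\mathfrak{P}$, and likewise $\lambda_t=\lambda$. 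This realizes $\mathcal{F}_t$ as $\overline{\mathcal{F}}(\tilde S,\tilde X)$ for polynomial $\tilde S,\tilde X$ with $[\tilde S,\tilde X]=\lambda\tilde X$ and weights $\mathfrak{P}$, with Lemma \ref{lemmadeg} pinning down the degree; thus $\mathcal{F}_t\in\mathcal{F}(\mathfrak{P},\lambda,d+1)$ and the first assertion follows. I expect this globalization — forcing global $\mathbb{C}^\ast$-invariance from the single GK point $q_0$ using only the weaker WGK hypothesis elsewhere — to be the main obstacle.

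For the ``in particular'' statement, observe that $q_0$ is always a singularity of $\overline{\mathcal{F}}(S,X)$, because $S(q_0)=0$ gives $\omega(q_0)=0$; so if $\mathcal{F}$ is GK then $q_0$ is a GK singularity, and since GK implies WGK the case $\lambda>0$ is immediate from the first part. For $\lambda<0$ I would pass to the fixed point of the $S$-action antipodal to $q_0$: in an affine chart centered there the generating semisimple field has weights of the opposite sign, which after reordering and rescaling exhibits the same foliation as an element of some $\mathcal{F}(\mathfrak{P}',\lambda',d+1)$ with $\lambda'>0$. As the GK property is intrinsic, hence chart-independent, the hypotheses of the first part hold in the new chart, and $\overline{\mathcal{F}(\mathfrak{P},\lambda,d+1)}=\overline{\mathcal{F}(\mathfrak{P}',\lambda',d+1)}$ is an irreducible component.
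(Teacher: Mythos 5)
Your overall skeleton --- deform $\mathcal{F}_0$, show the WGK condition persists by a compactness argument, analyze the singularity at $q_0$, then globalize along the lines of the three-dimensional Theorem \ref{thefour} --- matches the paper's, and your treatment of the ``in particular'' clause (for $\lambda<0$ pass to the chart centered at $q_1$, where the weights become $\overline{p}_1>\cdots>\overline{p}_n$ and the new weight is $\lambda_1=p_1(d-1)-\lambda>0$) is exactly the intended reduction via Proposition \ref{charts}. But there is a genuine gap at the step you yourself flag as ``the main obstacle'', and your proposed mechanism for the local rigidity at $q_0$ is not the one that works. The paper's argument hinges on the technical notion of quasi-homogeneous singularity (Definition \ref{dsqh}): one must show that $Y=\rot(\omega)$ has an \emph{isolated} zero at $q_0$ with $DY(q_0)$ \emph{nilpotent}. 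This is Proposition \ref{prop-sing}(a), and it is precisely where $\lambda>0$ and the GK hypothesis at $q_0$ enter: $\lambda>0$ forces $Y(q_0)=0$ with an isolated zero by Remark \ref{remark 1}, while $[S,DY(q_0)]=\lambda\, DY(q_0)$ and Lemma \ref{nilpotent.lemma} give nilpotency. You instead use ``quasi-homogeneous'' in the loose sense of $\mathbb{C}^{*}$-invariance of $\mathcal{F}_0$ and invoke $\lambda>0$ only to make $q_0$ a repelling fixed point; that is not where the hypothesis actually bites.

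Once quasi-homogeneity in the sense of Definition \ref{dsqh} is established, the paper applies Theorem \ref{thm-est-sqh}: the deformed foliation has a unique nearby quasi-homogeneous singularity of the \emph{same type} $(p_1,\ldots,p_n;\lambda)$ --- the weights do not deform at all. Your substitute (eigenvalues of $S_t$ merely ``close to'' $p_1,\ldots,p_n$, pinned down a posteriori because a global algebraic $\mathbb{C}^{*}$-action forces commensurability) does not close up: what propagation along leaves would produce is a global holomorphic vector field on $\mathbb{P}^n$ tangent to $\mathcal{F}_t$, and such a field need not generate a $\mathbb{C}^{*}$-action nor have rational eigenvalue ratios, so nothing in your outline forces the perturbed weights back to $\mathfrak{P}$; the rigidity has to come first, from Theorem \ref{thm-est-sqh}, not from the globalization. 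Finally, the globalization itself is asserted rather than proved; the paper discharges it by reducing to the proof of Theorem \ref{thefour} in \cite{1}, which requires the three observations listed there: at a WGK singularity the tangent sheaf is locally free with two generators (via Proposition 1 of \cite{15}), the splitting $\mathcal{T}\mathcal{F}=\mathcal{O}\oplus\mathcal{O}(1-d)$ of Proposition \ref{splits}, and Theorem \ref{thm-est-sqh} playing the role of Proposition 1 of \cite{1}. None of these ingredients appear in your outline, so the core of the proof is still missing.
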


\begin{remark}\label{othertype}
It is worth pointing out that there are irreducible components of $\mathscr{F}_2(d+1,n)$ given by Theorem \hyperref[a]{A} that are not GK, as $\overline{\mathcal{F}(10,8,6,1;2,3)} \subset \mathscr{F}_2(3,4)$.
\end{remark}

The proof of Theorem \hyperref[a]{A} has much in common with the proof of Theorem \ref{thefour}. The main difference is related to recent results on quasi-homogeneous singularities, previously restricted to the case of dimension 3.

Next we give a description of the components $\overline{\mathcal{F}(\mathfrak{P},\lambda,d+1)} \subset \mathscr{F}_2(d+1,n)$ provided by the second part of Theorem \hyperref[a]{A}. Loosely speaking, $\mathcal{F}(\mathfrak{P},\lambda,d+1)$ contains a GK foliation if and only if $q_0$ is a GK singularity of some $\mathcal{F} \in \mathcal{F}(\mathfrak{P},\lambda,d+1)$ and $p_1,\ldots,p_n,\lambda,d$ satisfy certain arithmetic relations.

Throughout the text, several parameters will appear, including in the next theorem. We seize the opportunity to define most of them now. Given $p_1,\ldots,p_n,\lambda,d$, by convention set $p_{n+1}=0$. Define $\lambda_1,\ldots,\lambda_n$, $\tau,\tau_1,\ldots,\tau_n$, $\overline{p}_1,\ldots,\overline{p}_n$ as follows

\begin{equation}\label{mainpar}
\begin{cases}
\lambda_1=p_1(d-1)-\lambda, \lambda_i=\lambda-p_i(d-1),i=2,\ldots,n,\\
\tau=\lambda+\sum_{k=1}^{n} p_k, \\
\tau_1=p_1(n+d)-\tau,\tau_i=\tau-p_i(n+d), i=2,\ldots,n, \\
\overline{p}_j=p_1-p_{n-j+2},j = 1,\dots,n.
\end{cases}
\end{equation} 

For $i=1,\ldots,n-1,j=1,\ldots,n$ denote by $c_{ij}$ the following condition
\begin{equation*}
c_{ij}:
\begin{cases}
p_j +\lambda= p_{i+1}d, \text{ if } j \leq i\\
p_{j+1}+\lambda=p_{i+1}d, \text{ if } j >i.
\end{cases}
\end{equation*}

Before stating the next result, it is worth mentioning that $\mathcal{F}(\mathfrak{P},\lambda,d+1)=\mathcal{F}(\overline{\mathfrak{P}},\lambda_1,d+1)$, where $\overline{\mathfrak{P}}=(\overline{p}_1,\ldots,\overline{p}_n)$ (see Proposition \ref{symmetry}). 

\begin{thmB}\label{b}
Let $l_1>\cdots>l_n$ be relatively prime positive integers, $\mu \in \mathbb{Z}$ and $d \geq 1$. Then $\overline{\mathcal{F}(\mathfrak{L},\mu,d+1)}$ is a GK component of $\mathscr{F}_2(d+1,n)$ if and only if it can be written in the form $\overline{\mathcal{F}(\mathfrak{L},\mu,d+1)}=\overline{\mathcal{F}(\mathfrak{P},\lambda,d+1)}$, such that $p_1>\cdots>p_n$ are relatively prime positive integers, $\lambda \in \mathbb{Z}_{>0}$, satisfying
\begin{enumerate}[a)]
\item 
$q_0$ is a GK singularity of some $\mathcal{F} \in \mathcal{F}(\mathfrak{P},\lambda,d+1)$
\end{enumerate}
and $p_1,\ldots,p_n,\lambda,d$ satisfy either 
\begin{enumerate}
\item [b.1)]
\begin{itemize}
\item
$c_{11},c_{22},\ldots,c_{ii},c_{i+1,i+2},c_{i+2,i+3},\ldots,c_{n-1,n}$, for some $0 \leq i \leq \lfloor \frac{n-1}{2} \rfloor$
\item 
$\tau_j \neq 0, j=2,3,\ldots,n$
\end{itemize}
\end{enumerate}
or
\begin{enumerate}
\item [b.2)]
\begin{itemize}
\item
$c_{11},c_{22},\ldots,c_{i-2,i-2},\lambda=p_i (d-1),c_{i,i+1},c_{i+1,i+2},\ldots,c_{n-1,n}$, for some $2 \leq i \leq \lfloor \frac{n+2}{2} \rfloor$
\item 
$\tau_j \neq 0, j \in \{2,3,\ldots,n\} \setminus \{i\}$
\end{itemize}
\end{enumerate}

Moreover, if $\mathfrak{L}=\mathfrak{P}$ and $\mu=\lambda$ do not hold, certainly $\mathfrak{L}=\overline{\mathfrak{P}}$ and $\mu=\lambda_1$.
\end{thmB}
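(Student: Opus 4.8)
The plan is to prove the equivalence in both directions, using Theorem~\hyperref[a]{A} as the bridge: being a GK component means being an irreducible component that contains a GK foliation, and Theorem~\hyperref[a]{A} already converts ``contains a GK foliation (with $q_0$ GK)'' into ``is an irreducible component'', so the whole content of Theorem~\hyperref[b]{B} is to translate the existence of a GK member into the arithmetic constraints. First I would fix the local model. Writing $X=\sum_{j=1}^n X_j\,\partial_{x_j}$, the relation $[S,X]=\lambda X$ forces each $X_j$ to be quasi-homogeneous of weighted degree $\lambda+p_j$ with respect to the weights $(p_1,\dots,p_n)$, and by Lemma~\ref{lemmadeg} we may take its monomials of ordinary degree $d$. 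The monomials that govern the structure at the coordinate vertices are the pure powers $x_m^d$, and $x_m^d$ occurs in $X_j$ exactly when $p_md=\lambda+p_j$. This is precisely what $c_{ij}$ records: reading $c_{ij}$ as ``$x_{i+1}^d$ sits in the admissible component of $X$'', the case split $j\le i$ versus $j>i$ is the reindexing that forbids the resonant placement of $x_{i+1}^d$ in $X_{i+1}$ itself. The resonance $p_{i+1}=p_{i+1}d-\lambda$, i.e.\ $\lambda=p_i(d-1)$ (equivalently $\lambda_i=0$), is the degenerate branch producing case b.2.

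Next I would carry out the singularity analysis. The fixed points of $S$ on $\mathbb{P}^n$ are $q_0$ together with the vertices $v_1,\dots,v_n$ at infinity. At $q_0$ the GK hypothesis is imposed directly as condition (a); following the quasi-homogeneous-singularity techniques used in the proof of Theorem~\hyperref[a]{A}, I would show that $q_0$ being GK together with the normalization $\lambda>0$ forces $X$ to contain, for each $i$, the pure power $x_{i+1}^d$ in exactly one admissible component, which is what the chain of $c_{ij}$'s encodes. At each vertex $v_j$ I would pass to the chart $x_j=1$, compute the linear part of the transformed $d\omega$, and identify its nonvanishing with $\tau_j\neq 0$, the quantity $\tau_j=\tau-p_j(n+d)$ (resp.\ $\tau_1=p_1(n+d)-\tau$) being the relevant index at $v_j$. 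Hence GK-ness at every $v_j$ is equivalent to $\tau_j\neq0$, except at the resonant vertex of case b.2, which must be treated separately and where the condition $\tau_i\neq0$ is dropped. Combining these, the family contains a GK foliation with $q_0$ GK if and only if (a) holds together with the $c_{ij}$'s and the relevant $\tau_j\neq0$; Theorem~\hyperref[a]{A} then upgrades ``contains a GK foliation'' to ``is a GK component'', yielding both implications.

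Then I would organize the bookkeeping. The $n-1$ pure powers $x_2^d,\dots,x_n^d$ must each be assigned to an admissible component of $X$, and compatibility of these assignments with $p_1>\cdots>p_n$ and $\lambda>0$ leaves exactly the two patterns b.1 and b.2, parametrized by the position $i$ at which the diagonal placement $c_{kk}$ switches to the shifted placement $c_{k,k+1}$ (or is interrupted by the resonance $\lambda=p_i(d-1)$). The ranges $0\le i\le\lfloor\frac{n-1}{2}\rfloor$ and $2\le i\le\lfloor\frac{n+2}{2}\rfloor$ are precisely the values for which the resulting linear system in $p_1,\dots,p_n,\lambda$ admits a solution satisfying the strict ordering and $\lambda>0$; values outside these bounds are either inconsistent or are carried into the admissible range by the symmetry $\mathfrak{P}\mapsto\overline{\mathfrak{P}}$, $\lambda\mapsto\lambda_1$. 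Finally, the ``moreover'' clause follows from Proposition~\ref{symmetry}: since $\mathcal{F}(\mathfrak{P},\lambda,d+1)=\mathcal{F}(\overline{\mathfrak{P}},\lambda_1,d+1)$, the only normalized presentations of a given component are $(\mathfrak{P},\lambda)$ and $(\overline{\mathfrak{P}},\lambda_1)$, while irreducibility of the closures is guaranteed by Proposition~\ref{irreducibility}.

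The main obstacle I expect is the exact singularity analysis at the vertices $v_j$: showing that GK-ness there is controlled by the single scalar $\tau_j$ requires computing a local normal form for $d\omega$ in the chart $x_j=1$ and proving that $\cod(\sing(d\omega))\ge n$ reduces to the nonvanishing of that scalar, while correctly isolating the resonant vertex that forces case b.2 and the dropping of $\tau_i\neq0$. The remaining steps—the quasi-homogeneity of $X$, the translation of the $c_{ij}$ into pure-power placements, the enumeration of the two patterns, and the symmetry reduction—are comparatively mechanical once this vertex computation is in place.
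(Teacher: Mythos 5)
Your global architecture matches the paper's: Theorem \hyperref[a]{A} as the bridge, analysis at the fixed points of $S$, and Proposition \ref{symmetry} for the ``moreover'' clause. But two of your key identifications are wrong, and they are not cosmetic. First, the conditions $c_{ij}$ are \emph{not} forced by the GK-ness of $q_0$. What the isolated singularity of $Y$ at $q_0$ forces is only that, for each coordinate axis, some monomial $x_k^{m}\,\partial/\partial x_j$ with $m\,p_k=p_j+\lambda$ occurs — the exponent $m$ need not equal $d$ (compare Remark \ref{division}, and the example $\overline{\mathcal{F}(4,2,1;3,3)}$, where the axis through $q_0$ is cleared by $x\,\partial/\partial z$ with $m=1\neq d=2$). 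The $c_{ij}$'s, with the specific exponent $d$, come instead from the Kupka condition $Y_i(q_i)\neq 0$ at the vertices $q_2,\dots,q_n$ at infinity: a nonzero $j$-th entry of $Y_i(q_i)$ is a constant term of $Y_i$ in the chart $E_i$, and Proposition \ref{closerlook}~(c) applied to $[S_i,Y_i]=\lambda_i Y_i$ turns that into exactly $c_{i-1,j}$. Second, GK-ness at a vertex $q_j$ is not ``equivalent to $\tau_j\neq 0$'': the implication only goes one way ($Y_j(q_j)\neq 0$ forces $\tau_j\neq 0$ because $\tau_j\omega_j=i_{S_j}i_{Y_j}(\nu_n)$ would otherwise make $Y_j$ proportional to $S_j$), and the $c$'s and the $\tau_j\neq0$ are two separate necessary consequences of the same Kupka condition, neither of which suffices alone. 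As written, your derivation of the arithmetic conditions would not go through.

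The sufficiency direction is also essentially missing. Knowing that the $c_{ij}$'s and $\tau_j\neq0$ are arithmetically consistent does not produce a GK foliation; you need (i) a reduction showing that GK-ness at the finitely many points $q_0,q_2,\dots,q_n$ already implies GK-ness along the whole (positive-dimensional) singular set of $\mathcal{F}$ — the paper's Lemma \ref{reduction}, proved by the orbit-accumulation argument from Proposition \ref{prop-qh-vf}~(b) — and (ii) an existence/genericity argument exhibiting $Y\in W_0$ for which $q_0$ is an isolated singularity, $Y_j(q_j)\neq0$ at the Kupka vertices, and $\det DY_i(q_i)\neq 0$ at the resonant vertex of case (b.2). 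The paper does (ii) by showing that the loci $\Gamma,\Sigma,L_i,H_j$ are proper algebraic subsets of $V_0$, via explicit vector fields and perturbations; nothing in your proposal addresses either step, and the resonant vertex in particular requires Proposition \ref{prop-sing}~(b) (a non-Kupka GK vertex forces $\lambda=p_i(d-1)$) together with the separate nondegeneracy of $DY_i(q_i)$, not just ``dropping $\tau_i\neq0$''.
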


For each $d \geq 1$, we have at least one irreducible component of $\mathscr{F}_2(d+1,n)$ described by Theorem \hyperref[b]{B}. As we will see in Corollary \ref{gen-klein-lie}, for $$\mathfrak{L}=(d^{n-1}+\cdots+1,d^{n-2}+\cdots+1,\ldots,d+1,1),$$ $\overline{\mathcal{F}(\mathfrak{L},-1,d+1)}$ is an irreducible component of $\mathscr{F}_2(d+1,n)$, extending the irreducible components of Corollary \ref{corklein}. Moreover, when $d=1$ it is the only GK irreducible component $\overline{\mathcal{F}(\mathfrak{P},\lambda,2)} \subset \mathscr{F}_2(2,n)$. This is the reason we sometimes focus on the case $d \geq 2$. We point out that for $d=1$ this irreducible component was established in \cite{5}. For $d \geq 2$ it is new.

For the cases $n=3$ and $n=4$, we can exhibit the GK components in a more explicit way, as follows.

\begin{thmB1}\label{thmB1}
Let $p>q>r$ be relative prime positive integers. \\$\overline{\mathcal{F}(p,q,r;\lambda,d+1)}$ is a GK component of $\mathscr{F}_2(d+1,3)$, $d \geq 2$, if and only if either $p,q,r,\lambda,d$ or $\overline{p},\overline{q},\overline{r},\lambda_1,d$ satisfy one of the following relations
\begin{enumerate}[(a)]
\item
$p>q=m(d+1)>r=md,\lambda=md^2,\gcd(p,m)=1$, $p$ divides either $d^2$ or $d^2+d+1$;
\item
$p=d>q=r+1>r,\lambda=dr$;
\item
$p=kd>q=md+k>r=md,\lambda=md^2,\gcd(k,m)=1,k \text{ divides } d+1$;
\item
$p>q=md>r=m(d-1),\lambda=m(d^2-d),\gcd(p,m)=1$, $p$ divides either $d^2-d$, or $d^2$, or $d^2-1$.
\end{enumerate}
\end{thmB1}

We make some comments about Theorem \hyperref[thmB1]{B.1}. It provides a classification of the irreducible components given by Theorem \ref{thefour} in terms of the degree of foliations. In fact, for each $d \geq 2$, we can find (in algorithmic fashion) all the GK components $\overline{\mathcal{F}(p,q,r;\lambda,d+1)}$ of $\mathscr{F}_2(d+1,3)$. For example, we do so in Corollary \ref{corsomecomponents} for the cases $d=2$ and $d=3$, obtaining irreducible components of $\mathscr{F}_2(3,3)$ and $\mathscr{F}_2(4,3)$ which had been unknown until then. Corollary \ref{answerproblem} gives a negative answer to Problem 1 of \cite{1}, which asks whether, given three positive integers $p>q>r\geq1$, we can find $(\lambda,d)$ such that $\mathcal{F}(p,q,r;\lambda,d+1)$ is a GK family. Finally, we describe in Corollary \ref{corfamilies} new families of irreducible components like those of Corollary \ref{corklein}.

For the case $n=4$, we have an equivalent result. 

\begin{thmB2}\label{thmB2}
Let $p>q>r>s$ be relatively prime positive integers. \\ $\overline{\mathcal{F}(p,q,r,s;\lambda,d+1)}$ is a GK component of $\mathscr{F}_2(d+1,4)$, $d \geq 2$, if and only if either $p,q,r,s,\lambda,d$ or $\overline{p},\overline{q},\overline{r},\overline{s},\lambda_1,d$ satisfy one of the following relations
\begin{enumerate}[(a)]
\item
$p>q=m(d^2+d+1)>r=m(d^2+d)>s=md^2,\lambda=md^3,\gcd(p,m)=1$, $p$ divides either $d^3$ or $d^3+d^2+d+1$;
\item
$p=kd>q=md+k>r=m(d+1)>s=md,\lambda=md^2,\gcd(k,m)=1$, either $k$ divides $d$, or $kd$ divides $m(d^2+d)+k$ (which implies $k=jd$ where $j$ divides $d+1$), or $d$ divides $m$ and $k$ divides $d^2+d+1$, or $k$ divides $d+1$ and $\gcd(\frac{m(d+1)}{k},d)=1$;
\item
$p>q=md^2>r=m(d^2-1)>s=m(d^2-d),\lambda=m(d^3-d^2),\gcd(p,m)=1$, $p$ divides either $d^3-d^2$, or $d^3$, or $d^3 -1$;
\item
$p=kd>q=m(d-1)+k>r=md>s=m(d-1),\lambda=m(d^2-d),\gcd(k,m)=1$, either $k$ divides $d-1$, or $k$ divides $d$, or $d$ divides $m$ and $k$ divides $d^2-1$.
\end{enumerate}
\end{thmB2}

The same comments on Theorem \hyperref[thmB1]{B.1} apply to Theorem \hyperref[thmB2]{B.2}. We exhibit in Corollary \ref{cod2comp}, for example, new irreducible components of $\mathscr{F}_2(3,4)$. 

The paper is organized as follows. In Section 2 we list some basics properties of the foliations in $\mathcal{F}(\mathfrak{P},\lambda,d+1)$, which will be used throughout. For the sake of completeness, we determine the tangent sheaf of their foliations and the dimension of these subvarieties as well. In section 3 we recall basic facts concerning the stability of quasi-homogeneous singularities, settling a key result to obtain Theorem \hyperref[b]{B}. Theorem \hyperref[a]{A} is also proved in this section. Finally, Section 4 is dedicated to the proofs of Theorems \hyperref[b]{B}, \hyperref[thmB1]{B.1}, \hyperref[thmB2]{B.2} and some consequences.

\section{Preliminaries}
Throughout this paper, given a polynomial vector field $Z$ on $\mathbb{C}^n$, we denote by $Z=\hat{Z}_0+\hat{Z}_1+\cdots+\hat{Z}_k,\deg(\hat{Z}_i)=i$, $i=0,\ldots,k$, its decomposition into homogeneous polynomial vector fields. In parallel, we write $\omega=\hat{\omega}_0+\cdots+\hat{\omega}_k$ for a polynomial ($n-2$)-form $\omega$ on $\mathbb{C}^n$.

\subsection{Quasi-homogeneous vector fields}
Consider the diagonal vector field $$S=p_1 x_1 \px +p_2 x_2 \py + \cdots +p_n x_n \pz,$$ where $p_1 \geq p_2 \geq \cdots \geq p_n$ are integers (not necessarily positive). 

The next result is an adapted version of Proposition 4.2.1 of \cite{8}. The proof of the original proposition still holds.

\begin{prop}\label{prop-qh-vf}
Let $X \neq 0$ be a holomorphic vector field on $\mathbb{C}^n$, where $[S,X]=\lambda.X$. Then
\begin{enumerate}[(a)]
\item
$\lambda \in \mathbb{Z}$.
\item
$\ld(S,X)=\{z \in \mathbb{C}^n \mid S(z) \text{ and } X(z) \text{ are linearly dependent} \}$ is a union of orbits of the action induced by the vector field $S$.
\end{enumerate}

Additionally, if $p_n \geq 1$ then
\begin{enumerate}[(a)]
\setcounter{enumi}{2}
\item
$\lambda \geq -p_1$ and $X$ is a polynomial vector field.
\item
If $0 \in \mathbb{C}^n$ is an isolated singularity of $X$, then the Milnor number of $X$ at $0$ is given by
\begin{center}
$m(X,0)=\frac{\prod_{j=1}^{n}(p_j+\lambda)}{\prod_{j=1}^{n}p_j}$.
\end{center}
\end{enumerate}
\end{prop}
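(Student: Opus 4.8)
The plan is to reduce everything to the eigenvalue equation satisfied componentwise by $X$. Writing $X = \sum_{i=1}^{n} X_i \frac{\partial}{\partial x_i}$, a direct computation of the bracket gives $[S,X] = \sum_{i} \bigl(S(X_i) - p_i X_i\bigr)\frac{\partial}{\partial x_i}$, so the hypothesis $[S,X]=\lambda X$ is equivalent to the system $S(X_i)=(p_i+\lambda)X_i$ for $i=1,\ldots,n$. Thus each component $X_i$ is an eigenfunction of $S$ of eigenvalue $p_i+\lambda$; expanding $X_i$ in monomials $x^\alpha$ and using $S(x^\alpha)=\bigl(\sum_j p_j\alpha_j\bigr)x^\alpha$, we see that $X_i$ involves only monomials of weighted degree $\sum_j p_j\alpha_j = p_i+\lambda$. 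For (a), since $X\neq 0$ at least one such monomial occurs, and as $\sum_j p_j\alpha_j$ and $p_i$ are integers, $\lambda=\sum_j p_j\alpha_j-p_i\in\mathbb{Z}$.

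For (b) I would use the flow $\phi_t(x)=(e^{p_1 t}x_1,\ldots,e^{p_n t}x_n)$ of $S$. The quasi-homogeneity just established yields $X_i(\phi_t(x))=e^{(p_i+\lambda)t}X_i(x)$, while the $i$-th component of $S$ satisfies $p_i x_i\mapsto p_i e^{p_i t}x_i$ along the orbit. A point $z$ lies in $\ld(S,X)$ exactly when all $2\times 2$ minors $p_i z_i X_j(z)-p_j z_j X_i(z)$ vanish. Evaluating the same minor at $\phi_t(z)$ produces the factor $e^{(p_i+p_j+\lambda)t}$ times its value at $z$; hence the whole minor locus, and therefore $\ld(S,X)$, is invariant under $\phi_t$, i.e.\ a union of $S$-orbits. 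For (c), assume $p_n\geq 1$, so every $p_j\geq 1$. Then any nonconstant monomial has weighted degree $\sum_j p_j\alpha_j\geq 1$, and the weighted degree vanishes only on constants. Consequently $X_i\neq 0$ forces $p_i+\lambda\geq 0$, and for each fixed value $p_i+\lambda$ there are only finitely many monomials of that weighted degree (as $|\alpha|\leq p_i+\lambda$), so each $X_i$ is a polynomial and $X$ is a polynomial vector field. Since $X\neq 0$, some $X_i\neq 0$, whence $\lambda\geq -p_i\geq -p_1$.

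Part (d) is where the real work lies. Here $X_1,\ldots,X_n$ are weighted homogeneous polynomials of degrees $d_i:=p_i+\lambda$ for the weights $p_1,\ldots,p_n$, with $0$ an isolated common zero. Because each $X_i$ is weighted homogeneous, its zero set is invariant under the weighted $\mathbb{C}^*$-action, so the common zero locus $V(X_1,\ldots,X_n)$ is a cone; an isolated zero at $0$ therefore means $V(X_1,\ldots,X_n)=\{0\}$ globally. Thus $X_1,\ldots,X_n$ are $n$ polynomials cutting out a zero-dimensional scheme in $\mathbb{C}^n$, hence form a regular sequence, and the graded Artinian complete intersection $A=\mathcal{O}_{\mathbb{C}^n,0}/(X_1,\ldots,X_n)$ has Hilbert series $\prod_{i=1}^{n}(1-t^{d_i})\big/\prod_{j=1}^{n}(1-t^{p_j})$. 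Since $A$ is finite-dimensional, its dimension is the value of this series at $t=1$; using $\lim_{t\to 1}(1-t^{d})/(1-t^{p})=d/p$ we obtain $m(X,0)=\dim_{\mathbb{C}}A=\prod_{i=1}^{n}d_i\big/\prod_{j=1}^{n}p_j=\prod_{j=1}^{n}(p_j+\lambda)\big/\prod_{j=1}^{n}p_j$.

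The main obstacle is precisely this last step: justifying the complete intersection and regular-sequence property, and the Hilbert series computation at $t=1$ (equivalently, invoking the classical Milnor number formula for weighted homogeneous maps). Parts (a)--(c) are then routine consequences of the componentwise eigenvalue equation $S(X_i)=(p_i+\lambda)X_i$ together with the diagonal action of $S$ on monomials.
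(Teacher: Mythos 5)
Your proof is correct. The paper itself offers no argument for this proposition---it states only that it is an adapted version of Proposition 4.2.1 of \cite{8} and that the original proof still holds---so there is nothing to compare line by line; your componentwise eigenvalue equation $S(X_i)=(p_i+\lambda)X_i$, the monomial-weight analysis for (a) and (c), the flow-equivariance of the $2\times 2$ minors for (b), and the regular-sequence/Hilbert-series computation for (d) are exactly the standard argument that citation points to. The only detail worth making explicit in (d) is that each $d_i=p_i+\lambda\geq 1$: if some $X_i$ were constant, then either $X(0)\neq 0$ or $0$ would be a non-isolated singularity, so the hypothesis rules out $d_i\leq 0$ and the evaluation of the Hilbert series at $t=1$ is legitimate.
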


By Proposition \ref{prop-qh-vf} (a), there is no loss assuming that $p_1,\ldots,p_n$ are relatively prime in the definition of $\mathcal{F}(\mathfrak{P},\lambda,d+1)$. The relation $[S,X]=\lambda.X$ can be given in some equivalent ways, as follows.

\begin{prop}\label{closerlook}
Let $X=\sum_{j=1}^{n} X_j(z){\partial}/{\partial z_j}$ be a holomorphic vector field on $\mathbb{C}^n$. Then the following are equivalent
\begin{enumerate}[(a)]
\item
$\left[S,X\right]=\lambda.X$.
\item
$X_j \left( t^{p_1} . z_1, \dots, t^{p_n} . z_n \right)=t^{p_j+\lambda} . X_j(z_1,\dots,z_n), \forall 1 \leq j \leq n, \forall t \in \mathbb{C}$.
\item
Write $X_j=\sum_{j \sigma} a_{j \sigma} z^{\sigma},1\leq j\leq n$, where $a_{j \sigma} \in \mathbb{C}$ and for $\sigma=(\sigma_1,\ldots,\sigma_n)$, $z^{\sigma}=z_{1}^{\sigma_1}\cdots z_{n}^{\sigma_n}$. If 
$a_{j \sigma} \neq 0$, then $\sum_{k=1}^n p_k . \sigma_k = p_j + \lambda$.
\end{enumerate}
\end{prop}

For example, if $p_j=1, 1 \leq j \leq n$, then $S=R_n$ is the radial vector field on $\mathbb{C}^n$ and the equality $[S,X]=\lambda.X$ implies that $X$ is a homogeneous polynomial vector field of degree $\lambda+1$.

\begin{remark}\label{remark 1}
Let $X$ be a holomorphic vector field on $\mathbb{C}^n$, satisfying $[S,X]=\lambda.X$. Assume that $p_n \geq 1$. If $0 \in \mathbb{C}^n$ is an isolated singularity of $X$, then $\lambda \geq 0$. If $X(0) \neq 0$, then $\lambda<0$. This is an immediate consequence of Propositions \ref{prop-qh-vf} and \ref{closerlook} above.
\end{remark}

\subsection{Some facts about foliations in $\mathcal{F}(\mathfrak{P},\lambda,d+1)$}
Hereafter we assume that $$p_1 > p_2 > \dots > p_n \geq 1.$$

Let $\mathcal{F}$ be some foliation of $\mathcal{F}(\mathfrak{P},\lambda,d+1)$. By definition, $\mathcal{F}$ is given by the following ($n-2$)-form $$\omega=i_S i_X (\nu_n),[S,X]=\lambda.X,\nu_n=dx_1 \wedge dx_2 \wedge \cdots \wedge dx_n,$$ in some affine coordinate system $(E_0,(x_1,\ldots,x_n))$, that for now we assume $$E_0= \{(x_1:\cdots:x_n:1) \in \mathbb{P}^n|(x_1,\cdots,x_n) \in \mathbb{C}^n\}.$$

As $d\omega$ is a ($n-1$)-form, there exists a vector field $Y$ such that $d \omega=i_Y(\nu_n)$. The latter is called the rotational of $\omega$, and denoted by $Y=\rot(\omega)$. Using Cartan\textquotesingle s formulas, we get $$Y=\rot(\omega)=\tau. X-\dive(X). S,$$ where $\tau=\lambda+\sum_{i=1}^n p_i$ and writing $X=\sum_{i=1}^n X_i \partial / \partial x_i$, $\dive(X)=\sum_{i=1}^n \frac{\partial X_i}{\partial x_i}$.

By Proposition \ref{prop-qh-vf} (c), we see that $\tau>0$. Using the above expression for $Y$, one verifies that 
\begin{equation}\label{eqrot}
[S,Y]=\lambda.Y,\omega = \frac{1}{\tau} i_{S}i_{Y} (\nu_n),\dive(Y) =0. 
\end{equation}

\begin{lemma}\label{lemmadeg}
Let $\mathcal{F}$ be a foliation in $\mathcal{F}(\mathfrak{P},\lambda,d+1)$, given by $\omega=i_Si_X(\nu_n)$ on $E_0$. Then $X$ can be chosen in such a way that $\deg(\mathcal{G}_X)=d$, where $\mathcal{G}_X$ denotes the one dimensional foliation on $\mathbb{P}^n$ defined by $X$ on $E_0$.
\end{lemma}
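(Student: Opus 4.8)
The plan is to exploit the non-uniqueness of $X$ together with a degree count in homogeneous coordinates. First I would record the only freedom available: for any function $h$ one has $i_S i_{X+hS}(\nu_n)=i_S i_X(\nu_n)$, since $i_S i_S=0$, and $[S,X+hS]=\lambda(X+hS)$ precisely when $S(h)=\lambda h$. Thus, by Proposition~\ref{closerlook}, $X$ is determined by $\mathcal F$ only up to $X\mapsto X+hS$ with $h$ a polynomial whose monomials $x^\sigma$ all satisfy $\sum_k p_k\sigma_k=\lambda$; this admissible gauge preserves both $\omega$ and the eigen-relation $[S,X]=\lambda X$, and I will normalize $X$ with it.

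Next I would pass to $\mathbb{C}^{n+1}$, with $E_0=\{x_{n+1}=1\}$ and radial field $R_{n+1}$. Lift $S$ to the homogeneous degree-one field $\tilde S=\sum_{j=1}^n p_j x_j\,\partial/\partial x_j$ and $X$ to a homogeneous field $\tilde X$ of degree $\deg\mathcal G_X$ defining $\mathcal G_X$ modulo $R_{n+1}$. The $3$-plane distribution lifting $\mathcal F$ is then spanned by $R_{n+1},\tilde S,\tilde X$, so $\mathcal F$ is defined by $\Omega=i_{R_{n+1}} i_{\tilde S} i_{\tilde X}(\nu_{n+1})$, whose coefficients are the $3\times 3$ minors of the matrix with rows $R_{n+1},\tilde S,\tilde X$, hence homogeneous of degree $1+1+\deg\mathcal G_X$. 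Since $\mathcal F\in\mathscr F_2(d+1,n)$, its reduced defining form has coefficients of degree $d+2$; therefore, as soon as $\Omega$ has no common factor, the identity $\deg\mathcal F=\deg\mathcal G_X+1$ holds and gives $\deg\mathcal G_X=d$.

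It remains to choose the representative so that $\Omega$ is reduced. Because $\omega$ itself is reduced, the tangency locus $\ld(S,X)$ has codimension $\ge 2$ in $\mathbb{C}^n$, so no common factor of $\Omega$ can come from the affine chart; any common factor must be a power of $x_{n+1}$, that is, it must come from the hyperplane at infinity, where $\tilde X$ reduces to the leading homogeneous part $\hat X_m$ of $X$ (with $m=\deg X$). I would show that such a factor forces $\hat X_m$ to lie in the span of $S$ and $R_n$: the component along $S$ is removed by subtracting a suitable $\phi S$ with $\phi$ homogeneous of weighted degree $\lambda$ (an admissible gauge that strictly lowers $\deg X$), while a leading radial component $g R_n$ simply lowers $\deg\mathcal G_X$ by one through the standard affine-to-projective criterion. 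Iterating this finitely many times clears the factor $x_{n+1}$, making $\Omega$ reduced and yielding $\deg\mathcal G_X=d$.

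I expect the reducedness bookkeeping at infinity to be the main obstacle: one must verify that the only possible common factor of the minors is a power of $x_{n+1}$, and that each such factor is accounted for either by a removable multiple of $S$ in $\hat X_m$ or by a radial component that drops the degree, all while keeping $S$ and $X$ generically independent (so that $\omega\neq 0$) and preserving $[S,X]=\lambda X$. The constraint $S(h)=\lambda h$ on the admissible gauge is exactly what makes the step delicate, so checking that clearing the $S$-part of the leading term is compatible with this weight condition is the crux of the argument.
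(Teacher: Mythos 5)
Your proposal is correct in its essential mechanism, but it is organized quite differently from the paper's proof, and the two points you flag as delicate are exactly the ones the paper's normalization is built to avoid. The paper never passes to homogeneous coordinates or analyzes common factors of a lift $\Omega$ at infinity: it works in the affine chart with the rotational $Y=\rot(\omega)=\tau X-\dive(X)S$. Since $\deg(\mathcal F)=d+1$ forces $\omega=\hat\omega_0+\cdots+\hat\omega_{d+2}$ with $i_{R_n}(\hat\omega_{d+2})=0$, the field $Y$ automatically has degree at most $d+1$ and its top part satisfies $i_{R_n}i_{S}i_{\hat Y_{d+1}}(\nu_n)=0$; a single division argument against $\sing(i_Si_{R_n}(\nu_n))$, which has codimension at least two, gives $\hat Y_{d+1}=f_dS+g_dR_n$, and $\overline X=(Y-f_dS)/\tau$ finishes the proof in one step, with no iteration. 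Your route does go through: the crux you isolate is fine because the decomposition $\hat X_m=\phi S+gR_n$ is unique (if $\phi S+gR_n=0$ then $\phi p_j+g=0$ for every $j$, and the $p_j$ are distinct, so $\phi=g=0$), hence $[S,\hat X_m]=\lambda\hat X_m$ together with $[S,R_n]=0$ yields $S(\phi)=\lambda\phi$ and $S(g)=\lambda g$, so the correction is an admissible gauge; and the iteration terminates since each step lowers either $\deg X$ or $\deg\mathcal G_X$ and $X$ can never be gauged to a multiple of $S$ (else $\omega=0$). What the paper's choice buys is that $\rot(\omega)$ is already a canonical representative of bounded degree, collapsing your whole reduction to the correction of a single top term; what your version buys is a self-contained explanation, via the reducedness of $\Omega$, of \emph{why} the answer must be $d$, at the cost of the bookkeeping at infinity (tracking how the reduced lift restricts to $\{x_{n+1}=0\}$ through possibly several radial reductions) that you correctly identify as the main obstacle and leave unexecuted.
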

\begin{proof}
As $\deg(\mathcal{F})=d+1$, we can write on $E_0$ 
\begin{align*}
\omega=i_{S}i_{X}(\nu_n)= \hat{\omega}_0+\hat{\omega}_1+\cdots+\hat{\omega}_{d+2}, i_{R_n}(\hat{\omega}_{d+2})=0.
\end{align*}

Therefore $Y=\rot(\omega)=\hat{Y}_0+\hat{Y}_1+\cdots+\hat{Y}_{d+1}
$. The relation $i_{R_n}(\hat{\omega}_{d+2})=0$ implies that $i_{R_n}i_{S}i_{\hat{Y}_{d+1}} (\nu_n)=0$. Since $\sing(i_Si_{R_n}(\nu_n))$ is a union of lines, in particular has codimension greater than two, it follows from the last equality and Hartog\textquotesingle s Theorem that there exist holomorphic functions $f$ and $g$ on $E_0$ such that $\hat{Y}_{d+1}=f.S+h.R_n$. As $\hat{Y}_{d+1}, R, S$ are homogeneous, we can assume that $f=f_d$ and $g=g_d$ are homogeneous polynomials of degree $d$.

Finally, define $\overline{X}=\frac{Y-f_{d} . S}{\tau}$. Note that $\omega=i_S i_{\overline{X}}(\nu_n)$ and $\deg(\mathcal{G}_{\overline{X}})=d$.
\end{proof}

Denote by $\mathcal{P}_n$ the space of polynomial vector fields on $\mathbb{C}^n$. Consider the following finite-dimensional vector space over $\mathbb{C}$
\begin{align*}
W_0 = \{ Y \in \mathcal{P}_n \mid [S,Y]=\lambda Y, \dive(Y) \equiv 0,  \deg(Y) \leq d+1, i_{R_n}i_{S}i_{\hat{Y}_{d+1}} (\nu_n) \equiv 0 \}.
\end{align*}

From (\ref{eqrot}) and the proof of Lemma \ref{lemmadeg}, if $\omega=i_S i_X (\nu_n)$ defines some foliation $\mathcal{F} \in \mathcal{F}(\mathfrak{P},\lambda,d+1)$ on $E_0$, then $Y=\rot(\omega) \in W_0$. Reciprocally, given $Y \in W_0$, setting $\omega_Y=\frac{1}{\tau}i_Si_Y (\nu_n)$ we have that $Y=\rot(\omega_Y)$. 

In other words, $W_0$ is nothing more than the ambient space of $Y=\rot(\omega_Y)$, whenever $\omega_Y$ defines a foliation $\mathcal{F} \in \mathcal{F}(\mathfrak{P},\lambda,d+1)$ on $E_0$. For example, if Theorem \hyperref[b]{B} (a) holds true, there exists some $Y \in W_0$ such that $0$ is a GK singularity of $Y$.

Denote by $V_0=\mathbb{P}(W_0)=\{[Y] \mid Y \in W_0, Y \neq 0\}$ the projectivization of $W_0$ and $\aut(\mathbb{P}^n)$ the group of automorphisms of $\mathbb{P}^n$. By definition of $\mathcal{F}(\mathfrak{P},\lambda,d+1)$, there is a rational map 
\begin{equation}\label{Phi}
\Phi: V_0 \times \aut(\mathbb{P}^n)  \DashedArrow[->,densely dashed    ] \mathscr{F}_2(d+1,n)
\end{equation}
given by $\Phi([Y],T)=T^{*}\mathcal{F}(S,Y)$, where $\mathcal{F}(S,Y) \in \mathscr{F}_2(d+1,n)$ is the foliation defined by $\omega_Y$ on $E_0$ and $\im(\Phi)=\mathcal{F}(\mathfrak{P},\lambda,d+1)$. As the domain of $\Phi$ is irreducible, we have the following result

\begin{prop}\label{irreducibility}
$\overline{\mathcal{F}(\mathfrak{P},\lambda,d+1)}$ is an irreducible subvariety of $\mathscr{F}_2(d+1,n)$.
\end{prop}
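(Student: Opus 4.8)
The plan is to exploit the rational parametrization $\Phi$ exhibited in \eqref{Phi}, thereby reducing the irreducibility of $\overline{\mathcal{F}(\mathfrak{P},\lambda,d+1)}$ to the irreducibility of the domain $V_0 \times \aut(\mathbb{P}^n)$ together with the standard fact that the closure of the image of an irreducible variety under a rational map is again irreducible.

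First I would check that the domain is irreducible. Since $W_0$ is a finite-dimensional vector space over $\mathbb{C}$, its projectivization $V_0 = \mathbb{P}(W_0)$ is a projective space, hence irreducible. The automorphism group $\aut(\mathbb{P}^n) \cong \mathrm{PGL}(n+1,\mathbb{C})$ is a connected linear algebraic group---concretely, a Zariski-dense open subset of $\mathbb{P}^{(n+1)^2-1}$ (the complement of the determinant hypersurface)---and is therefore irreducible. Because the base field $\mathbb{C}$ is algebraically closed, the product $V_0 \times \aut(\mathbb{P}^n)$ of two irreducible varieties is again irreducible.

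Next, recall that $\Phi$ is a rational map, so it is regular on a nonempty Zariski-open subset $U \subseteq V_0 \times \aut(\mathbb{P}^n)$, on which it is a morphism. As $U$ is a nonempty open subset of an irreducible space, it is irreducible; hence its image $\Phi(U)$ is an irreducible subset of $\mathscr{F}_2(d+1,n)$ (the continuous image of an irreducible space is irreducible), and consequently so is its Zariski closure $\overline{\Phi(U)}$. Since by construction $\im(\Phi) = \mathcal{F}(\mathfrak{P},\lambda,d+1)$, we obtain $\overline{\mathcal{F}(\mathfrak{P},\lambda,d+1)} = \overline{\Phi(U)}$, which is irreducible, proving the proposition.

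The only point demanding genuine care---and the step I expect to be the main obstacle---is justifying that $\Phi$ really is a rational map, i.e.\ that the assignment $([Y],T) \mapsto T^{*}\mathcal{F}(S,Y)$ depends algebraically on the parameters off a proper closed locus. This is where one uses that $\omega_Y = \frac{1}{\tau} i_S i_Y (\nu_n)$ has coefficients polynomial in the entries of $Y$, that the projective class $[\omega_Y]$ varies algebraically over the locus where $\cod(\sing(\omega_Y)) \geq 2$ (so that $\omega_Y$ genuinely defines a point of $\mathscr{F}_2(d+1,n)$), and that the pullback by $T \in \mathrm{PGL}(n+1,\mathbb{C})$ is algebraic in the entries of $T$. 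Granting the construction of $\Phi$ as a rational map already asserted in the text preceding \eqref{Phi}, the remaining steps are the soft dimension-theoretic and topological facts recorded above.
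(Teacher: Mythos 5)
Your proposal is correct and follows exactly the paper's argument: the paper also deduces the proposition immediately from the irreducibility of the domain $V_0 \times \aut(\mathbb{P}^n)$ of the rational map $\Phi$ in \eqref{Phi}, whose image is $\mathcal{F}(\mathfrak{P},\lambda,d+1)$. You have merely spelled out the standard facts (irreducibility of a projective space, of $\mathrm{PGL}(n+1,\mathbb{C})$, of a product, and of the closure of the image of an irreducible variety under a rational map) that the paper leaves implicit.
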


\begin{prop}\label{splits}
Given $\mathcal{F} \in \mathcal{F}(\mathfrak{P},\lambda,d+1)$, then the tangent sheaf of $\mathcal{F}$ splits as $\mathcal{T}\mathcal{F}=\mathcal{O} \oplus \mathcal{O}(1-d)$.
\end{prop}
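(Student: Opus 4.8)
The plan is to realize $\mathcal{T}\mathcal{F}$ as the direct sum of the two line subsheaves coming from $S$ and $X$, and then to show that these already fill up $\mathcal{T}\mathcal{F}$. Since $\omega=i_Si_X(\nu_n)$, both fields satisfy $i_S\omega=i_X\omega=0$, so the one-dimensional foliations $\mathcal{G}_S$ and $\mathcal{G}_X$ they define on $\mathbb{P}^n$ are subfoliations of $\mathcal{F}$; in particular $\mathcal{T}\mathcal{G}_S\subseteq\mathcal{T}\mathcal{F}$ and $\mathcal{T}\mathcal{G}_X\subseteq\mathcal{T}\mathcal{F}$. The field $S$ is linear, hence extends to a nonzero global holomorphic vector field on $\mathbb{P}^n$, and a direct computation shows that its zero locus $\sing(\mathcal{G}_S)=\{q_0,[e_1],\dots,[e_n]\}$ is finite; thus $\mathcal{G}_S$ has degree $1$ and $\mathcal{T}\mathcal{G}_S\cong\mathcal{O}$. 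By Lemma \ref{lemmadeg} we may choose $X$ with $\deg(\mathcal{G}_X)=d$, whence $\mathcal{T}\mathcal{G}_X\cong\mathcal{O}(1-d)$. These inclusions assemble into a morphism
\[
\phi:\mathcal{O}\oplus\mathcal{O}(1-d)\longrightarrow\mathcal{T}\mathcal{F},
\]
and the proposition reduces to proving that $\phi$ is an isomorphism. As a consistency check, $\det(\mathcal{O}\oplus\mathcal{O}(1-d))=\mathcal{O}(1-d)=\det\mathcal{T}\mathcal{F}$, the last equality being the standard relation $\det\mathcal{T}\mathcal{F}=\mathcal{O}(k-\deg\mathcal{F})$ for a dimension-$k$ foliation, here with $k=2$ and $\deg\mathcal{F}=d+1$.

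Next I would establish that $\phi$ is an isomorphism by an extension-across-codimension-two argument. Both the source $\mathcal{O}\oplus\mathcal{O}(1-d)$ and the target $\mathcal{T}\mathcal{F}$ are reflexive: the former because it is locally free, the latter because it is saturated in the locally free sheaf $T\mathbb{P}^n$ (saturation being built into the definition of the tangent sheaf, so that $N\mathcal{F}$ is torsion-free). Consequently, writing $j\colon U\hookrightarrow\mathbb{P}^n$ for the inclusion of the complement of any closed set of codimension $\geq 2$, one has $\mathcal{O}\oplus\mathcal{O}(1-d)=j_*\big((\mathcal{O}\oplus\mathcal{O}(1-d))|_U\big)$ and $\mathcal{T}\mathcal{F}=j_*(\mathcal{T}\mathcal{F}|_U)$. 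Hence it suffices to exhibit one such $U$ on which $\phi|_U$ is an isomorphism, for then $\phi=j_*(\phi|_U)$ is an isomorphism as well.

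The heart of the matter — and the step I expect to require the most care — is the codimension estimate for the locus where $\phi$ degenerates. Set $Z=\sing(\mathcal{G}_S)\cup\sing(\mathcal{G}_X)\cup\ld(S,X)$ and $U=\mathbb{P}^n\setminus Z$. Off $\sing(\mathcal{G}_S)$ and $\sing(\mathcal{G}_X)$ the fields $S$ and $X$ span line subbundles $\mathcal{O}\hookrightarrow T\mathbb{P}^n$ and $\mathcal{O}(1-d)\hookrightarrow T\mathbb{P}^n$, and off $\ld(S,X)$ these two lines are everywhere independent and span the rank-two bundle $\mathcal{T}\mathcal{F}|_U$; thus $\phi|_U$ carries a frame to the frame $\{S,X\}$ and is an isomorphism. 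It then remains to verify $\cod(Z)\geq 2$. Here $\sing(\mathcal{G}_S)$ is finite by the computation above, $\cod(\sing(\mathcal{G}_X))\geq 2$ because $\mathcal{G}_X$ has \emph{exact} degree $d$ thanks to the normalization of Lemma \ref{lemmadeg}, and $\ld(S,X)=\{\,S\wedge X=0\,\}=\sing(\omega)$; since $\mathcal{F}\in\mathscr{F}_2(d+1,n)$, its (reduced) defining form satisfies $\cod(\sing(\omega))\geq 2$, so $\ld(S,X)$ carries no divisorial component. Equivalently, $\wedge^2\phi\colon\mathcal{O}(1-d)\to\mathcal{O}(1-d)$ is a nonzero constant, so $S\wedge X$ vanishes only in codimension $\geq 2$. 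The delicate point to nail down is precisely this absence of a common divisorial factor, which is exactly what the degree normalization of Lemma \ref{lemmadeg} secures. With $\cod(Z)\geq 2$ in hand, the reflexive-extension argument applies and yields $\phi\colon\mathcal{O}\oplus\mathcal{O}(1-d)\xrightarrow{\ \sim\ }\mathcal{T}\mathcal{F}$, as claimed.
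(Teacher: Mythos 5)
Your proof is correct and follows essentially the same route as the paper: both exhibit $\mathcal{T}\mathcal{F}$ as spanned by the degree-one foliation $\mathcal{G}_S$ (contributing $\mathcal{O}$) and the degree-$d$ foliation $\mathcal{G}_X$ normalized via Lemma \ref{lemmadeg} (contributing $\mathcal{O}(1-d)$). The only difference is that the paper realizes the defining form in homogeneous coordinates as $i_{R_{n+1}} i_S i_Z (\nu_{n+1})$ and cites \S 2.2 of \cite{5} for the resulting splitting, whereas you prove that step by hand via the reflexive-extension argument across the codimension-two degeneracy locus.
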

\begin{proof}
Let $\Omega$ be a homogeneous ($n-2$)-form of degree $d+2$ defining $\mathcal{F}$ in homogeneous coordinates, whose restriction to $E_0$ is $\omega=i_S i_X (\nu_n)$, where $\deg(\mathcal{G}_X)=d$.

Let $Z$ be a homogeneous vector field of degree $d$ on $\mathbb{C}^{n+1}$ defining $\mathcal{G}_X$ in homogeneous coordinates. Set $\Omega_1=i_{R_{n+1}} i_S i_Z (\nu_{n+1})$, where in the definition of $\Omega_1$ we consider $S$ as a vector field on $\mathbb{C}^{n+1}$. We have that $\Omega_1|_{E_0}=\mu. \omega$, for some $\mu \in \mathbb{C}^{*}$ (see  Proposition 4.1.2 of \cite{8}). 
Since $\deg(\Omega_1)=d+2$, $\Omega_1$ also defines $\mathcal{F}$ in homogeneous coordinates. This concludes the proof (see \S2.2 of \cite{5}).
\end{proof}

Next we will obtain expressions for  $\mathcal{F} \in \mathcal{F}(\mathfrak{P},\lambda,d+1)$ in other affine coordinate systems. For example, in  $$E_1= \{(1:u_n:u_{n-1}:\cdots:u_1)|(u_1,\cdots,u_n) \in \mathbb{C}^n\},$$ $S$ is given by $-S_1=- \sum_{j=1}^{n} \overline{p}_j u_j\partial / \partial u_j$, where $\overline{p}_j=p_1-p_{n-j+2},j = 1,\dots,n$.

Observe that $\overline{p}_1=p_1>\overline{p}_2> \cdots > \overline{p}_n$. If $\deg(\mathcal{G}_{X})=d$, $X$ has a pole of order $d-1$ at $u_1=0$ and we can write $X=\frac{X_1}{u_1^{d-1}}$, where $X_1$ defines $\mathcal{G}_{X}$ on $E_1$.
The vector field $S_1=-S$ on $E_1$ has positive eigenvalues and it will be considered on this chart. Hence
\begin{align*}
[S_1,X_1]=[-S,u_{1}^{d-1}.X]=\lambda_1 . X_1,
\end{align*}
where $\lambda_1=p_1(d-1)-\lambda$ and $\omega_1=i_{S_1}i_{X_1}(du_1 \wedge du_2 \wedge \cdots \wedge du_n)$ defines $\mathcal{F}$ on $E_1$. If $Y_1=\rot(\omega_1)$, we can write in $E_1$ similar expressions as (\ref{eqrot}).

We can proceed equally in other charts, as summarized in the following proposition (Recall the parameters (\ref{mainpar})). 

\begin{prop}\label{charts}
Given $\mathcal{F} \in \mathcal{F}(\mathfrak{P},\lambda,d+1)$, there exist affine coordinate systems $(E_i,(x_1,\ldots,x_n)), i=0,\ldots,n$, such that $\mathbb{P}^n=E_0 \cup \cdots \cup E_n$ and
\begin{enumerate}[(a)]
\item
On $E_i,i=0,\ldots,n$, $\mathcal{F}$ is defined by $\omega_i=i_{S_i} i_{X_i} (\nu_n)$, $[S_i,X_i]=\lambda_i. X_i$ ($\lambda_0=\lambda$). If $Y_i=\rot(\omega_i)$, then
\begin{align*}
Y_i=\tau_i.X_i -\dive(X_i). S_i,[S_i,Y_i]=\lambda_i. Y_i, \tau_i. \omega_i=i_{S_i} i_{Y_i} (\nu_n).
\end{align*}
Since $\tau_0=\tau,\tau_1 \neq 0$, it follows that $\dive(Y_i) \equiv 0,i=0,1$.
\item 
$S_0=S$, $S_1=\overline{p}_1 x_1 \partial / \partial x_1 +\cdots+\overline{p}_n x_n \partial / \partial x_n$ and for $i=2,\ldots,n$, writing $S_i=\sum_{j=1}^{n}\rho_j x_j \partial / \partial x_j$, we have
\begin{center}
$\rho_1=p_1-p_i>\cdots>\rho_{i-1}=p_{i-1}-p_i>0>\rho_i=p_{i+1}-p_i>\cdots>\rho_{n-1}=p_n-p_i>\rho_n=-p_i.$
\end{center}
\item
Up to a linear automorphism of $\mathbb{P}^n$, we can assume that 
\begin{align*}
E_0&= \{(x_1:\cdots:x_n:1)|(x_1,\cdots,x_n) \in \mathbb{C}^n\}, \\
E_1&= \{(1:x_n:x_{n-1}:\cdots:x_1)|(x_1,\cdots,x_n) \in \mathbb{C}^n\},  \\ 
E_i&= \{(x_1:\cdots:x_{i-1}:1:x_i:\cdots:x_n)|(x_1,\cdots,x_n) \in \mathbb{C}^n\}, i \in \{2,3,\dots,n\}.
\end{align*}
\end{enumerate}
\end{prop}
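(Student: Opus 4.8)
The plan is to read all three items off one coordinate computation, performed chart by chart exactly as the text already does for $E_0$ and $E_1$. First I would fix homogeneous coordinates $[Z_1:\cdots:Z_{n+1}]$ on $\mathbb{P}^n$, assign the weight $p_k$ to $Z_k$ for $1\leq k\leq n$ and the weight $p_{n+1}=0$ to $Z_{n+1}$ (the convention of (\ref{mainpar})), and view $S$ as the infinitesimal generator of the induced diagonal $\mathbb{C}^{*}$-action, so that its restriction to $E_0$ is the given vector field. I would then set $E_0=\{Z_{n+1}\neq 0\}$, $E_1=\{Z_1\neq 0\}$ and $E_i=\{Z_i\neq 0\}$ for $2\leq i\leq n$, with the coordinate orderings displayed in item (c). These are the $n+1$ standard affine charts, so $\mathbb{P}^n=E_0\cup\cdots\cup E_n$, which is (c); the phrase ``up to a linear automorphism'' is simply the permutation of the $Z_k$ realizing the displayed forms.

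For (b) I would note that every coordinate of $E_i$ is a ratio $Z_k/Z_i$, on which $S$ acts diagonally with weight $p_k-p_i$. Reading these weights off in the chosen orderings gives, for $i\geq 2$, the values $\rho_1=p_1-p_i,\dots,\rho_{i-1}=p_{i-1}-p_i$, then $\rho_i=p_{i+1}-p_i,\dots,\rho_{n-1}=p_n-p_i$, and finally $\rho_n=-p_i$ coming from the ratio $Z_{n+1}/Z_i$ (weight $p_{n+1}-p_i=-p_i$); the displayed chain of strict inequalities is immediate from $p_1>\cdots>p_n\geq 1$, the only borderline case $\rho_{n-1}>\rho_n$ being $p_n>0$. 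For $i=1$ all weights are negative, so I put $S_1=-S|_{E_1}$ to obtain the positive weights $\overline{p}_j=p_1-p_{n-j+2}$; the reversal of coordinates built into the chosen form of $E_1$ is exactly what lists them in decreasing order $\overline{p}_1>\cdots>\overline{p}_n$. Taking $S_0=S$ and $S_i=S|_{E_i}$ for $i\geq 2$ completes (b).

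For (a) I would represent $\mathcal{G}_X$ in each chart. Since $\deg(\mathcal{G}_X)=d$, its affine representative has a pole of order $d-1$ along the image of the hyperplane at infinity $\{Z_{n+1}=0\}$ of $E_0$ — precisely as recorded for $E_1$ in the text — the vanishing locus of the coordinate $w=Z_{n+1}/Z_i$ (so $w=x_n$ for $i\geq 2$ and $w=u_1$ for $E_1$); clearing it yields the polynomial vector field $X_i=w^{d-1}\,X|_{E_i}$ defining $\mathcal{G}_X$ on $E_i$, and since $S_i,X_i$ span the tangent distribution of $\mathcal{F}$, the form $\omega_i=i_{S_i}i_{X_i}(\nu_n)$ defines $\mathcal{F}$ there. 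The eigenvalue then falls out of the Leibniz rule and the naturality of the bracket: using $S_i(w)=(p_{n+1}-p_i)w=-p_i w$ and $[S|_{E_i},X|_{E_i}]=\lambda\,X|_{E_i}$ one gets $[S_i,X_i]=(\lambda-p_i(d-1))X_i$ for $i\geq 2$, while the sign flip defining $S_1$ reproduces the text's $[S_1,X_1]=(p_1(d-1)-\lambda)X_1$; these are $\lambda_i$ and $\lambda_1$ of (\ref{mainpar}). The rotational relations then follow verbatim from the Cartan computation giving (\ref{eqrot}): it produces $Y_i=\rot(\omega_i)=\tau_i X_i-\dive(X_i)S_i$ with $\tau_i=\lambda_i+\dive(S_i)$, and evaluating the trace $\dive(S_i)$ (the sum of the weights of $S_i$) confirms $\tau_i=\tau-p_i(n+d)$ for $i\geq 2$ and $\tau_1=p_1(n+d)-\tau$. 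Moreover $[S_i,Y_i]=\lambda_i Y_i$ results from $[S_i,X_i]=\lambda_i X_i$, from $[S_i,S_i]=0$, and from the quasi-homogeneity identity $S_i(\dive(X_i))=\lambda_i\dive(X_i)$ (itself a consequence of $\dive([S_i,X_i])=S_i(\dive(X_i))-X_i(\dive(S_i))$ with $\dive(S_i)$ constant), while $\tau_i\,\omega_i=i_{S_i}i_{Y_i}(\nu_n)$ is just the definition of $Y_i$ reread through $i_{S_i}i_{S_i}(\nu_n)=0$. Finally, for every $i$ the form $i_{Y_i}(\nu_n)=d\omega_i$ is closed, so $\dive(Y_i)\,\nu_n=\mathcal{L}_{Y_i}\nu_n=d\,d\omega_i=0$; in particular $\dive(Y_i)\equiv 0$ for $i=0,1$ as stated, and for those two indices $\tau_i\neq 0$ (here $\tau_0=\tau>0$, and $\tau_1>0$ by Proposition \ref{prop-qh-vf}(c) applied in $E_1$, where $S_1$ has all weights $\geq 1$) additionally permits recovering $\omega_i=\tfrac{1}{\tau_i}i_{S_i}i_{Y_i}(\nu_n)$, which places $Y_i$ in the divergence-free ambient space underlying $W_0$.

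I expect the only genuine difficulty to be bookkeeping rather than ideas. Conceptually every chart is controlled by the same two facts — naturality of the Lie bracket and Cartan's formula — so each computation collapses to the ones already displayed for $E_0$ and $E_1$. The care lies in fixing the permutation of homogeneous coordinates so that $E_1,\dots,E_n$ take precisely their stated forms, in tracking the order-$(d-1)$ pole of $\mathcal{G}_X$ through each transition, and in checking that the resulting $\rho_j,\lambda_i,\tau_i$ agree with (\ref{mainpar}) term by term; the asymmetric sign convention (the negation in $E_1$ versus $S_i=S|_{E_i}$ for $i\geq 2$) is the likeliest source of slips and deserves the closest attention.
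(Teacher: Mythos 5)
Your proposal is correct and follows essentially the same route as the paper, which carries out the computation explicitly only for $E_1$ (identifying the pole of order $d-1$ of $X$ along the hyperplane at infinity, the sign flip $S_1=-S|_{E_1}$, and the eigenvalue $\lambda_1=p_1(d-1)-\lambda$) and then asserts that the other charts are handled identically. Your chart-by-chart bookkeeping of the weights $p_k-p_i$, the identity $\tau_i=\lambda_i+\dive(S_i)$, and the Cartan/divergence computations fill in exactly the details the paper leaves to the reader, and they check out against the parameters of (\ref{mainpar}).
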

\begin{remark}
We sort the singularities $q_0,\ldots,q_n$ of $S$, thought as a global vector field on $\mathbb{P}^n$, as the points corresponding to $0 \in E_i, i=0,\ldots,n$, respectively.
\end{remark}

From now on, we think the foliations in $\mathcal{F}(\mathfrak{P},\lambda,d+1)$ endowed with the parameters of Proposition \ref{charts}.

\begin{prop}\label{symmetry}
Assume that $p_1> \cdots>p_n$, $l_1>\cdots>l_n$ are relatively prime positive integers, $\lambda,\mu \in \mathbb{Z}$. Then $\overline{\mathcal{F}(\mathfrak{P},\lambda,d+1)}=\overline{\mathcal{F}(\mathfrak{L},\mu,d+1)}$ if and only if either $\mathfrak{L}=\mathfrak{P},\mu=\lambda$ or $\mathfrak{L}=\overline{\mathfrak{P}},\mu=\lambda_1$, where $\overline{\mathfrak{P}}=(\overline{p}_1,\ldots,\overline{p}_n)$.
\end{prop}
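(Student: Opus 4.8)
The plan is to read off the pair $(\mathfrak{P},\lambda)$, up to the stated symmetry, from intrinsic data of a generic foliation in the common variety, after first disposing of the easy implication.

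For the ``if'' direction the case $\mathfrak{L}=\mathfrak{P}$, $\mu=\lambda$ is trivial. For $\mathfrak{L}=\overline{\mathfrak{P}}$, $\mu=\lambda_1$ I would invoke Proposition \ref{charts}: any $\mathcal{F}\in\mathcal{F}(\mathfrak{P},\lambda,d+1)$ is defined on the chart $E_1$ by $\omega_1=i_{S_1}i_{X_1}(\nu_n)$ with $[S_1,X_1]=\lambda_1 X_1$ and $S_1=\sum_j \overline{p}_j x_j\,\partial/\partial x_j$. Since $\overline{p}_1>\cdots>\overline{p}_n\geq1$ are again relatively prime (because $\gcd(p_1,p_1-p_2,\dots,p_1-p_n)=\gcd(p_1,\dots,p_n)=1$), this exhibits $\mathcal{F}$ as a member of $\mathcal{F}(\overline{\mathfrak{P}},\lambda_1,d+1)$, so the two families coincide. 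One checks $\overline{\overline{\mathfrak{P}}}=\mathfrak{P}$ and $\overline{p}_1(d-1)-\lambda_1=\lambda$, so the operation is an involution and no containment is lost; passing to Zariski closures gives the equality.

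For the ``only if'' direction I assume the closures agree and pick $\mathcal{F}$ in the dense constructible intersection $\mathcal{F}(\mathfrak{P},\lambda,d+1)\cap\mathcal{F}(\mathfrak{L},\mu,d+1)$, generic enough that its distinguished field is semisimple with $n+1$ distinct fixed points. The first key point is to recover $S$: assuming $d\geq2$, Proposition \ref{splits} gives $H^0(\mathbb{P}^n,\mathcal{T}\mathcal{F})=H^0(\mathcal{O})\oplus H^0(\mathcal{O}(1-d))=\mathbb{C}$, so the space of global vector fields tangent to $\mathcal{F}$ is one-dimensional. Consequently the diagonal field coming from the $\mathfrak{P}$-presentation and the one coming from the $\mathfrak{L}$-presentation must be proportional. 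The intrinsic datum of the semisimple $S$ is then the multiset of weights $\{0,p_n,\dots,p_1\}$ on $\mathbb{C}^{n+1}$, taken up to an affine change $x\mapsto cx+t$ (the scalar from the projective normalization, the shift because the radial field is trivial on $\mathbb{P}^n$) and up to permutation of the vertices. Matching this against $\{0,l_n,\dots,l_1\}$ and using that both are primitive positive integer configurations with minimum $0$, a short arithmetic argument forces the comparison map to be $\pm\mathrm{id}$ after normalization: the orientation-preserving case yields $\mathfrak{L}=\mathfrak{P}$, and the orientation-reversing case $x\mapsto p_1-x$ yields exactly $\mathfrak{L}=\overline{\mathfrak{P}}$, the sign of the scalar relating the two copies of $S$ selecting the alternative.

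It remains to recover $\lambda$, which I regard as the main obstacle. Once $S$ is pinned down, with primitive positive weights $p_1>\cdots>p_n$ at the distinguished vertex $q_0$ (the unique fixed point where all eigenvalues are positive), $\lambda$ appears as the $\mathrm{ad}_S$-eigenvalue of a generator transverse to $S$: the germ of $\mathcal{F}$ at $q_0$ is generated by $S$ and some $X$ with $[S,X]=\lambda X$. To see this is well defined, suppose $\{S,X'\}$ also generates the germ with $[S,X']=\mu X'$; writing $X'=fX+gS$ with $f$ a unit at $q_0$ and applying $\mathrm{ad}_S$ gives $S(f)=(\mu-\lambda)f$, and evaluating at $q_0$, where $S$ vanishes, forces $\mu=\lambda$. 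Thus in the case $\mathfrak{L}=\mathfrak{P}$ we obtain $\mu=\lambda$, while in the case $\mathfrak{L}=\overline{\mathfrak{P}}$ the distinguished field is $-S$ read at the opposite vertex, and the same computation carried out in the chart $E_1$ reproduces the value $\lambda_1$, giving $\mu=\lambda_1$. The degenerate case $d=1$, where $H^0(\mathcal{T}\mathcal{F})$ is two-dimensional and $S$ is no longer the unique tangent field, must be treated separately by identifying $S$ as the semisimple tangent field realizing the prescribed eigenvalue configuration, so I would isolate it or appeal to the known description in that degree.
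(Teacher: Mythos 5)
Your argument for $d\geq 2$ is sound and is essentially the paper's argument in a more intrinsic packaging: the paper also reduces to showing that $\mathcal{G}_S$ is the unique degree-one foliation by curves tangent to $\mathcal{F}$ (via the explicit division $Z_1=f R_{n+1}+gS+hZ$ in homogeneous coordinates, which is the same computation that underlies the splitting of Proposition \ref{splits} that you invoke), and your normalization of the eigenvalue multiset up to $x\mapsto cx+t$ with $c=\pm1$ together with the $\mathrm{ad}_S$-eigenvalue computation at $q_0$ reproduces the paper's facts (1)--(3). One small point you assert without justification is that $f$ is a unit in $X'=fX+gS$: this does hold, because $\{S,X\}$ is a local frame of $\mathcal{T}\mathcal{F}$ at $q_0$ (otherwise $\omega=i_Si_X(\nu_n)$ would acquire a divisorial singular component, contradicting $\cod(\sing\omega)\geq 2$), but it deserves a line.

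The genuine gap is the case $d=1$, which you explicitly defer. The proposition is stated for all degrees, and there is no ``known description'' of $\mathscr{F}_2(2,n)$ to appeal to (the degree-two classification of Cerveau--Lins Neto is for codimension one). When $d=1$ the space of global vector fields tangent to $\mathcal{F}$ is two-dimensional, spanned by $S$ and $X$, and your plan of ``identifying $S$ as the semisimple tangent field realizing the prescribed eigenvalue configuration'' does not single out $S$: for $\lambda\neq 0$ every $V=aS+bX$ with $a\neq 0$ is conjugate to $aS$ by an affine map (so every member of the pencil is semisimple), and for $\lambda=0$ every $aS+bX$ is already diagonal, giving a whole pencil of semisimple tangent fields with varying eigenvalue configurations. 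The paper needs two separate arguments here: for $\lambda=0$, a genericity argument showing that for generic $X$ the only members of the pencil proportional to a primitive decreasing integer diagonal field are $\pm S$; for $\lambda\neq 0$ (normalized to $\lambda<0$ using $\mathcal{F}(\mathfrak{P},\lambda,2)=\mathcal{F}(\mathfrak{P},-\lambda,2)$), the construction of an affine map $\psi$ with $\psi^{*}V=aS$, forcing $a=\pm1$ and transporting the presentation back to a standard chart to read off $\mu$. Without some version of these arguments your proof is incomplete in degree two.
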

\begin{proof}
It follows from Proposition \ref{charts} that $\mathcal{F}(\mathfrak{P},\lambda,d+1)=\mathcal{F}(\overline{\mathfrak{P}},\lambda_1,d+1)$, which immediately ensures the backward direction of the proposition. For the other direction, let $\alpha_0:\mathbb{C}^n \to \mathbb{P}^n$ be the affine coordinate chart given by $\alpha_0(x_1,\ldots,x_n)=(x_1:\cdots:x_n:1)$. Recall that there exists a one-to-one correspondence between $\aut(\mathbb{P}^n)$ and the set of affine coordinate charts $\mathcal{C}$, which associates $T \in \aut(\mathbb{P}^n)$ to $T \circ \alpha_0 \in \mathcal{C}$. For $\alpha \in \mathcal{C}$, denote by $T_\alpha$ the element of $\aut(\mathbb{P}^n)$ inducing $\alpha$.

Given $\mathcal{F} \in \mathcal{F}(\mathfrak{P},\lambda,d+1)$ and $\Omega$ a homogeneous form defining $\mathcal{F}$, by definition and Lemma \ref{lemmadeg} there is some $\beta \in \mathcal{C}$ such that $\beta^{*}\Omega=\omega=i_S i_X (\nu_n),[S,X]=\lambda.X$ and $\deg(\mathcal{G}_X)=d$. We use the following results

\begin{enumerate}[leftmargin=*]
\item
Denote by $\mathcal{V}$ the set of holomorphic vector fields on $\mathbb{P}^n$, $\mathcal{D}_{+}$ the set of diagonal vector fields $W=k_1 x_1\partial/\partial x_1+\cdots+k_nx_n\partial/\partial x_n$ on $\mathbb{C}^n$, where $k_1>\ldots>k_n$ are relatively prime positive integers and $\mathcal{D}=\mathcal{D}_{+}\cup\{-W \mid W \in \mathcal{D}_{+}\}$. Suppose that $Z \in \mathcal{V}$ is such that $\alpha_0^{*}Z=S$. If $\alpha \in \mathcal{C}$ satisfies $\alpha^{*}Z=W \in \mathcal{D}$, then either $W=S$ and $T_\alpha$ is given by a diagonal element of $\aut(\mathbb{P}^n)=\mathrm{PGL}(n+1,\mathbb{C})$, or $W=-S_1$ and $T_\alpha$ is given by a secondary diagonal element of $\aut(\mathbb{P}^n)$, where $S_1$ is as in Proposition \ref{charts} (b).
\item
If $\omega =i_S i_{\tilde{X}}(\nu_n)$, with $[S,\tilde{X}]=\tilde{\lambda}.\tilde{X}$, then $\tilde{\lambda}=\lambda$.
\item For $\alpha \in \mathcal{C}$ and $W \in \mathcal{D}$, denote by $W_\alpha$ the element of $\mathcal{V}$ such that $\alpha^{*} W_\alpha=W$. Given $W \in \mathcal{D}_{+}$ and $\xi \in \mathbb{Z}$, suppose that $\alpha \in \mathcal{C}$ is such that $\alpha^{*} \Omega=i_Wi_{\tilde{X}}(\nu_n),[W,\tilde{X}]=\xi. \tilde{X}$ and $W_{\alpha}=\pm S_{\beta}$. If $W_{\alpha}=S_{\beta}$ then $W=S$ and $\xi=\lambda$, while if $W_{\alpha}=-S_{\beta}$ then $W=S_1$ and $\xi=\lambda_1$. In fact, it suffices to check for $\beta=\alpha_0$, and it follows from (1) and (2).
\end{enumerate}

As $\overline{\mathcal{F}(\mathfrak{P},\lambda,d+1)}=\overline{\mathcal{F}(\mathfrak{L},\mu,d+1)}$, we can assume that $\mathcal{F} \in \mathcal{F}(\mathfrak{P},\lambda,d+1)$ is generic and $\mathcal{F} \in \mathcal{F}(\mathfrak{L},\mu,d+1)$. By definition there is $\gamma \in \mathcal{C}$ such that $\gamma^{*}\Omega=i_{\tilde{S}}i_{\tilde{X}}(\nu_n)$, $[\tilde{S},\tilde{X}]=\mu.\tilde{X}$, where $\tilde{S}=l_1x_1 \partial/\partial x_1+\cdots+l_n x_n \partial /\partial x_n$. 

From now on we assume that $\beta=\alpha_0$. We claim that if either $d=1$ and $\lambda=0$ or $d \geq 2$ then $\tilde{S}_\gamma=\pm S_{\alpha_0}$, therefore by (3) the proposition follows in both cases. In fact, if $d=1$ and $\lambda=0$, it follows from Proposition \ref{closerlook} (c) that $X=c_1 x_1 \partial/\partial x_1+\cdots+c_n x_n \partial/\partial x_n$, where $c_1,\ldots,c_n \in \mathbb{C}$. Since $\tilde{S}_\gamma$ is tangent to $\mathcal{F}$, there are $a,b \in \mathbb{C}$ such that $\alpha_0^{*}\tilde{S}_\gamma=a.S+b.X$ is a diagonal vector field. For $X$ generic, we have that $a=\pm1$ and $b=0$, then $\tilde{S}_\gamma=\pm S_{\alpha_0}$. If $d \geq 2$, we prove that $\mathcal{G}_S$ is the unique foliation by curves of degree one tangent to $\mathcal{F}$, which implies that $\tilde{S}_\gamma=\pm S_{\alpha_0}$. We assume that $\Omega=i_{R_{n+1}} i_S i_Z (\nu_{n+1})$ is as in the proof of Proposition \ref{splits}. If $Z_1$ is a homogeneous vector field of degree $1$ in $\mathbb{C}^{n+1}$ such that $i_{Z_1}\Omega =0$, there are holomorphic functions $f,g,h$ on $\mathbb{C}^{n+1}$ such that $Z_1=f.R_{n+1}+g.S+h.Z$. Since $i_{R_{n+1}} i_S i_{Z_1} (\nu_{n+1})=h.\Omega$, it follows that $h=0$. In addition $f=f(0)$ and $g=g(0)$ are constant functions. Then the foliation defined in homogeneous coordinates by $Z_1=f(0).R_{n+1}+g(0).S$ is $\mathcal{G}_S$, and the result follows. 

Finally suppose that $d=1$ and $\lambda \neq 0$. As $\lambda+\lambda_1=p_1(d-1)=0$ and $\mathcal{F}(\mathfrak{P},\lambda,2)=\mathcal{F}(\mathfrak{P},-\lambda,2)$, we can assume that $\lambda<0$. It follows from Proposition \ref{closerlook} (c) that $X=\hat{X}_0+\hat{X}_1$, where $\hat{X}_1$ is a linear vector field given by a strictly upper triangular matrix. Since $\tilde{S}_\gamma$ is tangent to $\mathcal{F}$, there are $a,b \in \mathbb{C}$ such that $\alpha_0^{*}\tilde{S}_\gamma=V:=a.S+b.X$. Clearly $a \neq 0$. Note that there is a unique $x_0 \in \mathbb{C}^n$ such that $V(x_0)=0$. One can show that there is an invertible affine map $\psi:\mathbb{C}^n \to \mathbb{C}^n$, $\psi(0)=x_0$, such that $\psi^{*} V=a.S$; hence $a=\pm1$. If $a=1$, as $\omega=i_{V}i_X(\nu_n),[V,X]=\lambda.X$, we can write $\psi^{*}\omega=i_Si_U(\nu_n)$, $[S,U]=\lambda.U$. This implies the existence of $\eta \in \mathcal{C}$ such that $\eta^{*}\Omega=i_Si_U(\nu_n)$, $[S,U]=\lambda.U$ and $\tilde{S}_\gamma=S_{\eta}$. From (3), it follows that $\mu=\lambda$ and $\tilde{S}=S$. If $a=-1$, a similar argument shows that $\mu=\lambda_1=-\lambda$ and $\tilde{S}=S_1$. This concludes the proof of the proposition. 
\end{proof}

Next proposition provides the dimension of a general family $\overline{\mathcal{F}(\mathfrak{P},\lambda,d+1)}$.

\begin{prop}\label{dim}
Assume that $p_1> \cdots>p_n$ are relatively prime positive integers, $\lambda \in \mathbb{Z}$. Then 
\begin{align*}
\dim\left(\overline{\mathcal{F}(\mathfrak{P},\lambda,d+1)}\right)=
\begin{cases}
\dim V_0+n^2+n, d \geq 2, \\
\dim V_0+n^2+n-1, d=1 \text{ and } \lambda\neq0,\\
n^2+2n-2, d=1 \text{ and } \lambda=0.
\end{cases}
\end{align*}
\end{prop}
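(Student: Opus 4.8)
The plan is to read off the dimension from the rational parametrization $\Phi$ of (\ref{Phi}). Since $V_0 \times \aut(\mathbb{P}^n)$ is irreducible and $\im(\Phi)=\mathcal{F}(\mathfrak{P},\lambda,d+1)$ is dense in its closure, the theorem on the dimension of the fibers of a dominant rational map gives $\dim\overline{\mathcal{F}(\mathfrak{P},\lambda,d+1)}=\dim V_0+\dim\aut(\mathbb{P}^n)-\phi$, where $\phi$ is the dimension of a generic fiber and $\dim\aut(\mathbb{P}^n)=n^2+2n$. Thus everything reduces to computing $\phi$. I would fix a generic $\mathcal{F}_0=\mathcal{F}(S,Y_0)\in\mathcal{F}(\mathfrak{P},\lambda,d+1)$, corresponding to $([Y_0],\mathrm{id})$, and set $\mathcal{S}_0=\{\mathcal{F}(S,Y):[Y]\in V_0\}$, the foliations already in standard form on $E_0$ for our fixed $S$. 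Because $\Phi([Y],T)=T^{*}\mathcal{F}(S,Y)$, the equation $\Phi([Y],T)=\mathcal{F}_0$ means $\mathcal{F}(S,Y)=(T^{-1})^{*}\mathcal{F}_0$; and since a foliation in $\mathscr{F}_2(d+1,n)$ determines its defining form up to a scalar, $Y=\rot(\omega_Y)$ recovers $[Y]$ uniquely from $(T^{-1})^{*}\mathcal{F}_0$. Hence the fiber is isomorphic to $G:=\{T\in\aut(\mathbb{P}^n):(T^{-1})^{*}\mathcal{F}_0\in\mathcal{S}_0\}$ and $\phi=\dim G$.

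Next I would pin down $G$. A necessary condition for $T\in G$ is that $S$ be tangent to $(T^{-1})^{*}\mathcal{F}_0$, equivalently that $(T^{-1})_{*}S$ lie in the space $H^{0}(\mathcal{T}\mathcal{F}_0)$ of global vector fields on $\mathbb{P}^n$ tangent to $\mathcal{F}_0$. By Proposition \ref{splits}, $\mathcal{T}\mathcal{F}_0=\mathcal{O}\oplus\mathcal{O}(1-d)$, so $h^{0}(\mathcal{T}\mathcal{F}_0)=1$ if $d\geq2$ and $h^{0}(\mathcal{T}\mathcal{F}_0)=2$ if $d=1$. For $d\geq2$ this forces $(T^{-1})_{*}S\in\mathbb{C}\cdot S$, so $G$ is contained in the normalizer $N_S$ of the line $\mathbb{C}\,S$ in $\mathrm{PGL}(n+1,\mathbb{C})$; since $S$ is regular semisimple (distinct weights $p_1>\cdots>p_n>p_{n+1}=0$) the centralizer of $S$ is the diagonal torus $\mathbb{T}$, and $N_S$ has the same dimension $n$ as $\mathbb{T}$, the extra components being finite (they correspond to the symmetry of Proposition \ref{symmetry}). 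On the other hand every $T\in\mathbb{T}$ satisfies $T_{*}S=S$, hence $(T^{-1})^{*}\mathcal{F}_0=\mathcal{F}(S,T_{*}Y_0)\in\mathcal{S}_0$, so $\mathbb{T}\subseteq G\subseteq N_S$ and $\phi=\dim G=n$, yielding $\dim V_0+n^2+n$.

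For $d=1$ the second summand of $\mathcal{T}\mathcal{F}_0$ is $\mathcal{O}$, and $H^{0}(\mathcal{T}\mathcal{F}_0)=\langle S,X\rangle$, where $X$ is the linear vector field defining $\mathcal{G}_X$; here the two subcases diverge according to the nature of $X$ recorded in the proof of Proposition \ref{symmetry}. If $\lambda\neq0$, then $X=\hat{X}_0+\hat{X}_1$ with $\hat{X}_1$ strictly upper triangular, so $X$ is nilpotent with respect to the weight flag of $S$; consequently $aS+bX$ has the same eigenvalues as $aS$ and is conjugate to $S$ only for $a=1$ (up to finitely many choices), whence $\langle S,X\rangle$ meets the adjoint orbit of $S$ in the line $\{S+bX\}$. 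Pulling this one-dimensional set back by the orbit map $T\mapsto(T^{-1})_{*}S$, whose fibers are the $n$-dimensional $\mathbb{T}$-cosets, bounds $\dim G\leq n+1$. For the reverse inequality I would use that $X$, being tangent to $\mathcal{F}_0$, integrates to a one-parameter unipotent subgroup $U=\{\exp(sX)\}\subseteq\mathrm{Stab}(\mathcal{F}_0)$ not contained in $\mathbb{T}$, and that $\mathbb{T}$ normalizes $U$ because $[S,X]=\lambda X$; thus $\mathbb{T}\cdot U\subseteq G$ has dimension $n+1$ and $\phi=n+1$, giving $\dim V_0+n^2+n-1$. If instead $\lambda=0$, then $X$ is a generic diagonal field, so $\langle S,X\rangle$ consists of diagonal matrices and meets the orbit of $S$ in a finite set; hence $\dim G=\dim\mathbb{T}=n$. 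In this last case I would also compute $\dim V_0$ directly: the relation $Y=\tau X-\dive(X)S$ identifies $W_0$ with the traceless diagonal vector fields, so $\dim W_0=n-1$, $\dim V_0=n-2$, and the formula collapses to $n^2+2n-2$.

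The main obstacle is the exact determination of $\dim G$, that is, controlling both inequalities for the generic fiber. The upper bounds come from the tangent sheaf splitting together with the eigenvalue/orbit analysis of $S$, and the lower bounds from exhibiting explicit subgroups ($\mathbb{T}$, and the unipotent $U$ when $d=1$, $\lambda\neq0$) inside $G$; the finite discrepancy between $G$ and these subgroups, harmless for dimensions, is exactly accounted for by the $\mathfrak{P}\mapsto\overline{\mathfrak{P}}$ symmetry of Proposition \ref{symmetry}. One must also verify the genericity hypotheses under which $h^{0}(\mathcal{T}\mathcal{F}_0)$ and the orbit intersections attain the stated values, which is precisely where the assumption that $p_1,\dots,p_n$ are distinct and relatively prime enters.
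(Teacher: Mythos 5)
Your proof is correct and follows the same skeleton as the paper's: both reduce the statement to computing the dimension $k$ of a generic fibre of the parametrization $\Phi$ of (\ref{Phi}), using that $[Y]$ is recovered from the foliation via $Y=\rot(\omega_Y)$ (the paper isolates this as a Claim on the injectivity of $Y\mapsto\Omega_Y$). Where you diverge is in how the fibre is pinned down. The paper identifies the fibre directly with (one or two) cosets of the diagonal subgroup $D(n+1)$, resp.\ the secondary-diagonal subgroup $A(n+1)$, by invoking the chart-versus-automorphism dictionary established in the proof of Proposition \ref{symmetry} (if $S_{T\circ\alpha_0}=\pm S_{\alpha_0}$ then $T$ is diagonal or antidiagonal); for $d=1$, $\lambda\neq0$ it then merely asserts that ``the dimension of a generic fibre is one more.'' You instead bound the fibre group $G$ from above by $h^0(\mathcal{T}\mathcal{F}_0)$ (via Proposition \ref{splits}) together with an adjoint-orbit/centralizer count in $\mathrm{PGL}(n+1,\mathbb{C})$, and from below by exhibiting explicit subgroups: the torus $\mathbb{T}$ in all cases, and additionally the unipotent flow $U=\{\exp(sX)\}$ normalized by $\mathbb{T}$ when $d=1$, $\lambda\neq0$. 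The two arguments rest on the same underlying facts (uniqueness of the degree-one tangent foliation $\mathcal{G}_S$ for $d\geq2$; the extra affine tangent field $X$ with strictly upper triangular linear part for $d=1$, $\lambda\neq 0$), but your version makes the $d=1$, $\lambda\neq0$ case genuinely two-sided and explicit where the paper is terse, at the cost of needing the small verification that the plane $\langle S,X\rangle$ meets the adjoint orbit of $S$ in a set of dimension exactly one (finitely many admissible values of $a$ in $aS+bX$, by comparing eigenvalues modulo a common additive shift in $\mathfrak{pgl}$). Both arguments account for the possible second fibre component through the $\mathfrak{P}\mapsto\overline{\mathfrak{P}}$ symmetry of Proposition \ref{symmetry}, which is finite and harmless for the dimension count.
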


\begin{proof}
We compute the dimension $k$ of a generic fibre of the map (\ref{Phi}) $$\Phi: V_0 \times \aut(\mathbb{P}^n)  \DashedArrow[->,densely dashed    ] \overline{\mathcal{F}(\mathfrak{P},\lambda,d+1)}.$$ Then $\dim\left(\overline{\mathcal{F}(\mathfrak{P},\lambda,d+1)}\right)=\dim V_0 +\dim \aut(\mathbb{P}^n)-k=\dim V_0+n^2+2n-k$. We use the notation of the previous proposition. Given $\mathcal{F} \in \mathcal{F}(\mathfrak{P},\lambda,d+1)$ defined in homogeneous coordinates by $\Omega$, there is $\beta \in \mathcal{C}$ such that $\beta^{*}\Omega=\omega_{\overline{Y}}=\frac{1}{\tau}i_Si_{\overline{Y}}(\nu_n),[S,\overline{Y}]=\lambda.\overline{Y}$. We can assume that $\beta=\alpha_0$. 

For $Y \in W_0$, let $\Omega_Y$ be the homogeneous ($n-2$)-form of degree $d+2$ satisfying $\alpha_0^{*}\Omega_Y=\omega_Y$. 

\begin{claim}
The linear map $Y \in W_0 \mapsto \Omega_Y$ is injective.
\end{claim}
\begin{proof}
If $\Omega_Y=0$ then $\omega_Y$ also vanishes. So there exists a polynomial $f$ such that $Y=f.S$. Since $[S,Y]=\lambda.Y$, we have that $S(f)=\lambda.f$. This implies that $0=\dive(Y)=\tau.f$ and we get $f=0$. Therefore $Y=0$ and this finishes the proof.
\end{proof}

Note that $([Y],T) \in \Phi^{-1}(\mathcal{F})$ if and only if there exists $\xi \in \mathbb{C}^{*}$ such that $\alpha_0^{*}\left(T^{*}\Omega_Y\right)=\xi. \omega_{\overline{Y}}$. Taking instead of $T \mapsto t.T,t\in \mathbb{C}^{*}$, we can assume that $\xi=1$. Then 
\begin{equation}\label{I}
\alpha_{0}^{*}\Omega_Y=\omega_Y \text{ and }(T \circ \alpha_0)^{*}\Omega_Y=\omega_{\overline{Y}}.   
\end{equation}

If either $d \geq 2$ or $d=1$ and $\lambda=0$, by (\ref{I}) it follows that $S_{T \circ \alpha_0}=\pm S_{\alpha_0}$. By (1) of the previous proposition, if $S_{T \circ \alpha_0}=S_{\alpha_0}$ then $T$ is an element of the subgroup $D(n+1) \subset \mathrm{PGL}(n+1,\mathbb{C})$ of diagonal matrices. On the other hand, if $S_{T \circ \alpha_0}=- S_{\alpha_0}$ then $S=S_1$ and $T$ belongs to the subgroup $A(n+1) \subset \mathrm{PGL}(n+1,\mathbb{C})$ of secondary diagonal matrices. 

Assume that $T \in D(n+1)$ and set $\tilde{T}=\alpha_{0}^{-1}\circ T \circ \alpha_0$. From (\ref{I}) we have that $\tilde{T}^{*}\omega_Y=\omega_{\overline{Y}}$. Since $\tilde{T}^{*}S=S$ it follows from the Claim that $[Y]=[\tilde{T}_{*}\overline{Y}] \in V_0$. Thus $\Phi^{-1}(\mathcal{F})=\{(\tilde{T}_{*}\overline{Y},T) \mid T \in D(n+1)\}$ as long as $\mathfrak{P} \neq \overline{\mathfrak{P}}$ or $\lambda \neq \lambda_1$. If $d \geq 2 $, $\mathfrak{P} = \overline{\mathfrak{P}}$ and $\lambda=\lambda_1$, it follows from a similar argument that $\Phi^{-1}(\mathcal{F})$ has two irreducible components $$\Phi^{-1}(\mathcal{F})=\{(\tilde{T}_{*}\overline{Y},T) \mid T \in D(n+1)\} \cup \{(\hat{T}_{*}\overline{Y}_1,T) \mid T \in A(n+1)\},$$ where $\alpha_1 \in \mathcal{C}$ is given by $\alpha_1(x_1,\ldots,x_n)=(1:x_n:\cdots:x_1)$, $\hat{T}=\alpha_{0}^{-1}\circ T \circ \alpha_1$ and $\overline{Y}_1=\rot(\alpha_1^{*}\Omega)$. In any case $k=\dim D(n+1)=\dim A(n+1)=n$. If $d=1$ and $\lambda=0$, we have that $W_0=\{c_1x_1 \partial/\partial x_1+\cdots+c_nx_n \partial/\partial x_n\mid c_1+\cdots+c_n=0\}$, hence $\dim V_0=n-2$.

Finally, if $d=1$ and $\lambda \neq 0$ then the dimension $k$ of a generic fibre is one more. This implies the proposition.
\end{proof}

\begin{example}
For $S=\overline{\mathcal{F}(4,2,1;3,3)}$, $W_0=\{-2axyz \partial /\partial x+bxz \partial / \partial y+(ayz^2+c_1x+c_2y^2)\partial / \partial z \mid a,b,c_1,c_2 \in \mathbb{C}\}$. Hence $\dim S=3+3^2+3=15$. 
\end{example}

\section{Quasi-homogeneous singularities}
In this section, we recall a recent result concerning stability of quasi-homogeneous singularities.

\begin{definition}\label{dsqh}
Let $\omega$ be a germ of integrable $(n-2)$-form defined at $p \in \mathbb{C}^n$, with $p \in \sing(\mathcal{\omega})$ and $n \geq 3$. We say that $p \in \mathbb{C}^n$ is a \textit{quasi-homogeneous} singularity of $\omega$ if $p$ is an isolated singularity of $Y=\rot(\omega)$ and $DY(p)$ is nilpotent.
\end{definition}

The next theorem was recently proved (\cite{15}). A stronger version for the case $n=3$ was already known (\cite{12}).

\begin{thm}\label{tsqh}
Assume that $0 \in \mathbb{C}^n$ is a quasi-homogeneous singularity of $\omega$. Then there exists a holomorphic coordinate system $w=(w_1,\ldots,w_n)$ around $0 \in \mathbb{C}^n$ where $\omega$ has polynomial coefficients. More precisely, there exist two polynomial vector fields $Z$ and $Y$ in $\mathbb{C}^n$ such that
\begin{enumerate}[(a)]
\item
$Z=S+N$, where $S=\sum_{j=1}^{n}p_j w_j {\partial}/{\partial w_j}$ is linear semi-simple with eigenvalues $p_1,\ldots,p_n \in \mathbb{Z}_{>0}$, $DN(0)$ is linear nilpotent and $[S,N]=0$;
\item
$[N,Y]=0$ and $[S,Y]=\lambda.Y$, where $\lambda \in \mathbb{Z}_{>0}$. In other words, $Y$ is quasi-homogeneous with respect to $S$ with weight $\lambda$;
\item
In this coordinate system we have $\omega=\frac{1}{\lambda+tr(S)}i_Zi_Y (dw_1 \wedge \ldots \wedge dw_n)$ and $L_Y (\omega)=(\lambda+tr(S)) \omega$.
\end{enumerate}
\end{thm}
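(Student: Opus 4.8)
The plan is to transfer everything to the single vector field $Y=\rot(\omega)$ and to split the argument into a formal normalization, where integrability manufactures the quasi-homogeneous structure, and an analytic step, where that formal normal form is shown to converge. Since $\omega$ is recovered from its rotational together with the Euler field through a contraction, generalizing (\ref{eqrot}), it suffices to produce holomorphic coordinates and fields $S,N$ with the stated properties for which $Y$ becomes quasi-homogeneous; the formula for $\omega$ and the identities of item (c) are then formal consequences. So I would work solely with the germ of $Y$ at $0$, whose only hypotheses are that $0$ is an isolated zero and that $DY(0)$ is nilpotent.

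First I would build a formal quasi-homogeneous normal form for $Y$. Because $DY(0)$ is nilpotent, every eigenvalue vanishes, resonances are pervasive, and no linearization is possible; the whole point is that the integrability of $\omega$ (equivalently, that $Y$ is the rotational of an integrable form) forces the surviving resonant terms to line up along a single weight grading. Concretely, I would run a Poincar\'e--Dulac-type recursion, solving the homological equations order by order so as to conjugate $Y$ to a field on which a linear semisimple $S=\sum_j p_j w_j\,\partial/\partial w_j$ acts by $[S,Y]=\lambda Y$, the unavoidable resonant obstructions being absorbed into a commuting nilpotent $N$ with $[S,N]=0$ and $[N,Y]=0$. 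The isolated-singularity hypothesis is exactly what makes the scheme close up: it renders the relevant local algebra, and hence the cohomology controlling the homological equations, finite dimensional, and it pins down $p_1,\dots,p_n$ and $\lambda$ as positive integers through the constraints of Proposition~\ref{prop-qh-vf} and Remark~\ref{remark 1}.

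The hard part will be convergence: upgrading the formal conjugacy to a genuine holomorphic coordinate change $w=(w_1,\dots,w_n)$. This is the technical core of the theorem and the place where the dimension really enters; it is precisely what is established in \cite{15} for general $n$, the case $n=3$ being the sharper statement of \cite{12}. The mechanism I would invoke is the finite determinacy of an isolated quasi-homogeneous singularity: once $Y$ is known to be formally equivalent to a polynomial quasi-homogeneous model and the singularity is isolated, the equivalence can be realized analytically, either by an Artin/Malgrange approximation coupled with the rigidity of the model, or by direct convergence estimates. I single this step out as the main obstacle because, unlike the formal construction, it cannot be carried out by a termwise cohomological computation and genuinely requires analytic input.

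Finally, with $S$, $N$ and the polynomial $Y$ in hand, I would set $Z=S+N$ and recover $\omega$ from the relation $\omega=\frac{1}{\lambda+tr(S)}\,i_Z i_Y(\nu_n)$, which generalizes (\ref{eqrot}); one checks that the right-hand side has rotational $Y$ and the same quasi-homogeneity weight, hence coincides with $\omega$. The commutation relations of (a)--(b) and $\dive(Y)=0$ follow at once from the normal form, and the Euler-type scaling identity of (c) is then a direct Cartan computation: from $d\omega=i_Y(\nu_n)$, $[Z,Y]=[S,Y]=\lambda Y$, and $\dive(N)=0$ (which holds because $N$ has $S$-weight zero and nilpotent linear part), one obtains the homogeneity relation along the Euler field $Z$, namely $L_Z\omega=(\lambda+tr(S))\,\omega$.
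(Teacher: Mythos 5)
The paper does not actually prove this statement: Theorem \ref{tsqh} is imported verbatim from \cite{15} (with \cite{12} covering the sharper $n=3$ case), so there is no in-paper argument to compare against. Measured against the known proof --- whose ingredients the paper does use elsewhere, e.g.\ Corollary \ref{cor-qh} and the appeal to ``Proposition 1 of \cite{15}'' in the proof of Theorem A --- your proposal has a genuine structural gap: you run the Poincar\'e--Dulac scheme on $Y$ itself. Since $DY(0)$ is nilpotent, the homological operator $\mathrm{ad}_{DY(0)}$ is nilpotent rather than semisimple, every term is resonant, and no recursion on the germ of $Y$ alone can manufacture the weights $p_1,\dots,p_n$ and $\lambda$; indeed these data are not determined by $Y$ but by the pair $(\omega,Y)$. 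The workable route is the opposite one: because $0$ is an isolated zero of $Y$ and $n\ge 3$, the de Rham--Saito division lemma produces a second tangent vector field $X$ with $\omega=i_Xi_Y(\nu_n)$; the relation $d\omega=i_Y(\nu_n)$ then forces $\dive(Y)=0$ and a bracket identity between $X$ and $Y$ from which one deduces that the eigenvalues of $DX(0)$ are positive rational numbers (this is exactly the content of Corollary \ref{cor-qh}). Hence $DX(0)$ lies in the Poincar\'e domain, and it is $X$ --- not $Y$ --- that gets normalized to $Z=S+N$ by the classical Poincar\'e--Dulac theorem, where convergence is automatic. The ``hard analytic step'' you isolate (finite determinacy of $Y$, Artin--Malgrange approximation) is therefore not where the proof lives, and as described it would not get off the ground. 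The quasi-homogeneity of $Y$, i.e.\ $[S,Y]=\lambda Y$ with $\lambda\in\mathbb{Z}_{>0}$, is then read off from the commutation relation in the new coordinates, with positivity of $\lambda$ coming from Remark \ref{remark 1}.

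A secondary soft spot is the recovery of $\omega$ at the end: two $(n-2)$-forms with the same rotational differ by a closed form, so ``the right-hand side has rotational $Y$ and the same weight, hence coincides with $\omega$'' is not by itself an argument. One must instead carry the identity $\omega=i_Xi_Y(\nu_n)$ through the normalization and absorb the discrepancy between $X$ and $\frac{1}{\lambda+\mathrm{tr}(S)}Z$ (a functional multiple of $Y$, hence tangent to the foliation) into the contraction. On the positive side, your closing Cartan computation correctly yields $L_Z\omega=(\lambda+\mathrm{tr}(S))\,\omega$; note that item (c) as printed asserts this for $L_Y$, which would give $0$ since $i_Y\omega=0$ and $i_Yi_Y(\nu_n)=0$, so your version is the right one.
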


\begin{definition}\label{type}
In the situation of Theorem \ref{tsqh}, $S=\sum_{j=1}^{n}p_j w_j {\partial}/{\partial w_j}$ and $[S,Y]=\lambda Y$, we say that the quasi-homogeneous singularity is of type $(p_1,\ldots,p_n;\lambda)$.
\end{definition}

We are mainly interested in the following consequence of Theorem \ref{tsqh}.

\begin{cor}\label{cor-qh}
Assume that $\omega=i_{Z}i_{Y}(\nu_n), d\omega=i_{Y}(\nu_n)$, and $0 \in \mathbb{C}^n$ is a quasi-homogeneous singularity of the integrable ($n-2$)-form $\omega$. Then the eigenvalues of $DZ(0)$ are all positive rational numbers.
\end{cor}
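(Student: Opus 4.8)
The plan is to bring $\omega$ to the quasi-homogeneous normal form of Theorem \ref{tsqh} and then show that the spectrum of $DZ(0)$ is forced to coincide with that of the diagonal part of $S$, divided by the positive constant $c:=\lambda+\mathrm{tr}(S)$. First I would apply Theorem \ref{tsqh} to obtain coordinates $w$ in which $\omega=\tfrac{1}{c}\,i_{\mathcal{Z}}i_{\mathcal{Y}}(\nu_n)$, with $\mathcal{Z}=S+N$, $S=\sum_j p_j w_j\,\partial/\partial w_j$, $DN(0)$ nilpotent, $[S,N]=0$, $[N,\mathcal{Y}]=0$ and $[S,\mathcal{Y}]=\lambda\mathcal{Y}$, $\lambda>0$. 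Passing to these coordinates keeps the hypotheses in the form $\omega=i_Z i_Y(\nu_n)$, $d\omega=i_Y(\nu_n)$ with the \emph{same} $Z$, since the only effect of the change is that $Y=\rot(\omega)$ is rescaled by the nonvanishing Jacobian, a factor which cancels on both sides; and as the eigenvalues of $DZ$ at a zero of $Z$ are invariant under holomorphic coordinate changes, there is no loss in computing in the $w$-coordinates. Two bookkeeping facts will be used repeatedly. Since $[S,N]=0$, the function $\dive(N)$ is $S$-invariant, hence constant (all $p_j>0$), and as it equals $\mathrm{tr}\,DN(0)=0$ it vanishes identically; likewise $N(0)=0$. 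Since $[S,\mathcal{Y}]=\lambda\mathcal{Y}$ with $\lambda>0$, both $\mathcal{Y}$ and $\dive(\mathcal{Y})$ are quasi-homogeneous of strictly positive weight, so $\mathcal{Y}(0)=0$ and $\dive(\mathcal{Y})(0)=0$.

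Next I would pin down $Y=\rot(\omega)$. From the identity $d(i_{\mathcal{Z}}i_{\mathcal{Y}}\nu_n)=i_{[\mathcal{Z},\mathcal{Y}]}\nu_n+\dive(\mathcal{Z})\,i_{\mathcal{Y}}\nu_n-\dive(\mathcal{Y})\,i_{\mathcal{Z}}\nu_n$, together with $[\mathcal{Z},\mathcal{Y}]=\lambda\mathcal{Y}$ and $\dive(\mathcal{Z})=\mathrm{tr}(S)$ (because $\dive(N)\equiv 0$), I obtain $c\,d\omega=c\,i_{\mathcal{Y}}\nu_n-\dive(\mathcal{Y})\,i_{\mathcal{Z}}\nu_n$, that is $Y=\mathcal{Y}-\tfrac{1}{c}\dive(\mathcal{Y})\,\mathcal{Z}$. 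Using $i_{\mathcal{Z}}i_{\mathcal{Z}}\nu_n=0$ one checks directly that $i_{\mathcal{Z}/c}\,i_Y\nu_n=\omega$, so comparison with $\omega=i_Z i_Y\nu_n$ gives $i_{Z-\mathcal{Z}/c}\,i_Y\nu_n=0$; hence $Z-\mathcal{Z}/c=f\,Y$ on $\{Y\neq 0\}$ for a holomorphic $f$, which extends across the isolated zero $0$ of $Y$ by Hartogs. Since $\mathcal{Z}(0)=0$ and $Y(0)=0$ this shows $Z(0)=0$ and $DZ(0)=\tfrac{1}{c}\big(DS(0)+DN(0)\big)+f(0)\,DY(0)$; moreover $DY(0)=D\mathcal{Y}(0)$, because the correction $\tfrac{1}{c}\dive(\mathcal{Y})\,\mathcal{Z}$ vanishes to second order at $0$.

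The heart of the matter is the eigenvalue computation. Put $A=\tfrac{1}{c}DS(0)=\mathrm{diag}(p_1/c,\dots,p_n/c)$. Then $DN(0)$ is nilpotent and commutes with $A$ (from $[S,N]=0$), while linearizing $[S,\mathcal{Y}]=\lambda\mathcal{Y}$ yields $[DS(0),DY(0)]=-\lambda\,DY(0)$, so $DY(0)$ lies in the $\mathrm{ad}_A$-eigenspace for the eigenvalue $-\lambda/c\neq 0$. Writing $DZ(0)=A+\tfrac{1}{c}DN(0)+f(0)\,DY(0)$ and expanding $DZ(0)^k$ into monomials in $A$, $DN(0)$, $DY(0)$, each monomial is an $\mathrm{ad}_A$-eigenvector whose eigenvalue is $(-\lambda/c)$ times the number of factors $DY(0)$ it contains; since any matrix lying in a nonzero $\mathrm{ad}_A$-eigenspace is traceless, every monomial containing $DY(0)$ contributes $0$. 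Therefore $\mathrm{tr}\,DZ(0)^k=\mathrm{tr}\big(A+\tfrac{1}{c}DN(0)\big)^k=c^{-k}\sum_j p_j^k$ for all $k\geq 1$, the last equality because $A$ commutes with the nilpotent $\tfrac1c DN(0)$. Hence $DZ(0)$ and $\mathrm{diag}(p_1/c,\dots,p_n/c)$ have the same characteristic polynomial, and the eigenvalues of $DZ(0)$ are the positive rationals $p_1/c,\dots,p_n/c$.

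The main obstacle is that $Z$ is determined by $\omega$ only up to adding a multiple of $Y$, so a priori the nilpotent term $f(0)\,DY(0)$ could shift the spectrum of $DZ(0)$; the resolution is precisely the trace argument above, resting on the fact that $DY(0)$ sits in a nonzero $\mathrm{ad}_A$-eigenspace and is therefore invisible to all power-traces. The other points requiring care are the identity $\dive(N)\equiv 0$ and the resulting closed formula $Y=\mathcal{Y}-\tfrac1c\dive(\mathcal{Y})\,\mathcal{Z}$ for the rotational, which are what make the clean comparison $Z-\mathcal{Z}/c=fY$ available.
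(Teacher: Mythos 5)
The paper states this corollary without proof, presenting it as an immediate consequence of Theorem \ref{tsqh}; your argument is a correct and complete derivation along exactly that line (pass to the normalizing coordinates, compute $\rot(\omega)=\mathcal{Y}-\tfrac1c\dive(\mathcal{Y})\mathcal{Z}$, and identify $Z$ with $\mathcal{Z}/c$ modulo a multiple of $Y$). The one genuinely nontrivial point you had to supply — that $Z$ is determined by $\omega$ and $Y$ only up to adding $fY$, and that the resulting nilpotent perturbation $f(0)DY(0)$ cannot shift the spectrum because it lies in a nonzero $\mathrm{ad}_{DS(0)}$-eigenspace and is therefore invisible to all power traces — is handled correctly.
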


The main ingredient of the proof of Theorem \hyperref[b]{B} is Proposition \ref{prop-sing} below, which in turn is based on the following lemma (see Lemma 4.1 of \cite{15}).

\begin{lemma}\label{nilpotent.lemma}
Let $A$ and $L$ be linear vector fields on $\mathbb{C}^n$ such that $\left[L,A \right ] = \mu . A$, where $\mu \neq 0$. Then $A$ is nilpotent.
\end{lemma}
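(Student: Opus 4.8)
The plan is to pass from linear vector fields to their matrix representatives and reduce the statement to an elementary fact about the adjoint action. Write $\mathbf{A}$ and $\mathbf{L}$ for the $n \times n$ complex matrices representing $A$ and $L$, so that $A = \sum_{i,j}\mathbf{A}_{ij}x_j\,\partial/\partial x_i$ and similarly for $L$. A direct computation with the bracket of vector fields shows that $[L,A]$ is again linear, with matrix representative the commutator $\mathbf{A}\mathbf{L}-\mathbf{L}\mathbf{A}$. Hence the hypothesis $[L,A]=\mu\,A$ becomes the single matrix identity $[\mathbf{L},\mathbf{A}]=-\mu\,\mathbf{A}$; the precise sign is irrelevant, what matters is that $\mathbf{A}$ is an eigenvector of the operator $\mathrm{ad}_{\mathbf{L}}$ with nonzero eigenvalue $c:=-\mu$.

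Next I would exploit that $\mathrm{ad}_{\mathbf{L}}$ is a derivation of the matrix algebra. From $\mathrm{ad}_{\mathbf{L}}(\mathbf{A})=c\,\mathbf{A}$ and the Leibniz rule one gets, by an immediate induction, $\mathrm{ad}_{\mathbf{L}}(\mathbf{A}^k)=kc\,\mathbf{A}^k$ for every $k\ge 1$, that is $\mathbf{L}\mathbf{A}^k-\mathbf{A}^k\mathbf{L}=kc\,\mathbf{A}^k$. Taking the trace of both sides kills the left-hand side, since the trace of any commutator vanishes, leaving $kc\,\mathrm{tr}(\mathbf{A}^k)=0$. As $c\neq 0$ and the ground field is $\mathbb{C}$, this forces $\mathrm{tr}(\mathbf{A}^k)=0$ for all $k\ge 1$.

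Finally I would invoke the standard characteristic-zero criterion: a complex matrix all of whose power traces $\mathrm{tr}(\mathbf{A}^k)$, $k=1,\ldots,n$, vanish is nilpotent. Indeed, by Newton's identities the power sums of the eigenvalues determine the elementary symmetric functions, so vanishing of the former forces the characteristic polynomial of $\mathbf{A}$ to be $t^n$; thus every eigenvalue of $\mathbf{A}$ is zero and $\mathbf{A}$ is nilpotent. Equivalently, one could argue without traces: exponentiating $\mathrm{ad}_{\mathbf{L}}(\mathbf{A})=c\,\mathbf{A}$ gives $\exp(t\mathbf{L})\,\mathbf{A}\,\exp(-t\mathbf{L})=e^{ct}\mathbf{A}$ for all $t\in\mathbb{C}$, so $\mathbf{A}$ is conjugate to $e^{ct}\mathbf{A}$ and therefore its spectrum is invariant under multiplication by every $e^{ct}$; a finite set stable under all these rescalings can only be $\{0\}$, again yielding nilpotency.

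The computations here are entirely routine; the only point demanding a little care is the first step, namely getting the correspondence between the bracket of linear vector fields and the matrix commutator right (including the sign), after which the derivation-plus-trace argument is mechanical. The sole conceptual input is the characteristic-zero fact that vanishing power traces imply nilpotency, which fails in positive characteristic but is available here since we work over $\mathbb{C}$.
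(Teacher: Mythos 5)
Your proof is correct and complete. Note that the paper itself does not prove this lemma at all: it only cites Lemma 4.1 of \cite{15} (Lins Neto, \emph{Germs of complex two dimensional foliations}), so there is no internal argument to compare against. Your reduction to matrices is right (the bracket of linear vector fields corresponds to the matrix commutator up to a sign, which is immaterial since all that is used is that $\mathbf{A}$ is an $\mathrm{ad}_{\mathbf{L}}$-eigenvector with nonzero eigenvalue), the derivation identity $\mathrm{ad}_{\mathbf{L}}(\mathbf{A}^k)=kc\,\mathbf{A}^k$ plus vanishing of traces of commutators gives $\mathrm{tr}(\mathbf{A}^k)=0$ for all $k$, and Newton's identities in characteristic zero then force nilpotency; the alternative conjugation argument $\exp(t\mathbf{L})\,\mathbf{A}\,\exp(-t\mathbf{L})=e^{ct}\mathbf{A}$ is an equally valid shortcut, since a finite spectrum stable under multiplication by all of $\mathbb{C}^{*}$ must be $\{0\}$. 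Either version is a standard and fully self-contained proof of the cited fact.
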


\begin{prop}\label{prop-sing}
Suppose that $\mathcal{F} \in \mathcal{F}(\mathfrak{P},\lambda,d+1)$, where $\lambda \in \mathbb{Z}_{>0}$ and $d \geq 1$.
\begin{enumerate}[(a)]
\item
If the singularity $q_0 \in E_0$ is GK, then it is quasi-homogeneous;
\item
If $q_i \in E_i,i=2,3,\ldots,n$, is a non-Kupka GK singularity, then $\lambda=p_i(d-1)$.
\end{enumerate}
\end{prop}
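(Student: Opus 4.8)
The plan is to reduce both statements to a computation with the linear part of $Y_i=\rot(\omega_i)$ at the singular point, and then to invoke Lemma \ref{nilpotent.lemma} and Corollary \ref{cor-qh}. The common mechanism is the identity $[S_i,Y_i]=\lambda_i Y_i$ from Proposition \ref{charts}(a). If $q_i=0\in E_i$ is a singularity of $Y_i$, then $Y_i$ has no constant term, so its linear part $A_i:=DY_i(0)$ is exactly the degree-one homogeneous component of $Y_i$; comparing degree-one parts in $[S_i,Y_i]=\lambda_i Y_i$ and using that $S_i$ is linear yields $[S_i,A_i]=\lambda_i A_i$. Thus, whenever $\lambda_i\neq 0$, Lemma \ref{nilpotent.lemma} applies with $L=S_i$, $A=A_i$, $\mu=\lambda_i$, and shows $A_i$ is nilpotent. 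I also record the dictionary between the hypotheses and $Y_i$: since $d\omega_i=i_{Y_i}(\nu_n)$, we have $\sing(d\omega_i)=\sing(Y_i)$, so $q_i$ is non-Kupka exactly when $Y_i(q_i)=0$, and $q_i$ being GK means $\cod\sing(Y_i)\geq n$, i.e. $q_i$ is an isolated zero of $Y_i$ (if it is a zero at all).

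For part (a) I would first check that $q_0=0$ is automatically a non-Kupka singularity of $Y_0$. Because $\lambda\in\mathbb{Z}_{>0}$, Remark \ref{remark 1} gives $X(0)=0$; since $S(0)=0$ as well, the formula $Y_0=\tau X-\dive(X)S$ of Proposition \ref{charts}(a) yields $Y_0(0)=0$. Hence $0\in\sing(Y_0)$, and the GK hypothesis forces $0$ to be an isolated zero of $Y_0$. Finally $\lambda_0=\lambda\neq 0$, so the linearization argument above shows $A_0=DY_0(0)$ is nilpotent, and by Definition \ref{dsqh} the point $q_0$ is quasi-homogeneous.

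For part (b) I would argue by contradiction, assuming $\lambda\neq p_i(d-1)$, i.e. $\lambda_i\neq 0$. Non-Kupka gives $Y_i(0)=0$, and GK makes $0$ an isolated zero of $Y_i$; together with the nilpotency of $A_i=DY_i(0)$ coming from Lemma \ref{nilpotent.lemma}, Definition \ref{dsqh} tells us $q_i$ is a quasi-homogeneous singularity. I then write $\omega_i=i_{Z}i_{Y_i}(\nu_n)$ with $Z=\tfrac{1}{\tau_i}S_i$, which is legitimate because $\tau_i\,\omega_i=i_{S_i}i_{Y_i}(\nu_n)$, together with $d\omega_i=i_{Y_i}(\nu_n)$. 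Corollary \ref{cor-qh} then forces every eigenvalue of $DZ(0)=\tfrac{1}{\tau_i}S_i$ to be a positive rational number. But by Proposition \ref{charts}(b) the eigenvalues of $S_i$ include $\rho_1=p_1-p_i>0$ and $\rho_n=-p_i<0$, so $\rho_1/\tau_i$ and $\rho_n/\tau_i$ have opposite signs whatever the sign of $\tau_i$ is; this contradicts their positivity. Therefore $\lambda_i=0$, that is $\lambda=p_i(d-1)$.

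The step I expect to be the main obstacle is justifying that $\tau_i\neq 0$, which is needed to divide by $\tau_i$ when choosing $Z$. I would dispose of the case $\tau_i=0$ separately: then $Y_i=-\dive(X_i)S_i$ by Proposition \ref{charts}(a), and since $S_i$ is a nondegenerate linear field its only zero is the origin, so $\sing(Y_i)\supseteq\{\dive(X_i)=0\}$. If $\dive(X_i)$ is nonconstant this zero locus has codimension one, contradicting the GK condition $\cod\sing(Y_i)\geq n$; if $\dive(X_i)$ is a nonzero constant then $Y_i$ is a nonzero multiple of $S_i$, whence $[S_i,Y_i]=0$ forces $\lambda_i=0$, contrary to assumption. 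Thus $\tau_i\neq 0$ under the standing hypotheses and the argument closes. The conceptual point worth emphasizing is that the very same reduction yields quasi-homogeneity in (a) and a contradiction in (b): the difference lies entirely in the eigenvalues of $S_i$, which are all positive for $i=0$ but genuinely of both signs for $i\geq 2$ by Proposition \ref{charts}(b).
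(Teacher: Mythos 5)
Your argument is essentially the paper's: both parts rest on passing $[S_i,Y_i]=\lambda_i Y_i$ to the linear part $DY_i(0)$, invoking Lemma \ref{nilpotent.lemma}, and, for (b), contradicting Corollary \ref{cor-qh} through the mixed-sign eigenvalues of $S_i$ given by Proposition \ref{charts}(b); part (a) is handled identically to the paper (your derivation of $Y_0(0)=0$ from Remark \ref{remark 1} is in fact slightly more explicit than the original). The one place where your write-up is leaky is the subcase $\tau_i=0$. Your dichotomy omits $\dive(X_i)\equiv 0$ (then $Y_i\equiv 0$, so $d\omega_i\equiv 0$ and $\sing(d\omega_i)$ is everything, violating GK), and in the ``nonconstant'' branch the hypersurface $\{\dive(X_i)=0\}$ only contradicts the GK condition at $q_i$ --- which is a condition on the germ at $q_i$ --- if that hypersurface actually passes through $q_i$, which you do not verify. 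Both holes close at once by noting that $\dive(X_i)$ is quasi-homogeneous of weight $\lambda_i$ with respect to $S_i$, so $\lambda_i\neq 0$ forces $\dive(X_i)(0)=0$: the ``nonzero constant'' branch is then vacuous and the zero locus does contain the origin. Alternatively one can argue as the paper does: if $\dive(X_i)(0)=c\neq 0$ then $DY_i(0)=-c\cdot\mathrm{diag}(\rho_1,\ldots,\rho_n)$ is invertible, contradicting the nilpotency you have already established.
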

\begin{proof}
Assume that $\mathcal{F}$ is defined on $E_0$ by $\omega=i_{S}i_{X} (\nu_n)$. As $\lambda>0$ and $[S,Y]=\lambda.Y$, where $Y=\rot(\omega)$, it follows by Remark \ref{remark 1} that $0$ is an isolated singularity of $Y$. Also, since $[S,Y]=\lambda.Y$ we have that $[S,DY(0)]=\lambda. DY(0)$. Then (a) follows from Lemma \ref{nilpotent.lemma} with $L=S, A=DY(0), \mu=\lambda >0$.

For (b), suppose that there is some $i \in \{2,3,\ldots,n\}$ such that $q_i$ is a non-Kupka singularity and $\lambda \neq p_i(d-1)$, i.e., $\lambda_i=\lambda-p_i(d-1)\neq0$. By Proposition \ref{charts} (a), $\omega_i$ defines $\mathcal{F}$ on $E_i$ and $$\tau_i. \omega_i=i_{S_i} i_{Y_i} (\nu_n),[S_i,Y_i]=\lambda_i. Y_i,Y_i=\rot(\omega_i),$$ which implies $[S_i,DY_i (0)]=\lambda_i . DY_i (0)$. It follows from Lemma \ref{nilpotent.lemma}, with $L=S_i,A=DY_i(0),\mu=\lambda_i \neq 0$, that $DY_i(0)$ is nilpotent. 

If $\tau_i \neq 0$, from Corollary \ref{cor-qh} we get a contradiction, since by Proposition \ref{charts} (b) the eigenvalues of $\frac{S_i}{\tau_i}$ are not all positive. If $\tau_i=0$, there exists a polynomial $f$ such that $Y_i=f.S_i$. Set $l=f(0)$. Since $q_i$ is GK, we have that $l \neq 0$. Then we obtain a contradiction since $DY_i(0)$ is nilpotent.
\end{proof}

In the next result (\cite{15}, Theorem 3) we will consider the problem of deformation of two dimensional foliations with a quasi-homogeneous singularity. Consider a holomorphic family of $(n-2)$-forms, $(\omega_t)_{t \in U}$, defined on a polydisc $Q$ of $\mathbb{C}^n$, where the space of parameters $U$ is an open set of $\mathbb{C}^k$ with $0 \in U$. Let us assume that
\begin{itemize}
\item
For each $t \in U$ the form $\omega_t$ defines a two dimensional foliation $\mathcal{F}_t$ on $Q$. Let $\left(Y_t\right)_{t \in U}$ be the family of holomorphic vector fields on $Q$ such that $d\omega_t = i_{Y_t}(\nu_n)$;
\item
$0 \in \mathbb{C}^n$ is a quasi-homogeneous singularity of $\mathcal{F}_0$.
\end{itemize}

\begin{thm}\label{thm-est-sqh}
In the above situation there exist a neighbourhood $0 \in V \subset U$, a polydisc $0 \in P \subset Q$, and a holomorphic map $\mathcal{P}:V \rightarrow P \subset \mathbb{C}^n$ such that $\mathcal{P}(0)=0$ and for any $t \in V$ then $\mathcal{P}(t)$ is the unique quasi-homogeneous singularity of $\mathcal{F}_t$ in $P$. Moreover, $\mathcal{P}(t)$ is of the same type as $\mathcal{P}(0)$, in the sense that if $0$ is a quasi-homogeneous singularity of type $(p_1,\ldots,p_n;\lambda)$ of $\mathcal{F}_0$ then $\mathcal{P}(t)$ is a quasi-homogeneous singularity of type $(p_1,\ldots,p_n;\lambda)$ of $\mathcal{F}_t, \forall t \in V$. 
\end{thm}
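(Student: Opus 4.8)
The plan is to locate the deformed singularity as the non-degenerate zero of an auxiliary vector field, and then to recover nilpotency from a persisting affine-Lie-algebra relation. First I would put $\omega_0$ in the normal form of Theorem \ref{tsqh}: after a holomorphic change of coordinates there are polynomial fields $Z_0=S+N$ and $Y_0=\rot(\omega_0)$ with $\omega_0=\frac{1}{\lambda+tr(S)}i_{Z_0}i_{Y_0}(\nu_n)$, where $S$ is semisimple with eigenvalues $p_1,\dots,p_n\in\mathbb{Z}_{>0}$, $N$ is nilpotent with $[S,N]=0$, and $[S,Y_0]=\lambda Y_0$, $[N,Y_0]=0$. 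Two consequences drive the argument: first, $DZ_0(0)=S+DN(0)$ has eigenvalues $p_1,\dots,p_n$, all positive, so $Z_0$ has a \emph{non-degenerate} zero at $0$ even though $DY_0(0)$ is nilpotent; second, $[Z_0,Y_0]=[S,Y_0]+[N,Y_0]=\lambda Y_0$, so $(Z_0,Y_0)$ realizes $\mathfrak{aff}(\mathbb{C})$ with $\lambda\neq0$.

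The central step is to propagate this picture in $t$: I would construct a holomorphic family $(Z_t,\lambda_t)_{t\in V}$, with $Z_t$ tangent to $\mathcal{F}_t$ (that is $i_{Z_t}\omega_t=0$), $(Z_0,\lambda_0)=(Z_0,\lambda)$, and the bracket relation $[Z_t,Y_t]=\lambda_t Y_t$ preserved, where $Y_t=\rot(\omega_t)$ is the given family. Writing $Z_t=Z_0+W_t$, both conditions are linear in $W_t$ and hold at $t=0$, so the existence of the family reduces to an inhomogeneous linear problem whose solvability is governed by the cohomology of the operators $W\mapsto i_W\omega_0$ and $W\mapsto[W,Y_0]$ attached to the pair $(Z_0,Y_0)$. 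I expect this cohomological (or division) step to be the principal obstacle; the point is that the commuting semisimple field $S$ with positive integer weights decomposes every relevant space into finite-dimensional weight spaces and, via Corollary \ref{cor-qh}, forces the obstruction to vanish, yielding the family on a sufficiently small polydisc $P\subset Q$ and neighbourhood $V$ of $0$.

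Granting the family, since $DZ_0(0)$ is invertible the implicit function theorem produces a unique holomorphic $\mathcal{P}:V\to P$ with $\mathcal{P}(0)=0$, $Z_t(\mathcal{P}(t))=0$ and $DZ_t(\mathcal{P}(t))$ invertible. I would then show $\mathcal{P}(t)$ is a quasi-homogeneous singularity of $\mathcal{F}_t$ in the sense of Definition \ref{dsqh}. Evaluating $[Z_t,Y_t]=\lambda_t Y_t$ at $\mathcal{P}(t)$, where $Z_t$ vanishes, gives $DZ_t(\mathcal{P}(t))\cdot Y_t(\mathcal{P}(t))=\lambda_t Y_t(\mathcal{P}(t))$, so $Y_t(\mathcal{P}(t))$ lies in the $\lambda_t$-eigenspace of $DZ_t(\mathcal{P}(t))$; since the eigenvalues of the latter deform $p_1,\dots,p_n$ while $\lambda_t$ deforms $\lambda$, a weight comparison forces $Y_t(\mathcal{P}(t))=0$ after shrinking $V$. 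Thus $\mathcal{P}(t)$ is a zero of $Y_t$, and differentiating the bracket relation there yields $[DZ_t(\mathcal{P}(t)),DY_t(\mathcal{P}(t))]=\lambda_t DY_t(\mathcal{P}(t))$ with $\lambda_t\neq0$; Lemma \ref{nilpotent.lemma} then makes $DY_t(\mathcal{P}(t))$ nilpotent, and that $\mathcal{P}(t)$ is an isolated zero of $Y_t$ follows from conservation of the Milnor number at $\mathcal{P}(0)=0$.

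It remains to prove that the type is constant and that $\mathcal{P}(t)$ is the only quasi-homogeneous singularity in $P$. By Theorem \ref{tsqh} each $\mathcal{P}(t)$ carries a type $(p_1(t),\dots,p_n(t);\lambda(t))\in\mathbb{Z}_{>0}^n\times\mathbb{Z}_{>0}$, read off (after normalization) from the eigenvalues of $DZ_t(\mathcal{P}(t))$ and the weight $\lambda_t$; these vary continuously with $t$ and take integer values, hence are locally constant and equal to $(p_1,\dots,p_n;\lambda)$, which is exactly Definition \ref{type}. For uniqueness, any other quasi-homogeneous singularity in $P$ would, by the same normal form, be a non-degenerate zero of a tangent field contributing to the local multiplicity that accumulates at $0$; shrinking $P$ and $V$ so that this multiplicity equals that of $\mathcal{P}(0)$ leaves $\mathcal{P}(t)$ as the only such point.
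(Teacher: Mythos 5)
First, a remark on the comparison you asked for: the paper does not prove Theorem \ref{thm-est-sqh} at all --- it is imported verbatim as Theorem 3 of \cite{15} --- so there is no internal proof to measure your argument against, and I can only assess the proposal on its own terms. On those terms, the central step of your plan is missing. Producing a holomorphic family $(Z_t,\lambda_t)$ with $i_{Z_t}\omega_t=0$ and $[Z_t,Y_t]=\lambda_t Y_t$ deforming $(Z_0,\lambda)$ is essentially the whole content of the stability theorem, and you do not establish it: you reduce it to ``an inhomogeneous linear problem whose solvability is governed by the cohomology'' of two operators, and then assert that the weight decomposition induced by $S$ ``forces the obstruction to vanish,'' invoking Corollary \ref{cor-qh}. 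That corollary says nothing about obstructions --- it only reads off the signs of the eigenvalues of $DZ(0)$ once the quasi-homogeneous structure is already in place. No obstruction space is identified, no division or splitting lemma is stated, and no reason is given why the relevant cohomology vanishes. Since everything downstream (the implicit function theorem defining $\mathcal{P}(t)$, the persistence of the bracket relation, the application of Lemma \ref{nilpotent.lemma} to get nilpotency of $DY_t(\mathcal{P}(t))$) hinges on this family, the proposal in effect assumes the theorem in order to prove it.

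Even granting the family $Z_t$, two later steps do not go through as written. To conclude $Y_t(\mathcal{P}(t))=0$ you place $Y_t(\mathcal{P}(t))$ in the $\lambda_t$-eigenspace of $DZ_t(\mathcal{P}(t))$ and compare $\lambda_t\approx\lambda$ with the eigenvalues $\approx p_1,\dots,p_n$; this only works when $\lambda\notin\{p_1,\dots,p_n\}$, and that can fail (the type $(4,3,2;4)$ occurs among the components in Corollary \ref{corsomecomponents}). The uniqueness claim is also not an argument: by Proposition \ref{prop-qh-vf} (d) the zeros of $Y_t$ in $P$ have total multiplicity $m(Y_0,0)=\prod_j(p_j+\lambda)/\prod_j p_j>1$, so for $t\neq 0$ there are in general several zeros of $Y_t$ in $P$ besides $\mathcal{P}(t)$; ``conservation of the local multiplicity'' does not exclude that one of them is an isolated zero with nilpotent linear part, i.e.\ quasi-homogeneous in the sense of Definition \ref{dsqh}, possibly of a different type, and you give no mechanism to rule this out. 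Finally, constancy of the type is asserted by continuity of integer-valued data, but the weights attached to $\mathcal{P}(t)$ come from applying Theorem \ref{tsqh} afresh at each $t$ after a $t$-dependent normalization; their continuity in $t$ is exactly what needs proof and is again supplied only by the unestablished deformation step.
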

\begin{proof}[Proof of Theorem A] 
Let $\mathcal{F} \in \mathcal{F}(\mathfrak{P},\lambda,d+1)$ be the required WGK foliation. By Proposition \ref{prop-sing} (a), $q_0$ is a quasi-homogeneous singularity of $\mathcal{F}$. 

Let $(\mathcal{F}_t)_{t \in \Sigma}$ be a holomorphic family of foliations in $\mathscr{F}_2(d+1,n)$, parameterized in a open set $0 \in \Sigma \subset \mathbb{C}$, where $\mathcal{F}_0=\mathcal{F}$, and $(\Omega_t)_{t \in \Sigma}$ a holomorphic family of respective homogeneous $(n-2)$-form on $\mathbb{C}^{n+1}$ that defines $\mathcal{F}_t$. It suffices to prove that $\mathcal{F}_t \in \mathcal{F}(\mathfrak{P},\lambda,d+1)$ for small $\abs{t}$.

Next we show that $\mathcal{F}_t$ is WGK for small $\abs{t}$. Define $\omega_{i,t}=\Omega_t|_{E_i},i=0,\ldots,n$, where $E_0,\ldots,E_n$ are defined as in Proposition \ref{charts}. Set $$\mathcal{S}_{i,t}=\{[z] \in E_i \mid \omega_{i,t}(z)=0\} \text{ and } \mathcal{T}_{i,t}=\{[z] \in E_i \mid d\omega_{i,t}(z)=0\}.$$ 

Denote by $\mathcal{Q}_{i,t}$ and $\mathcal{R}_{i,t}$ the union of the components of codimension $\geq 3$ and the union of the components of codimension $\leq 2$ of $\mathcal{T}_{i,t}$, respectively. By definition, $\mathcal{F}_t$ is WGK on $E_i$ means that $\mathcal{S}_{i,t} \cap \mathcal{R}_{i,t} = \emptyset$. 

For each $p \in \mathbb{P}^n$, take an open set $V_p \subset \mathbb{P}^n$ with compact closure such that $p \in V_p \subset \overline{V_p} \subset E_i$, for some $i=i(p) \in \{0,\ldots,n\}$. As $\mathcal{F}_0$ is WGK, there exists $\epsilon_p >0$ such that $\mathcal{S}_{i,t} \cap \mathcal{R}_{i,t} \cap \overline{V_p} = \emptyset$ if $\abs{t} < \epsilon_p$. By the compactness of $\mathbb{P}^n$, we can assume that there exist a finite number of points $p_1,\ldots,p_m$ such that $$\mathbb{P}^n=\bigcup_{j=1}^{m} V_{p_j}.$$ Then $\mathcal{F}_t$ is WGK, if $\abs{t} < \epsilon$, where $\epsilon = \displaystyle\min_{j\in \{1,\ldots,m\}} \epsilon_{p_j}$.

Hereafter, the proof of Theorem \hyperref[a]{A} is close to that of Theorem \ref{thefour} that can be found in \cite{1}, if we take into account the following three observations.
\begin{enumerate}
\item
As in the case of GK singularities, if $p_0$ is a WGK singularity of a germ of foliation $\mathcal{G}$ defined by the integrable ($n-2$)-form $\eta$, the sheaf of germs of vector fields at $p_0$ tangent to $\mathcal{G}$ is locally free and has two generators. Indeed, let $z=(z_1,\ldots,z_n)$, $z(p_0)=0$, be a coordinate system around $p_0$, and $Y=\rot(\eta)$. It suffices to show that there exists a holomorphic vector field $X$ such that $\eta=i_Xi_Y (dz_1 \wedge \cdots \wedge dz_n)$ (see the proof of Corollary 2 of \cite{1}). On the other hand, as $\cod(\sing(Y)) \geq 3$, the latter is a consequence of Proposition 1 of \cite{15}.
\item The tangent sheaf of $\mathcal{F}$ splits as $\mathcal{T}\mathcal{F}=\mathcal{O} \oplus \mathcal{O}(1-d)$.
\item
In the proof of Theorem \hyperref[a]{A}, Theorem \ref{thm-est-sqh} plays the same role that Proposition 1 of \cite{1} does in the proof of Theorem \ref{thefour}.
\end{enumerate}

\end{proof}

\section{Depicting GK components}
In this section we prove Theorems \hyperref[b]{B}, \hyperref[thmB1]{B.1} and \hyperref[thmB2]{B.2}.
\begin{proof}[Proof of Theorem B]
If $\overline{\mathcal{F}(\mathfrak{L},\mu,d+1)}$ is a GK component of $\mathscr{F}_2(d+1,n)$, we begin by showing that there exist $p_1,\ldots,p_n,\lambda$ satisfying the conditions of Theorem \hyperref[b]{B} such that $\overline{\mathcal{F}(\mathfrak{L},\mu,d+1)}=\overline{\mathcal{F}(\mathfrak{P},\lambda,d+1)}$. In this case, by Proposition \ref{symmetry}, either $\mathfrak{L}=\mathfrak{P}$, $\mu=\lambda$ or $\mathfrak{L}=\overline{\mathfrak{P}}$, $\mu=\lambda_1$.

The main idea of the proof is to look at the singularities of $S$. This is obviously observed in part (a) of Theorem \hyperref[b]{B}. The part (b) of the theorem relates to the fact that the singularities $q_2,q_3,\ldots,q_n$ are also GK singularities of $\mathcal{F} \in \mathcal{F}(\mathfrak{P},\lambda,d+1)$.

We consider that a GK foliation $\mathcal{F} \in \mathcal{F}(\mathfrak{P},\lambda,d+1)$ is equipped with the parameters of Proposition \ref{charts}. By Proposition \ref{prop-sing} (b), if $d \geq 2$, as $p_1,\ldots,p_n$ are pairwise distinct, the singularities $q_2,q_3,\ldots,q_n$ are Kupka, with at most one exception. If $d=1$, since $\lambda_i=\lambda-p_i(d-1)=\lambda \neq 0$, $q_2,\ldots,q_n$ are Kupka singularities of $\mathcal{F}$ (the case where $d=1$ and $\lambda=0$ will be treated in Corollary \ref{gen-klein-lie}). 

We have the following useful observations easily verified by the reader.
\begin{enumerate}[i)]
\item
For each $i \in \{2,\ldots,n\}$, if the $j$-th entry of $Y_i(0)$ is not $0$, then it follows from $[S_i,Y_i]=\lambda_i.Y_i$ and Proposition \ref{closerlook} (c) that condition $c_{i-1j}$ is satisfied.
\item
If $c_{ij}$ and $c_{i_1 j_1}$ hold at the same time, $1 \leq i,i_1 \leq n-1$, $1 \leq j,j_1 \leq n$, then $i_1>i$ implies that $j_1 > j$.
\item
Denote by $\overline{c}_{ij}$ the same condition as $c_{ij}$ substituting $p_k$ by $\overline{p}_k$ and $\lambda$ by $\lambda_1,i=1,\ldots,n-1,j=1,\ldots,n$. It follows that 
\begin{center}
$c_{ij}$ holds $\iff \overline{c}_{n-i,n-j+1}$ holds, $i=1,\ldots,n-1,j=1,\ldots,n$.
\end{center}
\item
Assume that $\lambda=p_i(d-1)$, for some $i \in \{2,\ldots,n\}$. If $c_{i_1j_1}$ holds, it follows that $i+1 \leq j_1  \leq n$, if $i_1 > i-1$, and $1 \leq j_1 \leq i-2$, if $i_1 <i-1$.
\end{enumerate}

It is clear that the condition of Theorem \hyperref[b]{B} (a) must be satisfied. Suppose first that the singularities $q_2,\ldots,q_n$ are all of Kupka type. Thanks to i) and ii), $p_1,\ldots,p_n,\lambda,d$ must satisfy the conditions $$c_{11},c_{22},\ldots,c_{ii},c_{i+1,i+2},c_{i+2,i+3},\ldots,c_{n-1,n},$$ where $0 \leq i \leq n-1$. From Proposition \ref{symmetry} and iii), we can assume that $0 \leq i \leq \left\lfloor \frac{n-1}{2} \right\rfloor$. In addition, for each $j \in \{2,\ldots,n\}$, since $\tau_j.\omega_j=i_{S_j}i_{Y_j}(\nu_n)$ and $Y_j(0) \neq 0$, we have that $\tau_j \neq 0$. Thus we are in the situation of Theorem \hyperref[b]{B} (b.1).

Now suppose that the singularities $q_2,\ldots,q_n$ are of Kupka type, except $q_i$. Thanks to i), ii), iv) and Proposition \ref{prop-sing} (b), $p_1,\ldots,p_n,\lambda,d$ must satisfy $$c_{11},c_{22},\ldots,c_{i-2,i-2},\lambda=p_i(d-1),c_{i,i+1},c_{i+1,i+2},\ldots,c_{n-1,n}.$$ From Proposition \ref{symmetry}, iii) and the equivalence $\lambda=p_i(d-1) \iff \lambda_1=\overline{p}_{n+2-i} (d-1)$, we can assume that $2 \leq i \leq \lfloor \frac{n+2}{2} \rfloor$. As in the previous case we have $\tau_j \neq 0, j \in \{2,\ldots,n\} \setminus \{i\}$. Thus we are in the situation of Theorem \hyperref[b]{B} (b.2).

Therefore the conditions of Theorem \hyperref[b]{B} are needed to the existence of GK foliations in $\mathcal{F}(\mathfrak{P},\lambda,d+1)$. Next we show that the conditions of Theorem \hyperref[b]{B} are also sufficient. The proof follows immediately from the next two lemmas.

\begin{lemma}\label{reduction}
A foliation $\mathcal{F} \in \mathcal{F}(\mathfrak{P},\lambda,d+1)$ is GK if and only if the singularities $q_0,q_2,q_3,\ldots,q_n$ of $\mathcal{F}$ are GK.
\end{lemma}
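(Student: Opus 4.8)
The plan is to prove Lemma \ref{reduction} by reducing the global GK condition on $\mathcal{F}$ to a finite local check, exploiting that the only candidate non-GK points are forced to lie among the singularities $q_0, q_1, \ldots, q_n$ of the diagonal field $S$. The forward implication is trivial by definition, so the content is the converse: assuming $q_0, q_2, \ldots, q_n$ are GK, I must show every singularity of $\mathcal{F}$ is GK. First I would recall that $\sing(\mathcal{F}) = \sing(\omega)$ and that the GK condition at a point $p$ means $\cod_p(\sing(d\omega)) \geq n$, i.e. $p$ is an isolated singularity of $Y = \rot(\omega)$ with $DY(p)$ having the right structure. The key structural fact is that $\sing(\mathcal{F})$ is $S$-invariant: since $\omega = \tfrac{1}{\tau} i_S i_Y(\nu_n)$ with $[S,Y] = \lambda Y$, the singular set is a union of orbits of the flow of $S$ (this is exactly the mechanism behind Proposition \ref{prop-qh-vf}(b) for $\ld(S,X)$).

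The main step is the following reduction argument. I would consider the closure $\overline{\mathcal{O}}$ of any $S$-orbit contained in $\sing(\mathcal{F})$ and analyze its limit points under the $\mathbb{C}^*$-action $t \mapsto (t^{p_1}z_1, \ldots, t^{p_n}z_n)$ extended to $\mathbb{P}^n$. Because all the $p_j$ are positive and distinct, every orbit of this action on $\mathbb{P}^n$ has both a source and a sink among the fixed points $q_0, q_1, \ldots, q_n$ (the fixed points of the diagonal torus action); as $t \to 0$ and $t \to \infty$, any point of $\sing(\mathcal{F})$ flows to one of these $q_i$. Since being GK is a closed condition that propagates along $S$-orbits — if one point of an orbit is GK (equivalently WGK with the codimension bound), so is every point of the orbit, and the property extends to the limit points by the invariance of $d\omega$ under the $S$-flow up to scale — controlling the GK property at the fixed points controls it on the orbit closures. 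Thus if I can guarantee that every singular point flows into some GK fixed point, I am done. The remaining subtlety is the fixed point $q_1 \in E_1$: I would note that in the chart $E_1$ the field $S$ becomes $-S_1$ with $[S_1, X_1] = \lambda_1 X_1$ and $\lambda_1 = p_1(d-1) - \lambda$, and I must verify $q_1$ is automatically GK (or carries no singularities that fail the condition) rather than needing a separate hypothesis. This is where I would invoke the divergence-free statement $\dive(Y_1) \equiv 0$ from Proposition \ref{charts}(a) (valid since $\tau_1 \neq 0$), which forces the local behavior of $d\omega$ at $q_1$ to match that already handled by the degree-normalization in Lemma \ref{lemmadeg}.

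I expect the main obstacle to be making rigorous the claim that the WGK/GK codimension estimate at the torus-fixed points propagates to control $\sing(d\omega)$ along the whole orbit closure, and in particular ruling out that a bad component of $\sing(d\omega)$ of codimension $\leq n-1$ could appear transversally to the orbit directions while meeting $\sing(\omega)$ only at interior orbit points whose limits are the good fixed points. To handle this I would argue componentwise: any irreducible component $C$ of $\sing(d\omega)$ is $S$-invariant, hence $\overline{C}$ contains at least one fixed point $q_i$; if $C$ had codimension too small to satisfy the GK bound and met $\sing(\omega)$, then $q_i \in \overline{C} \cap \sing(\omega)$ would witness a failure of the GK condition at $q_i$, contradicting the hypothesis. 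The genuinely delicate point is verifying that $q_1$ never needs to appear in the hypothesis list, which I would settle by the explicit chart computation above together with the observation that $\tau_1 \neq 0$ prevents $Y_1$ from degenerating into a multiple of $S_1$. Once these are in place the equivalence follows, and the sufficiency half of Theorem \hyperref[b]{B} reduces, via this lemma, to the local analysis of the finitely many fixed points carried out in Proposition \ref{prop-sing}.
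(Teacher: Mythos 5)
Your overall strategy coincides with the paper's: $\sing(d\omega)=\sing(Y)$ is a union of orbits of $S$ (because $[S,Y]=\lambda Y$), every non-trivial orbit accumulates at two \emph{distinct} fixed points among $q_0,\ldots,q_n$, and a positive-dimensional component of $\sing(d\omega)$ through a non-GK singularity therefore transports the failure of the GK condition to one of the hypothesis points. Your componentwise paragraph is essentially the paper's argument for a non-GK singularity $p\neq q_1$: such a $p$ lies on a positive-dimensional, $S$-invariant germ of $\sing(d\omega)$, whose closure contains some $q_i$ with $i\neq 1$ (at most one of the two limit fixed points of an orbit can be $q_1$), and the resulting positive-dimensional germ of $\sing(d\omega)$ at $q_i$ contradicts $q_i$ being GK.

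The gap is in your treatment of $q_1$. You propose to show that $q_1$ is ``automatically GK'' by invoking $\dive(Y_1)\equiv 0$ and the degree normalization of Lemma \ref{lemmadeg}; this step would fail, since the divergence-free property of $Y_1$ says nothing about the codimension of $\sing(Y_1)$ at the origin of $E_1$, which is what the GK condition measures. Relatedly, your sentence asserting that GK-ness ``extends to the limit points'' has the propagation backwards: GK-ness at interior orbit points does not pass to the limit fixed points (new components of $\sing(d\omega)$ may pass through them); only the \emph{failure} of GK propagates along an orbit and into its closure. The correct disposal of $q_1$ --- which is what the paper does, and which is already within reach of your own source-and-sink observation --- is: if $q_1$ were not GK, the positive-dimensional $S$-invariant component of $\sing(Y_1)$ through $q_1$ would contain non-fixed points, each of which is a non-GK singularity of $\mathcal{F}$ on $E_1$ distinct from $q_1$ (here $\tau_1\neq 0$ gives $\sing(Y_1)\subset\sing(\omega_1)$, so these points are indeed singularities of $\mathcal{F}$); these are then handled by the first case. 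So $q_1$ is not shown to be GK directly; its possible failure is reduced to a failure at some $q_i$, $i\neq 1$, covered by the hypothesis.
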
 
\begin{proof}
Of course, if $\mathcal{F}$ is GK then the singularities $q_0,q_2,\ldots,q_n$ are GK. Conversely, assume that they are GK singularities of $\mathcal{F}$. Suppose that there exists a singularity $p$ that is not GK.

Assume that $p \neq q_1$. The orbit of the global vector field $S$ through any point $z \not\in \sing(S)$ accumulates at two points of $\sing(S)$, say $q_i$, $q_j$, $i \neq j$ and $i \neq 1$. Since $[S_i,Y_i]=\lambda_i .Y_i$, it follows from Proposition \ref{prop-qh-vf} (b) that the orbit of $S$ through $p$ is contained in $\sing(Y_i)$. We obtain a contradiction, since $q_i$ is GK. 

Next, suppose that $p=q_1$. It is not difficult to see that there is a non-GK singularity of $\mathcal{F}$ on $E_1$ other than $q_1$. Once again this contradicts $q_0,q_2,\ldots,q_n$ being GK. 
\end{proof}

Recall the parameters $W_0$, $V_0$, $\omega_Y$ introduced before Proposition \ref{irreducibility}. For $Y \in W_0$, set $Y_i,i=1,\ldots,n$, as in Proposition \ref{charts} (computed in the same way as $\omega_0=\omega_Y$ provides a foliation of degree $d+1$).

\begin{lemma}
Under any situation of Theorem \hyperref[b]{B}, there exists a proper algebraic subset $\Delta \subset V_0$ such that if $[Y] \in V_0 \setminus \Delta$, then the singularities $q_0,q_2,q_3,\ldots,q_n$ of the foliation $\mathcal{F}(S,Y) \in \mathcal{F}(\mathfrak{P},\lambda,d+1)$ defined by $\omega_Y$ on $E_0$ are GK. 
\end{lemma}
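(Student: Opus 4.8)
The plan is to exhibit, for each of the arithmetic situations b.1) and b.2), a single explicit foliation $\mathcal{F}(S,Y_*) \in \mathcal{F}(\mathfrak{P},\lambda,d+1)$ whose singularities $q_0,q_2,\ldots,q_n$ are all GK, and then invoke the fact that the GK condition at each of these finitely many points is \emph{Zariski open} in the parameter $[Y] \in V_0$. Indeed, being GK at a given $q_i$ means $\cod(\sing(d\omega_i)) \geq n$ on $E_i$; the locus of $[Y]$ where this fails (i.e.\ where some extra component of positive dimension appears in $\sing(Y_i)$, or where the relevant determinantal/resultant condition degenerates) is a closed algebraic subset $\Delta_i \subsetneq V_0$, \emph{provided} it is proper, and that properness is exactly what a single witness foliation guarantees. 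Thus I would set $\Delta = \Delta_0 \cup \Delta_2 \cup \cdots \cup \Delta_n$ and the lemma follows once each $\Delta_i$ is shown to be proper.

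First I would make the openness precise. For each $i \in \{0,2,\ldots,n\}$, the coefficients of $Y_i = \rot(\omega_i)$ (equivalently of $Y_{i,*}$ read off in the chart $E_i$ via Proposition \ref{charts}) depend linearly — hence polynomially — on the coefficients of $Y \in W_0$. The GK condition at $q_i$ is the requirement that $0$ be an isolated singularity of $Y_i$ with the appropriate nondegeneracy (for $q_0$, that $0$ is an isolated, necessarily quasi-homogeneous singularity by Proposition \ref{prop-sing}(a); for $q_2,\ldots,q_n$, that $q_i$ is Kupka or, in the exceptional case of b.2), the unique admissible non-Kupka GK point). In each case the failure locus is cut out by the vanishing of suitable minors or resultants in the coefficients of $Y$, so $\Delta_i$ is a genuine algebraic subset of $V_0$. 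The only thing preventing $\Delta_i$ from being proper is the possibility that \emph{no} $Y$ makes $q_i$ a GK singularity; this is excluded precisely by the arithmetic conditions $c_{ij}$ (and the $\tau_j \neq 0$ conditions) assumed in b.1) and b.2).

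The heart of the argument is therefore the construction of the witness. Using Proposition \ref{closerlook}(c), the admissible monomials of $Y_i$ are exactly those $z^\sigma \partial/\partial z_j$ with $\sum_k \rho_k \sigma_k = \rho_j + \lambda_i$, where the $\rho_k$ are the eigenvalues of $S_i$ listed in Proposition \ref{charts}(b). Under the conditions $c_{i-1,j}$ (translated into the $E_i$ chart via observation i) in the main proof), the resonance allowing a nonzero constant term $Y_i(0)$ in the $j$-th slot is present exactly when one of the conditions $c_{i-1,j}$ holds, and observations ii) and iv) pin down which slots these are, guaranteeing that a generic choice of the corresponding coefficients makes $Y_i(q_i) \neq 0$ (Kupka) or, in the b.2) case, makes $DY_i(q_i)$ have the correct rank for a non-Kupka GK point while $\tau_i = 0$ forces $Y_i = f\cdot S_i$ with $f(0) \neq 0$. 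Simultaneously the condition of Theorem \hyperref[b]{B}(a) furnishes a $Y$ for which $q_0$ is GK. One then checks that these finitely many open conditions can be met simultaneously — since each is the complement of a proper subvariety and $V_0$ is irreducible, a nonempty finite intersection of nonempty opens is nonempty, so a common witness exists.

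The main obstacle I expect is the bookkeeping that converts the global arithmetic relations $c_{ij}$, $\tau_j \neq 0$, and $\lambda = p_i(d-1)$ into the concrete statement that each individual $\Delta_i$ is proper — that is, verifying that the resonance dictated by b.1)/b.2) really does produce an admissible monomial realizing the desired (non)vanishing at $q_i$, and that no hidden incompatibility forces two of the failure loci to cover all of $V_0$. Observations i)–iv) from the proof of Theorem \hyperref[b]{B} do most of this work, reducing the matter to a chart-by-chart check; the delicate point is the single exceptional non-Kupka chart in case b.2), where one must separately confirm via Lemma \ref{nilpotent.lemma} and Corollary \ref{cor-qh} (as in Proposition \ref{prop-sing}) that the quasi-homogeneous/Kupka dichotomy leaves room for a genuine GK witness rather than forcing degeneracy. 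Once properness of each $\Delta_i$ is established, taking $\Delta = \bigcup_i \Delta_i$ completes the proof.
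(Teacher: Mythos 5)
Your overall strategy coincides with the paper's: decompose the bad set into failure loci attached to each of the points $q_0,q_2,\ldots,q_n$, observe that each locus is algebraic because the coefficients of $Y_i$ depend polynomially on $[Y]\in V_0$, establish properness of each locus by exhibiting a witness, and conclude by irreducibility of $V_0=\mathbb{P}(W_0)$. Your treatment of the Kupka charts (properness of the loci $\{Y_j(q_j)=0\}$ via the resonances forced by the conditions $c_{i-1,j}$) and of $q_0$ (via Theorem \hyperref[b]{B} (a) together with Proposition \ref{prop-sing}) matches the paper. However, there are two concrete gaps. First, you never exclude the locus $\Gamma$ of those $[Y]$ for which $\omega_Y$ fails to define a foliation of degree $d+1$ on $\mathbb{P}^n$ (e.g.\ because $\sing(\omega_Y)$ acquires a divisorial component); without removing $\Gamma$ the assertion ``$\mathcal{F}(S,Y)\in\mathcal{F}(\mathfrak{P},\lambda,d+1)$'' in the lemma is not even meaningful for all $[Y]$ outside your $\Delta$. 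The paper proves $\Gamma$ is proper by producing the explicit witness $\overline{X}=x_n^d\cdot R_n+x_{n-1}^d\,\partial/\partial x_n$, which satisfies $[S,\overline{X}]=\lambda\overline{X}$ because $c_{n-1,n}$ and (in all but a few cases) $c_{n-2,n-1}$ hold, and whose associated $(n-2)$-form has no divisorial singular components.

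Second, and more seriously, the properness of the failure locus at the exceptional non-Kupka point $q_i$ in case (b.2) --- the set $L_i=\{[Y]\mid \det(DY_i(q_i))=0\}$ --- is exactly the step you defer as ``delicate'' without supplying an argument. The tools you cite there (Lemma \ref{nilpotent.lemma} and Corollary \ref{cor-qh}, as in Proposition \ref{prop-sing}) only yield \emph{necessary} conditions on a GK singularity; they do not produce a $Y\in W_0$ with $\det(DY_i(q_i))\neq 0$, and a ``generic choice of admissible monomial coefficients'' is not obviously sufficient here since one must stay inside the constraint $[S_i,X_i]=\lambda_i X_i=0$. The paper closes this by a perturbation: replace $X_i$ by $\tilde{X}_i=X_i+\epsilon\sum_k\epsilon_k x_k\,\partial/\partial x_k$ with the $\epsilon_k$ non-real and $\sum_k\epsilon_k\in\mathbb{Z}\setminus\{0\}$, so that $[S_i,\tilde{X}_i]=0$ is preserved, and note that $\tilde{Y}_i=Y_i+\epsilon Z$ with $Z$ diagonal and $\det(DZ(q_i))\neq 0$, whence $\det(D\tilde{Y}_i(q_i))\neq 0$ for $\abs{\epsilon}$ large. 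Supplying this construction (and the $\Gamma$ witness) would complete your proof along the same lines as the paper's.
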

\begin{proof}
Consider the following subsets of $V_0$, where $i$ is given by Theorem B.
\begin{align*}
\Gamma&=\{[Y] \in V_0 \mid \omega_Y \text{ does not define a foliation of degree } d+1 \text{ on } \mathbb{P}^n \},\\ 
\Sigma&= \{ [Y] \in V_0 \mid q_0 \text{ is a non-isolated singularity of } Y \}, \\
L_i&=\{[Y] \in V_0 \mid \det(DY_i(q_i))=0\},\\
H_j&=\{[Y] \in V_0 \mid Y_j(q_j)=0\},j=2,3,\ldots,n. 
\end{align*}

We proceed in the following way. We start by observing that $\Gamma$, $\Sigma$, $L_i$ and the $H_j$'s are algebraic subsets of $V_0$. Note that if $Y_i(q_i)=0$ and $[Y] \notin L_i$, then $q_i$ is an isolated singularity of $Y_i$. Under any situation of Theorem \hyperref[b]{B}, $\Gamma$ and $\Sigma$ are proper subsets of $V_0$. In the situation of Theorem \hyperref[b]{B} (b.2), we show additionally that $L_i$, $H_j, j \in \{2,3,\ldots,n\} \setminus \{i\}$, are proper subsets of $V_0$, so we can take $$\Delta=\Gamma \cup \Sigma \cup L_i \cup \bigcup_{\overset{k=2}{k \neq i}}^{n} H_k.$$ Analogously, in the situation of Theorem \hyperref[b]{B} (b.1), the $H_j$'s are proper subsets of $V_0$, then we take $$\Delta=\Gamma \cup \Sigma \cup \bigcup_{k=2}^{n} H_k.$$

It is easy to see that $\Gamma$, $L_i$ and the $H_j$'s are algebraic. For $\Sigma$, by Proposition \ref{closerlook} (c) the change $(x_1,\ldots,x_n) \mapsto (x_1^{p_1},\ldots,x_n^{p_n})$ turn the entries of $Y \in W_0$ into homogeneous polynomials, thus we can use the multipolynomial resultant for $n$ homogeneous polynomials to conclude that $\Sigma$ is algebraic (see \cite{14} for example).

Since the condition $c_{n-1,n}$ holds in any situation of Theorem \hyperref[b]{B}, we have $\lambda=p_n d>0$. It follows from Remark \ref{remark 1} and Theorem \hyperref[b]{B} (a) that $\Sigma \subset V_0$ is proper. We claim that $\Gamma \subset V_0$ is also proper. In fact, with exception to very few cases, the condition $c_{n-2,n-1}$ is also satisfied, i.e., $p_n+\lambda=p_{n-1}d$. As also $\lambda=p_n d$, we have $$\overline{X}=x_{n}^{d} \cdot R_n+x_{n-1}^{d}\frac{\partial}{\partial x_n}$$ is such that $[S,\overline{X}]=\lambda.\overline{X}$. Then the singular set of $\overline{\omega}=i_S i_{\overline{X}}  (\nu_n)$ has no divisorial components, and $[\overline{Y}] \notin \Gamma$, where $\overline{Y}=\rot(\overline{\omega})$. Likewise one can check that $\Gamma$ is proper in the other cases.

Next assume that we are in the situation of Theorem \hyperref[b]{B} (b.2). We show that $L_i \subset V_0$ is proper. In fact, let $\omega_i=i_{S_i}i_{X_i}(\nu_n)$, $[S_i,X_i]=\lambda_i .X_i=0$, like in Proposition \ref{charts} (a) and defining some foliation of $\mathcal{F}(\mathfrak{P},\lambda,d+1)$ on $E_i$. By the parametrization $\Phi$ (\ref{Phi}), if $Y_i=\rot(\omega_i)=\tau_i.X_i-\dive(X_i).S_i$ is such that $\det(DY_i(q_i)) \neq 0$, then we are done. Otherwise, let $\epsilon_1,\epsilon_2,\ldots,\epsilon_n$ denote $n$ arbitrary non-real complex numbers satisfying $$\sum_{k=1}^{n} \epsilon_k \in \mathbb{Z}-\{0\}.$$ 

Set $\tilde{\omega}_i=i_{S_i}i_{\tilde{X}_i}(\nu_n)$, where $\tilde{X}_i=X_i+\epsilon(\sum_{k=1}^n \epsilon_k x_k \frac{\partial}{\partial x_k}), \epsilon \in \mathbb{C}$. We have that $[S_i,\tilde{X}_i]=0$ and $\tilde{Y}_i=\rot(\tilde{\omega}_i)=Y_i+\epsilon.Z$, where $$Z=\tau_i .\left(\sum_{k=1}^{n} \epsilon_k x_k \frac{\partial}{\partial x_k}\right)- \left(\sum_{k=1}^n \epsilon_k\right).S_i.$$

Since $Z$ is a diagonal vector field satisfying $\det(DZ(q_i)) \neq 0$, if we take $\abs{\epsilon}$ sufficiently large we have $\det(D\tilde{Y}_i(q_i)) \neq 0$. This finishes the proof that $L_i \subset V_0$ is proper. 

Finally, in both situations of Theorem \hyperref[b]{B}, $H_j \subset V_0$ is proper, $j=2,\ldots,n$. In fact, let $\omega_j=i_{S_j}i_{X_j}(\nu_n)$, $[S_j,X_j]=\lambda_j .X_j$, defining some foliation of $\mathcal{F}(\mathfrak{P},\lambda,d+1)$ on $E_j$. As $\tau_j \neq 0$ and either $c_{j-1,j-1}$ or $c_{j-1,j}$ is verified, if necessary we can redefine $\omega_j$ by adding to $X_j$ either $c \cdot \partial / \partial x_j$ in the former case or $c \cdot \partial / \partial x_{j+1}$ in the latter case, where $c \in \mathbb{C}^{*}$, in order to obtain $Y_j(q_j) \neq 0$. Once again by the parametrization $\Phi$ it is sufficient to conclude that $H_j \subset V_0$ is proper.
\end{proof}

\end{proof}

\begin{remark}\label{division}
In any situation of Theorem \hyperref[b]{B}, there is some $k \in \{1,\ldots,n\}$ such that $p_1$ divides $p_k+\lambda$. In fact, if $0$ is an isolated singularity of some $Y \in W_0$, there must exist $m \in \mathbb{Z}_{>0}$ and $k \in \{1,\ldots,n\}$ such that $x_{1}^{m} \partial / \partial x_k \in W_0$, for otherwise we would have $\{(x_1,\ldots,x_n) \mid x_2=x_3=\cdots=x_n=0\} \subset \sing(Y)$, a contradiction. By Proposition \ref{closerlook} (c), $m \cdot p_1=p_k+\lambda$, which implies the result. 
\end{remark}

\begin{remark}\label{dplus1term}
If $\lambda=p_n d$, one can show that for $Y \in W_0$ there is $\mu \in \mathbb{C}$ such that $\hat{Y}_{d+1}=\mu. x_{n}^d (\tau.R_n-(n+d).S)=\mu.x_{n}^d\left(-\tau_1 x_1 \partial/\partial x_1+\sum_{k=2}^{n} \tau_k x_k \partial / \partial x_k\right)$. This happens in Theorem \hyperref[b]{B}, since the condition $c_{n-1,n}$ is always valid.
\end{remark}

\begin{cor}\label{gen-klein-lie}
Let $\mathfrak{L}=(d^{n-1}+\cdots+1,d^{n-2}+\cdots+1,\ldots,d+1,1)$. Then for every $d \geq 1$, $\overline{\mathcal{F}(\mathfrak{L},-1,d+1)}$ is an irreducible component of $\mathscr{F}_2(d+1,n)$ of dimension $k(n,d)$, where $k(n,d)=n^2+2n-1$ if $d\geq 2$ and $k(n,1)=n^2+2n-2$. This component is the closure of a $\mathrm{PGL}(n+1,\mathbb{C})$ orbit on $\mathscr{F}_2(d+1,n)$. Furthermore, if $d=1$ this is the unique GK component of $\mathscr{F}_2(2,n)$ of the form $\overline{\mathcal{F}(\mathfrak{P},\lambda,2)}$.
\end{cor}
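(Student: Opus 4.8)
The plan is to verify that the weight system $\mathfrak{L}=(d^{n-1}+\cdots+1,\ldots,d+1,1)$ together with $\lambda=-1$ falls into the framework of Theorem \hyperref[a]{A}, and then to compute the dimension and the fibre structure of $\Phi$. First I would exhibit an explicit quasi-homogeneous vector field realizing the GK condition. Writing $l_k=d^{n-k}+d^{n-k-1}+\cdots+1=\frac{d^{n-k+1}-1}{d-1}$ for $d\geq 2$ (and $l_k=n-k+1$ for $d=1$), one checks the telescoping relation $l_k=d\,l_{k+1}+1$, so that $l_k+\lambda=l_k-1=d\,l_{k+1}$. This is precisely the kind of arithmetic that makes the monomials $x_{k+1}^{\,d}\,\partial/\partial x_k$ compatible with $[S,X]=\lambda X$ in the sense of Proposition \ref{closerlook}(c), since $\sum_j l_j\sigma_j=d\,l_{k+1}=l_k+\lambda$. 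Taking $X=\sum_{k=1}^{n-1} x_{k+1}^{\,d}\,\partial/\partial x_k$ (with $p_{n+1}=0$, consistent with the paper's convention) gives a vector field with $[S,X]=-X$ whose only singularity in $E_0$ is the origin, with nilpotent linear part; thus $q_0$ is a GK singularity and $\mathcal{F}(\mathfrak{L},-1,d+1)$ contains a GK foliation. Since $\lambda=-1\neq 0$, the second part of Theorem \hyperref[a]{A} applies directly and $\overline{\mathcal{F}(\mathfrak{L},-1,d+1)}$ is an irreducible component of $\mathscr{F}_2(d+1,n)$.

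For the claim that this component is a single $\mathrm{PGL}(n+1,\mathbb{C})$-orbit closure, the key is to show $\dim V_0$ is as small as possible. I would argue that for these specific weights $W_0$ is spanned, modulo the relation $Y=\operatorname{rot}(\omega_Y)$, by essentially one orbit's worth of data: because the relations $l_k+\lambda=d\,l_{k+1}$ pin down the admissible monomials so tightly, the only monomial vector fields satisfying the weight condition and the divergence-free constraint are scalar multiples of the building blocks appearing in $X$ above (together with the top-degree term forced by Remark \ref{dplus1term}). Concretely, I expect $\dim V_0=0$ when $d\geq 2$, so that $V_0$ is a single point and $\Phi$ restricts to the orbit map $T\mapsto T^{*}\mathcal{F}(S,X)$; the image is then the closure of one $\mathrm{PGL}(n+1,\mathbb{C})$-orbit. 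Feeding $\dim V_0=0$ into Proposition \ref{dim} gives $\dim=n^2+n-k$ where $k$ is the generic fibre dimension; combined with the generic fibre being the diagonal torus $D(n+1)$ as in the proof of Proposition \ref{dim}, one obtains the stated $n^2+2n-1$ for $d\geq 2$. For $d=1$ I would invoke the case $d=1,\lambda\neq 0$ of Proposition \ref{dim} (here $\lambda=-1$), which shifts the fibre dimension by one and yields $n^2+2n-2$, matching $k(n,1)$ and consistent with Corollary \ref{corklein} when $n=3$.

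The uniqueness statement for $d=1$ is where I would invoke Theorem \hyperref[b]{B}. When $d=1$ the defining conditions $c_{ij}$ collapse: $c_{ij}$ becomes $p_j+\lambda=p_{i+1}$ (resp.\ $p_{j+1}+\lambda=p_{i+1}$), and the chain of conditions $c_{11},c_{22},\ldots,c_{n-1,n}$ appearing in (b.1)/(b.2) forces a rigid arithmetic progression among the $p_k$. I would show that together with the relative primality of $p_1,\ldots,p_n$ and $\lambda\in\mathbb{Z}_{>0}$, the only solution (up to the symmetry $\overline{\mathfrak{P}}$ of Proposition \ref{symmetry}) is $\mathfrak{P}=(n,n-1,\ldots,1)$ with $\lambda=1$, which is exactly $\mathfrak{L}$ with $\mu=-\lambda_1$ under the reflection $p_k\mapsto p_1-p_{n-k+2}$. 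Care must be taken because the case $d=1,\lambda=0$ is explicitly excluded from the main body of Theorem \hyperref[b]{B} and deferred to this corollary; so I would handle it separately, noting that $\lambda=0$ with $d=1$ forces $X$ to be a traceless diagonal vector field (by Proposition \ref{closerlook}(c)), whose associated foliation is never GK at $q_0$ since $DY(0)$ is then semisimple rather than nilpotent, hence contributes no GK component.

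The main obstacle I anticipate is the dimension computation, specifically proving $\dim V_0=0$ for $d\geq 2$: one must rule out all unexpected weight-admissible, divergence-free monomial vector fields beyond the diagonal building blocks. This is a finite but delicate combinatorial check using the strict monotonicity $l_1>\cdots>l_n$ and the explicit values $l_k=\frac{d^{n-k+1}-1}{d-1}$; the telescoping identities should force every admissible exponent vector $\sigma$ to be supported on a single coordinate with a prescribed power, but verifying there are no sporadic solutions for small $k$ or near the top degree $d+1$ (where Remark \ref{dplus1term} already constrains $\hat{Y}_{d+1}$) is the technical heart. Once $\dim V_0=0$ is established, the identification of the component with an orbit closure and the dimension formula follow mechanically from Proposition \ref{dim}.
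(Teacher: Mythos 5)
There are several genuine gaps. First, your explicit witness fails: for $X=\sum_{k=1}^{n-1}x_{k+1}^{d}\,\partial/\partial x_k$ the $\partial/\partial x_n$-component is identically zero, so $\sing(X)$ is the whole $x_1$-axis; since $\dive(X)=0$ we get $Y=\rot(\omega)=\tau X$, hence $\cod(\sing(d\omega))=n-1<n$ and $q_0$ is \emph{not} GK for this foliation. Worse, no repair inside this chart is possible: the only weight-admissible $\partial/\partial x_n$-coefficient solves $\sum l_j\sigma_j=l_n+\lambda=0$, i.e.\ is a constant, and by Remark \ref{remark 1} the condition $\lambda=-1<0$ forbids $0$ from ever being an isolated singularity of $Y$. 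This is exactly why the paper first passes, via Proposition \ref{symmetry}, to the reflected presentation $\mathcal{F}(\overline{\mathfrak{L}},\lambda_1,d+1)$ with $\lambda_1=d^{n}>0$, verifies the conditions $c_{12},\dots,c_{n-1,n}$ of Theorem \hyperref[b]{B} (b.1) and $\tau_j\neq0$, and exhibits a $Y\in W_0$ with an isolated zero there. Note also that invoking the second part of Theorem \hyperref[a]{A} requires \emph{all} singularities to be GK, including those in the charts $E_1,\dots,E_n$, which you never check; Theorem \hyperref[b]{B}/Lemma \ref{reduction} is the mechanism that handles this.

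Second, the dimension count is wrong: $\dim V_0\neq 0$. The space $W_0$ contains the $n$ linearly independent divergence-free fields $x_{k+1}^{d}\partial/\partial x_k$ ($k<n$) and $\partial/\partial x_n$ (in the reflected chart the paper computes $W_0$ exactly and finds $\dim W_0=n$), so $\dim V_0=n-1$; feeding $\dim V_0=0$ into Proposition \ref{dim} would give $n^2+n$, not the stated $n^2+2n-1$, so your own arithmetic does not close. The orbit-closure claim does not need $\dim V_0=0$: the diagonal torus $D(n+1)$ acts on $V_0$ with a dense orbit, which is how the paper gets transitivity. Third, for $d=1$, $\lambda=0$ you conflate ``GK'' with ``quasi-homogeneous'': there $Y=\sum c_kx_k\partial/\partial x_k$ with all $c_k\neq0$ has an isolated zero, so $q_0$ \emph{is} GK (Definition of GK concerns $\cod(\sing(d\omega))$, not nilpotency of $DY(0)$). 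The correct reason this case contributes no component is that such foliations are logarithmic and deform to logarithmic foliations outside $\mathcal{F}(\mathfrak{P},0,2)$, so the closure is not an irreducible component. Finally, in the case $d=1$, $\lambda\neq0$, the chain of $c_{ij}$'s does not by itself pin down $p_1$ in the situation (b.1), $i=0$; one needs Remark \ref{division} ($p_1\mid p_k+\lambda$ for some $k$) plus the inequality $p_1>p_2$ to force $p_1=n$, a step your sketch omits.
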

\begin{proof}
For $i=1,\ldots,n$, set $r_i=\sum_{j=i-1}^{n-1}d^j$, $\mu=d^{n-1}$ and $\mathfrak{R}=(r_1,\ldots,r_n)$. We show first that $\mathcal{F}(\mathfrak{R},\mu,d+1)$ satisfy the conditions of Theorem \hyperref[b]{B}. By Proposition \ref{symmetry} we have that $\mathcal{F}(\mathfrak{R},\mu,d+1)=\mathcal{F}(\mathfrak{L},-1,d+1)$, and the result follows. It is straightforward the verification that the conditions $c_{12},c_{23},\ldots,c_{n-1,n}$ in Theorem \hyperref[b]{B} (b.1), $i=0$, are verified, i.e., for $p_1=r_1, \ldots,p_n=r_n,\lambda=\mu$ we have 
\begin{equation}\label{sitB}
p_3+\lambda=p_2 d,p_4+\lambda=p_3d,\ldots,p_n+\lambda=p_{n-1}d,\lambda=p_nd.
\end{equation}

Since $r_2,\ldots,r_n$ are multiple of $d$ and $r_1$ is not, we have that $\tau_j =\tau-r_j(n+d)\neq 0,j=2,\ldots,n$. We assert that $$W_0=\{ \mu.x_n^d \left(\tau . R_n-(n+d).S \right)+\sum_{k=2}^n a_{k-1}.x_{k-1}^d \frac{\partial}{\partial x_k} ; \mu,a_1,\ldots,a_{n-1} \in \mathbb{C}  \}.$$

In fact, by Proposition \ref{closerlook} (c), it is due to Remark \ref{dplus1term} and the following result.

\begin{claim}
Given $k \in \{1,\ldots,n\}$, the system $b_1\cdot r_1 +\cdots+b_n\cdot r_n=r_k+\lambda$, $b_1+\cdots+b_n \leq d$ and $b_1,\ldots,b_n \in \mathbb{Z}_{\geq 0}$ has no solution if $k=1$. If $k \neq 1$, then $b_j=0,j \neq k-1$, $b_{k-1}=d$ is the unique solution.
\end{claim}
\begin{proof}
The above equality means that 
\begin{equation}\label{basis}
\sum_{i=0}^{n-1}\left(\sum_{j=1}^{i+1}b_j\right) d^i=d^n+d^{n-1}+\cdots+d^{k-1}.
\end{equation}
If $b_1+\cdots+b_n < d$, we have that both sides of (\ref{basis}) provide the representation of $r_k+\lambda$ in the base $d$ system, and since the term $d^n$ would not appear on the left-hand side we obtain a contradiction. Assume that $b_1+\cdots+b_n=d$. If $k=1$, from (\ref{basis}) we have $\sum_{j=1}^{k}b_j=d$, for any $k=2,\ldots,n$ and $b_1=d+1$, and we get a contradiction. If $k>1$, a similar argument shows that $b_{k-1}=d$ and $b_j=0,j \neq k-1$. 
\end{proof}

Setting $\mu=a_1=\cdots=a_{n-1}=1$ in the definition of $W_0$, by Remark \ref{dplus1term} it follows that $0$ is an isolated singularity of $$Y=-\tau_1 x_1x_{n}^d \partial/\partial x_1+\sum_{k=2}^{n} (\tau_k x_k x_{n}^d + x_{k-1}^d )\frac{\partial}{\partial x_k},$$ then Theorem \hyperref[b]{B} (a) holds (recall that $\tau_1 \neq 0$). Moreover, since $\dim V_0=n-1$, it follows from Proposition \ref{dim} that $\overline{\mathcal{F}(\mathfrak{R},\mu,d+1)}$ has dimension $k(n,d)$. Of course the action of $\aut(\mathbb{P}^n)=\mathrm{PGL}(n+1,\mathbb{C})$ on $\mathscr{F}_2(d+1,n)$ leaves $\mathcal{F}(\mathfrak{L},-1,d+1)$ invariant, and by the description of $W_0$ it is easy to see that this action is transitive.

Next, we show that of all components $\overline{\mathcal{F}(\mathfrak{P},\lambda,d+1)}$ described by Theorem \hyperref[b]{B}, $\overline{\mathcal{F}(\mathfrak{R},\mu,d+1)}$ is the only one where $\lambda_1 <0$. In fact, with exception to the situation of Theorem \hyperref[b]{B} (b.1), $i=0$, the condition $c_{11}$ holds, i.e., $p_1+\lambda=p_2d$, which implies $\lambda_1=\overline{p}_{n}d>0$. In the situation of Theorem \hyperref[b]{B} (b.1), $i=0$, $p_1,\ldots,p_n,\lambda,d$ satisfy (\ref{sitB}) and there exists a positive integer $m$ such that $$p_2=m r_2,p_3=m r_3,\ldots,p_n=m r_n,\lambda=m d^{n-1}.$$ 

Assume, by Remark \ref{division}, that $p_1$ divides $p_2+\lambda$. This means that $p_1$ divides $d^{n-1}+\cdots+d+1$. Since $p_1>p_2$, we have that $m \leq d$ and
\begin{equation*}
\lambda_1 =p_1(d-1)-\lambda \geq (p_2+1)(d-1)-md^{n-1}=d-(m+1) \geq -1.
\end{equation*}  

Moreover, if $\lambda_1=-1$ then $m=d$ and $p_1 = d^{n-1}+d^{n-2}+\cdots+d+1$, and we obtain $\overline{\mathcal{F}(\mathfrak{R},\mu,d+1)}$. In an analogous way, one can show that if $p_1$ divides $p_l+\lambda$, for $l \neq 2$, then $\lambda_1 \geq 0$.

Finally we consider $d=1$. In this case $\lambda+\lambda_1=p_1(d-1)=0$. Suppose first that $\lambda,\lambda_1 \neq 0$. By Proposition \ref{symmetry}, we can assume that $\lambda_1<0$, and we obtain $\overline{\mathcal{F}(\mathfrak{R},\mu,d+1)}$ with $d=1$. Now suppose that $\lambda=\lambda_1=0$. Take some $\mathcal{F} \in \mathcal{F}(\mathfrak{P},0,2)$, defined by $\omega=\frac{1}{\tau}i_S i_Y (\nu_n)$ on $E_0$, $Y=\rot(\omega)$. Since $q_0$ is is an isolated singularity of $Y$, there are $c_1,\ldots,c_n \in \mathbb{C}^{*}$ such that $Y=\sum_{k=1}^n c_k x_k \frac{\partial}{\partial x_k}$. Thus $\eta=\frac{\omega}{x_1 \cdots x_n}$ is a logarithmic form defining $\mathcal{F}$, one can deform $\mathcal{F}$ to a logarithmic foliation that does not belong to $\mathcal{F}(\mathfrak{P},0,2)$ and consequently $\overline{\mathcal{F}(\mathfrak{P},0,2)}$ is not an irreducible component of $\mathscr{F}_2(2,n)$. 
\begin{remark}
As a consequence of the proof of Corollary \ref{gen-klein-lie}, $\overline{\mathcal{F}(\mathfrak{L},-1,d+1)}$, $d \geq 1$, are the only irreducible components of the form $\overline{\mathcal{F}(\mathfrak{P},\lambda,d+1)}$ whose generic element has exactly one non-Kupka singularity (quasi-homogeneous), namely $q_1$.
\end{remark}
\end{proof}

\begin{proof}[Proof of Theorem B.2]
We will give an explicit form to the components described by Theorem \hyperref[b]{B}. In the case $n=4$, we have four different situations: Theorem \hyperref[b]{B} (b.1), $i=0$ and $i=1$, Theorem \hyperref[b]{B} (b.2), $i=2$ and $i=3$. These, combined with the possibilities of Remark \ref{division}, give rise to what we call cases. There are, therefore, 16 cases to consider. We begin by showing the three cases where the families $\mathcal{F}(p,q,r,s;\lambda,d+1)$ never contain GK foliations.

In the sequel, given two integers $a$ and $b$, when $a$ divides $b$ we sometimes denote this by $a \mid b$. We also define $$m_1=\frac{(p+\lambda)(q+\lambda)(r+\lambda)(s+\lambda)}{pqrs}.$$
 
\begin{enumerate}[leftmargin=*]
\item Theorem \hyperref[b]{B} (b.1), $i=0$, $p$ divides $r+\lambda$

The conditions $c_{12}$, $c_{23}$ and $c_{34}$ are satisfied, i.e., $r+\lambda=qd$, $s+\lambda=rd$, $\lambda=sd$. An easy verification shows that there exists $m \in \mathbb{Z}_{>0}$ such that 
\begin{equation}\label{eqcase1}
p > q=m(d^2+d+1) > r=m(d^2+d) > s=md^2,\lambda=md^3. 
\end{equation}

Hence $\gcd(p,q,r,s)=1 \iff \gcd(p,m)=1$. As $\lambda>0$, by Remark \ref{remark 1} and Proposition \ref{prop-qh-vf} (d) it suffices to show that $m_1 \not\in \mathbb{Z}$. Suppose, by contradiction, that $m_1 \in \mathbb{Z}$. By (\ref{eqcase1}), this means that $p \mid d^3(d^3+d^2+d+1)$. On the other hand, since $p \mid r+\lambda$ and $p>q$, we have that $\gcd(p,d^2+d+1) \neq 1$. Clearly a common prime factor of $p$ and $d^2+d+1$ cannot divide neither $d^3$ nor $d^3+d^2+d+1$, which is a contradiction. \\

\item Theorem \hyperref[b]{B} (b.1), $i=0$, $p$ divides $s+\lambda$

Once again $p,q,r,s,\lambda$ are given as in (\ref{eqcase1}). We cannot proceed as before, because now $m_1 \in \mathbb{Z}$. Let us write $$Y=A_1 \frac{\partial}{\partial x} + A_2 \frac{\partial}{\partial y} + A_3 \frac{\partial}{\partial z}+A_4 \frac{\partial}{\partial w},$$ for $Y \in W_0$. We claim that $A_1(x,y,z,0) \equiv 0$ and $A_2(x,y,z,0) \equiv 0$, which clearly implies that $0$ is a non-isolated singularity of $Y$. 

We check that $A_1(x,y,z,0) \equiv 0$. Suppose this is not true. Then a monomial term $x^a y^b z^c$ must appear in the expansion of $A_1$. By Proposition \ref{closerlook} (c), $p+\lambda=ap+bq+rc$, that is
\begin{equation}\label{eqcase12}
p(a-1)=m(d^3-b(d^2+d+1)-c(d^2+d)). 
\end{equation}

As $p \mid s+\lambda$, we have that $p \mid d^3+d^2=d^2(d+1)$, so we can write $p=j_1 j_2$, where $j_1 \mid d^2$ and $j_2 \mid d+1$. Since $j_1$ divides the right-hand side of (\ref{eqcase12}), $\gcd(j_1,m)=1$ and $j_1$ divides $d^2$, it follows that 
\begin{align*}
&j_1 \mid b(d+1)+cd=d(b+c)+b \implies j_1 \mid d(d(b+c)+b)\implies \\ &j_1 \mid bd \implies j_1 \mid (d(b+c)+b)-bd=cd+b \implies \\ &j_1 \mid b^2=b(cd+b)-cbd  \implies j_1 \mid d^2-b^2=(d-b)(d+b).
\end{align*}

Since $j_2$ divides the right-hand side of (\ref{eqcase12}), $\gcd(j_2,m)=1$ and $j_2$ divides $d+1$, it follows that
\begin{align*}
&j_2 \mid d^3-b=d^3+1-(b+1) \implies j_2 \mid b+1 \implies \\ &j_2 \mid d-b=(d+1)-(b+1).
\end{align*} 

As $\gcd(j_1,j_2)=1$ and both $j_1,j_2$ divide $d^2-b^2=(d-b)(d+b)$, we conclude that $p=j_1 j_2 \mid d^2-b^2$. Since $b \leq d$, we have $p \leq d^2-b^2 \leq d^2$. As $p>q$, we obtain a contradiction. Therefore $A_1(x,y,z,0) \equiv 0$.

Analogously, if we suppose $A_2(x,y,z,0) \not\equiv 0$, writing $p=j_1 j_2$ as above, it is possible to show that $j_1 \mid d^2-(b-1)^2$ and $j_2 \mid d+1-b$, which implies $p \mid d^2-(b-1)^2$. Once again since $p>q$ we obtain a contradiction. \\

\item Theorem \hyperref[b]{B} (b.2), $i=2$, $p$ divides $s+\lambda$

In this case, $\lambda=q(d-1)$, $c_{23}$ and $c_{34}$ are satisfied, i.e., $s+\lambda=rd$, $\lambda=q(d-1)=sd$. So
\begin{equation}\label{eqcase3}
p > q=md^2> r=m(d^2-1) > s=m(d^2-d),\lambda=m(d^3-d^2), 
\end{equation}
for some $m \in \mathbb{Z}_{>0}$. Hence $\gcd(p,q,r,s)=1 \iff \gcd(p,m)=1$. 

We show that $m_1 \not\in \mathbb{Z}$. In fact, if $m_1 \in \mathbb{Z}$ by (\ref{eqcase3}) $p \mid d^3(d-1)(d^2+d+1)$. On the other hand, $p \mid s+\lambda$ means that $p \mid d^3-d=d(d+1)(d-1)$. Then actually $p \mid d(d-1)(d^2+d+1)=d^4-d$. But then $$p \mid d^4-d - d(d^3-d)=d^2-d<q,$$ and we obtain a contradiction. 
\end{enumerate}

It remains to consider the cases which provide GK irreducible components, corresponding to the situations of Theorem \hyperref[thmB2]{B.2}. In all cases, $p,q,r,s,\lambda,d$ satisfy certain $c_{ij}$'s and we show, in a very similar way, that the other conditions of Theorem B also hold. Therefore we do so only for two cases, which contain the main aspects.

\begin{enumerate}[leftmargin=*]
\setcounter{enumi}{3}
\item Theorem \hyperref[b]{B} (b.1), $i=1$, $p$ divides $s+\lambda$

The conditions $c_{11}$, $c_{23}$ and $c_{34}$ are satisfied, i.e., $p+\lambda=qd$, $s+\lambda=rd$, $\lambda=sd$. Thus 
\begin{equation*}
p=kd > q=md+k > r=m(d+1) > s=md,\lambda=md^2, 
\end{equation*}
for some $m \in \mathbb{Z}_{>0}$. Hence $\gcd(p,q,r,s)=1 \iff \gcd(k,m)=1$ and $p$ divides $s+\lambda$ means that $k$ divides $d+1$.\\

$\bullet$ $\tau_2,\tau_3,\tau_4 \neq 0$ \

In this case, $\tau_2=r+s-3q<0$ and $\tau_4=p+q+r-3s>0$. Suppose that $\tau_3=p+q-3r=0$; this implies that $k(d+1)=m(2d+3)$. Since the pairs $k,m$ and $d+1,2d+3$ are relatively prime, it follows that $m=d+1$ and $k=2d+3$. We obtain a contradiction, since $k$ divides $d+1$.\\

$\bullet$ Theorem \hyperref[b]{B} (a) holds true \

We claim that Theorem \hyperref[b]{B} (a) is satisfied if and only if 
\begin{equation}\label{weirdcond}
\gcd\left(\frac{m(d+1)}{k},d \right)=1. 
\end{equation}
Note that we are in the situation of Theorem \hyperref[thmB2]{B.2} (b), where additionally $k$ divides $d+1$.

Assume that Theorem B (a) holds, i.e, $0$ is an isolated singularity of some $Y \in W_0$. Write $Y \in W_0$ as in case (2). We claim that $A_1(x,0,z,0) \equiv 0$ and $A_3(x,0,z,0) \equiv 0$. Let us check that $A_1(x,0,z,0) \equiv 0$. If it is not true, then a monomial term $x^a z^b$ must appear in the expansion of $A_1$. It follows that $p+\lambda=ap+br$, equivalently, $kd(a-1)=m(d^2-b(d+1))$.

Hence $k$ divides $d^2-b(d+1)$, which implies that $k=1$ since also $k$ divides $d+1$. We get $p=d < q=md+k$, which is a contradiction. By proceeding in an analogous way, we obtain $A_3(x,0,z,0) \equiv 0$. 

As both $A_1(x,0,z,0)$ and $A_3(x,0,z,0)$ vanish, it is necessary that a monomial term $x^a z^b$ appears in the expansion of $A_2$. Thus $q+\lambda=ap+br$, that is, $ad-1=mj(d-b)$, where $j=\frac{d+1}{k} \in \mathbb{Z}_{>0}$. Hence $\gcd(mj,d)=1$ and we have (\ref{weirdcond}).

Conversely, assume that (\ref{weirdcond}) holds and set $j$ as above. As $\gcd(mj,d)=1$, there exists a integer $b$ such that $d \mid mjb-1$. We can assume that $0 < b < d$. Thus $$d \mid mjd-(mjb-1)=mj(d-b)+1.$$ 

If we define $a=\frac{mj(d-b)+1}{d} \in \mathbb{Z}_{>0}$, then $q+\lambda=ap+br$. One can check that $a+b \leq d$. Set $l=\frac{s+\lambda}{p}<d$ and
\begin{align*}
Y&=(-\tau_1 xw^d+y^d)\frac{\partial}{\partial x} +(\tau_2 yw^d + x^a z^b)\frac{\partial}{\partial y}+ \tau_3 zw^d \frac{\partial}{\partial z} +\\ &(\tau_4 w^{d+1} +x^l + z^d) \frac{\partial}{\partial w}.
\end{align*}

We have that $Y \in W_0$ and $0$ is an isolated singularity of $Y$. Then Theorem \hyperref[b]{B} (a) holds.\\

\item Theorem \hyperref[b]{B} (b.2), $i=2$, $p$ divides $p+\lambda$

Besides $\lambda=q(d-1)$, $c_{23}$ and $c_{34}$ are satisfied, i.e., $s+\lambda=rd$, $\lambda=q(d-1)=sd$. Thus 
\begin{equation*}
p > q=md^2> r=m(d^2-1) > s=m(d^2-d),\lambda=m(d^3-d^2), 
\end{equation*}
for some $m \in \mathbb{Z}_{>0}$. Hence $\gcd(p,q,r,s)=1 \iff \gcd(p,m)=1$ and $p$ divides $p+\lambda$ means that $p$ divides $d^3-d^2$. So we are in the situation of Theorem \hyperref[thmB2]{B.2} (c), where $p$ divides $d^3-d^2$.\\

$\bullet$ $\tau_3,\tau_4 \neq 0$ \

In this case $\tau_3=p+q-3r$ and $\tau_4=p+q+r-3s>0$. Suppose that $\tau_3=0$; this implies that $p=m(2d^2-3)$. Since $\gcd(p,m)=1$ it follows that $m=1$. Then $p=2d^2-3, q=d^2, r=d^2-1, s=d^2-d, \lambda=d^3-d^2.$ Using polynomial division, we have $2(p+\lambda)=p(d+1)+3(d-1)$. As $p$ divides $p+\lambda$ it follows that $2d^2-3$ divides $3(d-1)$, and we obtain a contradiction since $d \geq 2$.\\

$\bullet$ Theorem \hyperref[b]{B} (a) holds true \

Set $l=\frac{p+\lambda}{p}=1+\frac{sd}{p} \in \mathbb{Z}$. We have $1 < l < d+1$. Take
\begin{align*}
Y&=x(-\tau_1w^d+ax^{l-1}+a_1 y^{d-1})\frac{\partial}{\partial x}+y(\tau_2 w^d+bx^{l-1}+b_1 y^{d-1}) \frac{\partial}{\partial y}+ \\ &z(\tau_3 w^d+ cx^{l-1}+c_1 y^{d-1}) \frac{\partial}{\partial z} +(w(\tau_4 w^d+ex^{l-1}+e_1 y^{d-1})+z^d) \frac{\partial}{\partial w}.
\end{align*}
Then $Y \in W_0$ as long as $l.a+b+c+e=0$ and $a_1 + d.b_1+c_1+e_1=0$. Furthermore, $0$ is an isolated singularity of $Y$ if and only if
\begin{align*}
\begin{vmatrix}
-\tau_1&a&a_1\\
\tau_2&b&b_1\\
\tau_3&c&c_1\\
\end{vmatrix} \neq 0,
\begin{vmatrix}
-\tau_1&a&a_1\\
\tau_2&b&b_1\\
\tau_4&e&e_1\\
\end{vmatrix} \neq 0,
\begin{vmatrix}
-\tau_1&a\\
\tau_3&c\\
\end{vmatrix} \neq 0,
\begin{vmatrix}
-\tau_1&a\\
\tau_4&e\\
\end{vmatrix} \neq 0, \\
\begin{vmatrix}
\tau_2&b_1\\
\tau_3&c_1\\
\end{vmatrix} \neq 0,
\begin{vmatrix}
\tau_2&b_1\\
\tau_4&e_1\\
\end{vmatrix} \neq 0,
\begin{vmatrix}
a&a_1\\
b&b_1\\
\end{vmatrix} \neq 0,
a \neq 0, b_1 \neq 0,
\end{align*}
where $\mid \cdot \mid$ stands for the determinant. After making the substitutions $e=-(l.a+b+c)$ and $e_1=-(a_1 + d.b_1+c_1)$, we see that the conditions above is given by a non-empty Zariski open set on $\mathbb{C}^6$ with coordinates $(a,a_1,b,b_1,c,c_1)$, which shows that $0$ is an isolated singularity of generic $Y \in W_0$.
\end{enumerate}

The case (4) is the only one where a further condition is required in order to ensure that the families contain GK foliations. In all other cases, the verification that the $\tau_j$'s are not zero is either immediate, as $\tau_2 \neq 0$ in case (4), or it can obtained with the aid of polynomial division, as $\tau_3 \neq 0$ in case (5). Moreover, since $\lambda=sd$ in all cases, we have $\tau_4=p+q+r-3s>0$. The verification that Theorem \hyperref[b]{B} (a) holds is very close to what we did in cases (4) and (5). By symmetry, in Theorem \hyperref[b]{B} (b.2), $i=3$, the families given by the case $p$ divides $q+\lambda$ coincide with those given by the case $p$ divides $s+\lambda$. We summarize in the following table the correspondence between Theorem \hyperref[b]{B} and Theorem \hyperref[thmB2]{B.2}.

\begin{center}
\resizebox{\columnwidth}{!}{%
\begin{tabular}{|c|c|c|c|c|}
\hline
Case                                                            & $p \mid p+\lambda$                                                              & $p \mid q+\lambda$                                                                  & $p \mid r+\lambda$                                                                                    & $p \mid s+\lambda$                                                                                             \\ \hline
\begin{tabular}[c]{@{}c@{}}Theorem B (b.1)\\ $i=0$\end{tabular} & \begin{tabular}[c]{@{}c@{}}Theorem B.2 (a)\\ $p$ divides $d^3$\end{tabular}     & \begin{tabular}[c]{@{}c@{}}Theorem B.2 (a)\\ $p$ divides $d^3+d^2+d+1$\end{tabular} & \begin{tabular}[c]{@{}c@{}}Theorem B (a) \\ not satisfied\\ $m_1 \not\in \mathbb{Z}$\end{tabular}     & \begin{tabular}[c]{@{}c@{}}Theorem B (a) \\ not satisfied\end{tabular}                                         \\ \hline
\begin{tabular}[c]{@{}c@{}}Theorem B (b.1)\\ $i=1$\end{tabular} & \begin{tabular}[c]{@{}c@{}}Theorem B.2 (b)\\ $k$ divides $d$\end{tabular}       & \begin{tabular}[c]{@{}c@{}}Theorem B.2 (b)\\ $kd$ divides $m(d^2+d)+k$\end{tabular} & \begin{tabular}[c]{@{}c@{}}Theorem B.2 (b)\\ $d$ divides $m$ and  \\$k$ divides $d^2+d+1$\end{tabular} & \begin{tabular}[c]{@{}c@{}}Theorem B.2 (b)\\ $k$ divides $d+1$\\ and $\gcd(\frac{m(d+1)}{k},d)=1$\end{tabular} \\ \hline
\begin{tabular}[c]{@{}c@{}}Theorem B (b.2)\\ $i=2$\end{tabular} & \begin{tabular}[c]{@{}c@{}}Theorem B.2 (c)\\ $p$ divides $d^3-d^2$\end{tabular} & \begin{tabular}[c]{@{}c@{}}Theorem B.2 (c)\\ $p$ divides $d^3$\end{tabular}         & \begin{tabular}[c]{@{}c@{}}Theorem B.2 (c)\\ $p$ divides $d^3-1$\end{tabular}                         & \begin{tabular}[c]{@{}c@{}}Theorem B (a) \\ not satisfied\\ $m_1 \not\in \mathbb{Z}$\end{tabular}              \\ \hline
\begin{tabular}[c]{@{}c@{}}Theorem B (b.2)\\ $i=3$\end{tabular} & \begin{tabular}[c]{@{}c@{}}Theorem B.2 (d)\\ $k$ divides $d-1$\end{tabular}     & \begin{tabular}[c]{@{}c@{}}Same as case $p$ \\ divides $s+\lambda$\end{tabular}     & \begin{tabular}[c]{@{}c@{}}Theorem B.2 (d)\\ $k$ divides $d$\end{tabular}                             & \begin{tabular}[c]{@{}c@{}}Theorem B.2 (d)\\ $d$ divides $m$ and\\ $k$ divides $d^2-1$\end{tabular}            \\ \hline
\end{tabular}
}
\end{center}

\end{proof}

\begin{proof}[Proof of Theorem B.1]  
The proof is very similar to that of Theorem \hyperref[thmB2]{B.2}. In Theorem \hyperref[b]{B}, $n=3$, there are three different situations: Theorem \hyperref[b]{B} (b.1), $i=0$ and $i=1$, Theorem \hyperref[b]{B} (b.2), $i=2$. These, combined with the possibilities of Remark \ref{division}, give rise to 9 cases to consider. There is only one case that we do not have GK foliations, namely Theorem \hyperref[b]{B} (b.1), $i=0$, $p$ divides $r+\lambda$. In fact, the proof that $$m_1=\frac{(p+\lambda)(q+\lambda)(r+\lambda)}{pqr} \not\in \mathbb{Z}$$ is similar to that of case (1) of the proof of Theorem \hyperref[thmB2]{B.2}.

All the other eight cases provide families containing GK foliations, and we can proceed as in the previous proof to verify that the conditions of Theorem \hyperref[b]{B} are satisfied. By symmetry, the cases corresponding to Theorem \hyperref[b]{B} (b.1), $i=1$, $p$ divides $q+\lambda$ and $p$ divides $r+\lambda$ generate the same families of foliations. We summarize in the following table the correspondence between the two theorems.

\begin{center}
\resizebox{\textwidth}{!}{%
\begin{tabular}{|c|c|c|c|}
\hline
Case                                                            & $p \mid p+\lambda$                                                            & $p \mid q+\lambda$                                                              & $p \mid r+\lambda$                                                                             \\ \hline
\begin{tabular}[c]{@{}c@{}}Theorem B (b.1)\\ $i=0$\end{tabular} & \begin{tabular}[c]{@{}c@{}}Theorem B.1 (a)\\ $p$ divides $d^2$\end{tabular}   & \begin{tabular}[c]{@{}c@{}}Theorem B.1 (a)\\ $p$ divides $d^2+d+1$\end{tabular} & \begin{tabular}[c]{@{}c@{}}Theorem B (a) not satisfied\\ $m_1 \not\in \mathbb{Z}$\end{tabular} \\ \hline
\begin{tabular}[c]{@{}c@{}}Theorem B (b.1)\\ $i=1$\end{tabular} & Theorem B.1 (b)                                                               & \begin{tabular}[c]{@{}c@{}}Same as case $p$\\ divides $r+\lambda$\end{tabular}  & Theorem B.1 (c)                                                                                \\ \hline
\begin{tabular}[c]{@{}c@{}}Theorem B (b.2)\\ $i=2$\end{tabular} & \begin{tabular}[c]{@{}c@{}}Theorem B.1 (d)\\ $p$ divides $d^2-d$\end{tabular} & \begin{tabular}[c]{@{}c@{}}Theorem B.1 (d)\\ $p$ divides $d^2$\end{tabular}     & \begin{tabular}[c]{@{}c@{}}Theorem B.1 (d)\\ $p$ divides $d^2-1$\end{tabular}                  \\ \hline
\end{tabular}%
}
\end{center}
\end{proof}

From Theorem \hyperref[thmB1]{B.1}, for instance, we display the GK components of $\mathscr{F}_2(3,3)$ and $\mathscr{F}_2(4,3)$ given by Theorem \ref{thefour}.

\begin{cor}\label{corsomecomponents}
For each $p,q,r,\lambda$, $\overline{\mathcal{F}(p,q,r;\lambda,3)}$ is an irreducible component of $\mathscr{F}_2(3,3)$
\begin{center}
\begin{tabular}{|c||*{6}{c|}}\hline 
$p$ & $7$ & $7$ & $6$ & $4$ & $4$ & $3$ \\ \hline
$q$ & $6$ & $3$ & $5$ & $3$ & $2$ & $2$ \\ \hline
$r$ & $4$ & $2$ & $2$ & $2$ & $1$ & $1$ \\ \hline
$\lambda$ & $8$ & $4$ & $4$ & $4$ & $2$ & $2$ \\ \hline
\end{tabular}
\end{center}

For each $p,q,r,\lambda$, $\overline{\mathcal{F}(p,q,r;\lambda,4)}$ is an irreducible component of $\mathscr{F}_2(4,3)$
\begin{center}
\begin{tabular}{|c||*{13}{c|}}\hline 
$p$ & $13$ & $13$ & $13$ & $12$ & $9$ & $9$ & $9$ & $9$ & $8$ & $6$ & $6$ & $4$ & $3$ \\ \hline
$q$ & $12$ & $8$ & $4$ & $7$ & $8$ & $6$ & $4$ & $3$ & $3$ & $5$ & $3$ & $3$ & $2$ \\ \hline
$r$ & $9$ & $6$ & $3$ & $3$ & $6$ & $4$ & $3$ & $2$ & $2$ & $3$ & $2$ & $2$ & $1$ \\ \hline
$\lambda$ & $27$ & $18$ & $9$ & $9$ & $18$ & $12$ & $9$ & $6$ & $6$ & $9$ & $6$ & $6$ & $3$ \\ \hline
\end{tabular}
\end{center}
\end{cor}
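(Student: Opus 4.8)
The plan is to treat Corollary \ref{corsomecomponents} as a purely computational consequence of Theorem \hyperref[thmB1]{B.1}, specialized to $d=2$ (which produces the GK components of $\mathscr{F}_2(3,3)$) and to $d=3$ (which produces those of $\mathscr{F}_2(4,3)$). By Theorem \hyperref[thmB1]{B.1}, a family $\overline{\mathcal{F}(p,q,r;\lambda,d+1)}$ with $p>q>r$ relatively prime is a GK component exactly when $(p,q,r,\lambda)$, or its image under the symmetry of Proposition \ref{symmetry}, satisfies one of the four arithmetic relations (a)--(d). The observation that makes the list finite is that each of (a)--(d) forces $p$ to divide one of a fixed finite set of integers depending only on $d$ (namely $d^2$, $d^2+d+1$, $d^2-d$, or $d^2-1$), while cases (b) and (c) pin $p$ down as $p=d$, respectively $p=kd$ with $k\mid d+1$. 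Once $d$ is fixed there are only finitely many admissible $p$, and the strict inequality $p>q$ then caps the auxiliary multiplier $m$ (or $k$). Thus for $d=2,3$ one only has to solve finitely many elementary divisibility-and-inequality systems.

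The first step is the enumeration itself, carried out case by case. In case (a) one has $q=m(d+1)$, $r=md$, $\lambda=md^2$ with $\gcd(p,m)=1$ and $p\mid d^2$ or $p\mid d^2+d+1$; for $d=2$ the admissible $p$ are the divisors of $4$ and of $7$ exceeding $q=3m$, giving $(4,3,2,4)$, $(7,3,2,4)$, $(7,6,4,8)$. Case (b) requires $p=d$, which for $d=2$ is incompatible with $p>q>r\ge 1$ and contributes nothing (whereas for $d=3$ it yields $(3,2,1,3)$). Case (c) fixes $p=kd$ with $k\mid d+1$; for $d=2$ only $k=3$, $m=1$ survives $p>q$, giving $(6,5,2,4)$. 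Case (d) gives $q=md$, $r=m(d-1)$, $\lambda=m(d^2-d)$ with $p$ dividing one of $d^2-d,d^2,d^2-1$; for $d=2$ this yields $(3,2,1,2)$ and $(4,2,1,2)$. Assembling these produces exactly the six tuples of the first table. The same bookkeeping with $d=3$ (where in case (a) $p$ divides $9$ or $13$, in case (d) $p$ divides one of $6,9,8$, and cases (b),(c) fix $p=3$ and $p=3k$ with $k\mid 4$) reproduces the thirteen tuples of the second table.

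The last step, and the real point requiring care, is to guarantee that the tabulated tuples are in bijection with the GK components, i.e. that the enumeration is both complete and free of repetitions. Completeness is immediate because Theorem \hyperref[thmB1]{B.1} is an ``if and only if'' and every solution of (a)--(d) has been collected. For non-repetition one invokes Proposition \ref{symmetry}: the involution $(p,q,r,\lambda)\mapsto(p,\,p-r,\,p-q,\,p(d-1)-\lambda)$ sends a tuple to a second presentation of the \emph{same} component, so a priori a component could appear twice in the raw list. The obstacle is therefore to verify, for each listed tuple, that its image under this involution either violates (a)--(d) or is not itself listed, so that distinct entries represent distinct components. This is where I expect to spend the effort: a direct computation shows that for every tuple in the tables the involution sends $\lambda$ to a value that is either non-positive or fails to match any of the positive expressions $md^2$, $dr$, $m(d^2-d)$ prescribed by (a)--(d) (for instance $(7,6,4,8)\mapsto(7,3,1,-1)$ and $(4,3,2,4)\mapsto(4,2,1,0)$ in the $d=2$ case, and likewise $(9,4,3,9)\mapsto(9,6,5,9)$ fails the conditions for $d=3$). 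Once this verification is completed for $d=2$ and $d=3$, the raw counts equal the final counts, each GK component is listed exactly once, and the corollary follows.
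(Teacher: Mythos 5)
Your proposal is correct and follows exactly the route the paper intends: the corollary is stated without a separate proof precisely because it is the finite enumeration of solutions to conditions (a)--(d) of Theorem B.1 for $d=2$ and $d=3$, together with the symmetry of Proposition \ref{symmetry} to rule out repeated presentations. Your case-by-case lists (six tuples for $d=2$, thirteen for $d=3$) and your check that the involution $(p,q,r,\lambda)\mapsto(p,p-r,p-q,p(d-1)-\lambda)$ produces no coincidences among them are accurate.
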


\begin{cor}\label{answerproblem}
If $q \geq 3$, there are no $\lambda \neq 0$ and $d \geq 1$ such that $$\overline{\mathcal{F}(q+1,q,1;\lambda,d+1)}$$ contains some GK foliation.
\end{cor}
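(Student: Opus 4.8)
The plan is to reduce the statement to the classifications already obtained. First I would observe that, by Theorem~\hyperref[a]{A}, for $\lambda \neq 0$ the family $\mathcal{F}(q+1,q,1;\lambda,d+1)$ contains a GK foliation if and only if $\overline{\mathcal{F}(q+1,q,1;\lambda,d+1)}$ is a GK component of $\mathscr{F}_2(d+1,3)$. Thus it suffices to show that no GK component has the form $\overline{\mathcal{F}(q+1,q,1;\lambda,d+1)}$ when $q \geq 3$ and $\lambda \neq 0$. A key simplification is that the triple $(q+1,q,1)$ is self-dual under the symmetry of Proposition~\ref{symmetry}: since $\overline{p}_1=p_1=q+1$, $\overline{p}_2=p_1-p_3=q$ and $\overline{p}_3=p_1-p_2=1$, we have $\overline{\mathfrak{P}}=\mathfrak{P}$. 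Consequently the two alternatives ``$p,q,r,\lambda,d$'' and ``$\overline{p},\overline{q},\overline{r},\lambda_1,d$'' in Theorem~\hyperref[thmB1]{B.1} both refer to the same triple $(q+1,q,1)$, so I only need to rule out that $(q+1,q,1)$ satisfies any of the relations (a)--(d) for some admissible second parameter.

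For $d \geq 2$ I would go through the four relations of Theorem~\hyperref[thmB1]{B.1}, exploiting that the third coordinate of the triple equals $1$. In (a) and (c) one has $r=md=1$, forcing $m=d=1$ and contradicting $d \geq 2$. In (d) one has $r=m(d-1)=1$, forcing $m=1$ and $d=2$, whence $q=md=2$, contradicting $q \geq 3$. In (b) one has $q=r+1=2$, again contradicting $q \geq 3$. Since none of these structural constraints on $(p,q,r)$ depends on the value of the weight, the argument rules out the direct and the dual alternative simultaneously, so for $d \geq 2$ no such GK component exists.

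For $d=1$ the classifying tool is instead Corollary~\ref{gen-klein-lie}, which asserts that the unique GK component of $\mathscr{F}_2(2,3)$ of the form $\overline{\mathcal{F}(\mathfrak{P},\lambda,2)}$ is $\overline{\mathcal{F}(3,2,1;-1,2)}$. A GK component with $\lambda \neq 0$ matching our family would have to coincide with this one, forcing $(q+1,q,1)=(3,2,1)$, i.e.\ $q=2$, contradicting $q \geq 3$. Combining the two ranges of $d$ yields the corollary. I expect the only delicate point to be the reduction in the first paragraph---invoking Theorem~\hyperref[a]{A} to pass from ``contains a GK foliation'' to ``is a GK component,'' and correctly separating the case $d=1$ (handled by Corollary~\ref{gen-klein-lie}) from $d \geq 2$ (handled by Theorem~\hyperref[thmB1]{B.1}); the case analysis itself is entirely routine once the self-duality of $(q+1,q,1)$ is noted.
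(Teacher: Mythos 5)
Your proposal is correct and follows essentially the same route as the paper: note that $(q+1,q,1)$ is self-dual under Proposition \ref{symmetry} and check that none of the four relations of Theorem \hyperref[thmB1]{B.1} can hold when the smallest weight is $1$ and $q\geq 3$. You are in fact slightly more careful than the paper's own (very terse) proof in explicitly disposing of the case $d=1$ via Corollary \ref{gen-klein-lie}, which the paper leaves implicit.
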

\begin{proof}
A simple verification shows that $p=q+1,q,r=1$ do not satisfy any of the four relations of Theorem \hyperref[thmB1]{B.1}, regardless of choices for $\lambda \neq 0$ and $d \geq 2$. The result follows, since $\overline{p}=p$, $\overline{q}=q$ and $\overline{r}=r$.
\end{proof}

\begin{cor}\label{corfamilies}
For $d \geq 2$, $\overline{\mathcal{F}(p,q,r;\lambda,d+1)}$ is an irreducible component of $\mathscr{F}_2(d+1,3)$, for the following values of $p,q,r,\lambda$
\begin{center}
\begin{tabular}{|c|c|c|c|}\hline 
$p$ & $q$ & $r$ & $\lambda$ \\ \hline \hline
$d^2+d+1$ & $d+1$ & $1$ & $-1$ \\ \hline
$d^2+d+1$ & $d+1$ & $d$ & $d^2$ \\ \hline
$d^2+d$ & $2d+1$ & $d$ & $d^2$ \\ \hline
$d^2$ & $d+1$ & $d$ & $d^2$ \\ \hline
$d^2$ & $d$ & $1$ & $0$ \\ \hline
$d^2$ & $d$ & $d-1$ & $d^2-d$ \\ \hline
$d^2-1$ & $d$ & $d-1$ & $d^2-d$ \\ \hline
\end{tabular}
\end{center}
\end{cor}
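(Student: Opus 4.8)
The plan is to reduce the whole statement to Theorem \hyperref[thmB1]{B.1}, which already characterizes, by a purely arithmetic condition, exactly when a family $\overline{\mathcal{F}(p,q,r;\lambda,d+1)}$ is a GK component of $\mathscr{F}_2(d+1,3)$ — and every GK component is in particular an irreducible component. Concretely, Theorem \hyperref[thmB1]{B.1} asserts that this happens precisely when $(p,q,r,\lambda,d)$, or else its transform $(\overline{p},\overline{q},\overline{r},\lambda_1,d)$ with $\overline{p}=p$, $\overline{q}=p-r$, $\overline{r}=p-q$ and $\lambda_1=p(d-1)-\lambda$, satisfies one of the relations (a)--(d). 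Thus the corollary dissolves into a row-by-row verification: for each of the seven tabulated families I would first record admissibility, namely $p>q>r\geq 1$ and $\gcd(p,q,r)=1$ for $d\geq 2$, and then exhibit which relation it meets together with the explicit value of the auxiliary parameter $m$ or $k$.

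For the five rows with $\lambda>0$ the match is direct. The rows $(d^2+d+1,d+1,d;d^2)$ and $(d^2,d+1,d;d^2)$ fit relation (a) with $m=1$ (so $q=d+1$, $r=d$, $\lambda=d^2$), the divisibility clause holding because $p$ equals $d^2+d+1$, respectively $d^2$. The row $(d^2+d,2d+1,d;d^2)$ fits relation (c) with $k=d+1$ and $m=1$, since then $p=kd=d^2+d$, $q=md+k=2d+1$, $r=md=d$ and $k\mid d+1$. Finally the rows $(d^2,d,d-1;d^2-d)$ and $(d^2-1,d,d-1;d^2-d)$ fit relation (d) with $m=1$, the divisibility clause being $p\mid d^2$, respectively $p\mid d^2-1$.

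For the two rows with $\lambda\leq 0$ the original data cannot satisfy any relation, because each of (a)--(d) forces $\lambda>0$; so I would pass to the transform, whose weight $\lambda_1$ is positive for $d\geq 2$. The row $(d^2+d+1,d+1,1;-1)$ transforms to $(d^2+d+1,d^2+d,d^2;d^3)$, which satisfies relation (a) with $m=d$ and $p=d^2+d+1\mid d^2+d+1$; this recovers the $n=3$ instance of the generalized Klein--Lie component of Corollary \ref{gen-klein-lie}. The row $(d^2,d,1;0)$ transforms to $(d^2,d^2-1,d^2-d;d^3-d^2)$, which satisfies relation (a) with $m=d-1$: indeed $q=m(d+1)=d^2-1$, $r=md=d^2-d$, $\lambda=md^2=d^3-d^2$, $\gcd(d^2,d-1)=1$, and $p=d^2\mid d^2$.

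The computations are all elementary, so there is no serious obstacle; the one point that requires genuine care is the $\lambda=0$ row, whose direct form satisfies none of (a)--(d), forcing the transformed representation, and there the correct multiplier is the non-obvious value $m=d-1$ rather than the naive $m=d$. Beyond that, the only things to watch are the coprimality of each transformed triple, which follows from consecutive-difference arguments such as $\gcd(d^2,d^2-1)=1$ and $\gcd(d^2+d+1,d^2+d)=1$, and the inequalities $\overline{p}>\overline{q}>\overline{r}\geq 1$, all of which are immediate once the values are written out.
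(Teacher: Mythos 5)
Your proposal is correct and follows essentially the same route as the paper: both reduce the corollary to the arithmetic characterization of Theorem B.1 and verify each row, the paper instantiating relations (a), (c), (d) with the same parameter choices ($m=1$, $k=d+1$, and $m=d$, $m=d-1$ for the two rows requiring Proposition \ref{symmetry}) and you simply traversing the symmetry in the opposite direction. All your row-by-row identifications, including the transformed triples and the value $m=d-1$ for the $\lambda=0$ row, match the paper's substitutions.
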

\begin{proof}
In Theorem \hyperref[thmB1]{B.1}, just make the following substitutions: in (a), $m=d$, $p=d^2+d+1$ and apply Proposition \ref{symmetry}; in (a), $m=1$ and $p=d^2+d+1$; in (c), $k=d+1$ and $m=1$; in (a), $m=1$ and $p=d^2$; in (a), $m=d-1$, $p=d^2$ and apply Proposition \ref{symmetry}; in (d), $m=1$ and $p=d^2$; in (d), $m=1$ and $p=d^2-1$.
\end{proof}

Similar results can be established from Theorem \hyperref[thmB2]{B.2}. We have for instance

\begin{cor}\label{cod2comp}
For each $p,q,r,s,\lambda$, $\overline{\mathcal{F}(p,q,r,s;\lambda,3)}$ is an irreducible component of $\mathscr{F}_2(3,4)$
\begin{center}
\begin{tabular}{|c||*{10}{c|}}\hline 
$p$ & $15$ & $15$ & $14$ & $12$ & $8$ & $8$ & $7$ & $6$ & $6$ & $4$  \\ \hline
$q$ & $14$ & $7$ & $11$ & $8$ & $7$ & $4$ & $4$ & $5$ & $5$ & $3$ \\ \hline
$r$ & $12$ & $6$ & $6$ & $3$ & $6$ & $3$ & $3$ & $4$ & $3$ & $2$ \\ \hline
$s$ & $8$ & $4$ & $4$ & $2$ & $4$ & $2$ & $2$ & $2$ & $2$ & $1$ \\ \hline
$\lambda$ & $16$ & $8$ & $8$ & $4$ & $8$ & $4$ & $4$ & $4$ & $4$ & $2$ \\ \hline
\end{tabular}
\end{center}
\end{cor}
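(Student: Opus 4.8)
The plan is to specialize Theorem \hyperref[thmB2]{B.2} to degree $d+1=3$, that is $d=2$, and then to enumerate exhaustively the tuples $(p,q,r,s,\lambda)$ meeting one of the four families (a)--(d). Once $d=2$ is substituted, the defining data become completely explicit: family (a) reads $q=7m$, $r=6m$, $s=4m$, $\lambda=8m$ with $p\mid 8$ or $p\mid 15$; family (b) reads $p=2k$, $q=2m+k$, $r=3m$, $s=2m$, $\lambda=4m$; family (c) reads $q=4m$, $r=3m$, $s=2m$, $\lambda=4m$ with $p\mid 4$, $p\mid 8$ or $p\mid 7$; and family (d) reads $p=2k$, $q=m+k$, $r=2m$, $s=m$, $\lambda=2m$. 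In each family the strict chain $p>q>r>s$, the coprimality hypothesis, and the stated divisibility clauses confine the free parameters to finitely many values.

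First I would solve the inequalities and congruences family by family. In (a) and (c) the inequality $p>q$ forces $m$ to be very small, so I would simply list the admissible pairs $(m,p)$; this produces $(8,7,6,4;8)$, $(15,7,6,4;8)$, $(15,14,12,8;16)$ from (a) and $(8,4,3,2;4)$, $(7,4,3,2;4)$ from (c). In (b) the chain gives $k>2m$ and in (d) it gives $k>m$, whence, combined with $\gcd(k,m)=1$ and the alternative divisibility conditions (for instance $2k\mid 6m+k$, or $2\mid m$ together with $k\mid 7$, or $k\mid 3$ with $\gcd(3m/k,2)=1$), only finitely many pairs $(k,m)$ survive; this yields $(12,8,3,2;4)$, $(14,11,6,4;8)$, $(6,5,3,2;4)$ from (b) and $(6,5,4,2;4)$, $(4,3,2,1;2)$ from (d). By construction each of these ten tuples satisfies one of the relations of Theorem \hyperref[thmB2]{B.2}, so the corresponding $\overline{\mathcal{F}(p,q,r,s;\lambda,3)}$ is a GK component, and collecting them reproduces the table.

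Next I would certify that the list is both complete and irredundant. Completeness amounts to checking that the finite search above has exhausted every admissible $(m,k,p)$ in each family. For irredundancy I would invoke Proposition \ref{symmetry}: two presentations define the same component precisely when one arises from the other via $\mathfrak{P}\mapsto\overline{\mathfrak{P}}=(p,p-s,p-r,p-q)$ and $\lambda\mapsto\lambda_1=p-\lambda$. Applying this involution to each of the ten tuples, one finds that the image is never another member of the list: it either has $\lambda_1\le 0$ (as for $(15,14,12,8;16)$, where $\lambda_1=-1$, or $(8,7,6,4;8)$, where $\lambda_1=0$) or is a positive-weight tuple absent from the ten (for instance $(15,7,6,4;8)$ maps to $(15,11,9,8;7)$), the only fixed point being the self-symmetric $(4,3,2,1;2)$. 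Hence the ten entries represent pairwise distinct components.

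The main obstacle I anticipate is exactly this bookkeeping. Families (b) and (d) share the shape $p=2k$ with overlapping parameter ranges, and the symmetry of Proposition \ref{symmetry} can carry a tuple from one family into the $\overline{\mathfrak{P}}$-presentation of another; the delicate point is thus to confirm simultaneously that no admissible tuple is overlooked and that no single component is recorded under two distinct presentations. Once this is settled, the displayed table is precisely the set of GK components of $\mathscr{F}_2(3,4)$ arising from Theorem \hyperref[thmB2]{B.2}.
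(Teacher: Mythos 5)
Your proposal is correct and follows exactly the route the paper intends: the corollary is presented as a direct specialization of Theorem B.2 to $d=2$, and your case-by-case enumeration of families (a)--(d) (with the bounds $p>7m$, $p>4m$, $k>2m$, $k>m$ forcing finitely many parameters) reproduces precisely the ten tuples of the table, with the divisibility and coprimality checks all verified correctly. The additional completeness and irredundancy analysis via Proposition \ref{symmetry} goes slightly beyond what the statement requires (it only asserts that each listed tuple yields a component), but it is accurate and consistent with the paper's remark that the analogous table for $n=3$ lists \emph{all} GK components of the given degree.
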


\vspace{0.3cm}

\subsection*{\emph{Acknowledgements}}
I am deeply grateful to A. Lins Neto for the several fruitful discussions. I would like also to thank R. Lizarbe, T. Fassarela and H. Movasati for the suggestions and comments on the manuscript. This work was developed at IMPA (Rio de Janeiro, Brazil) and partially supported by CNPq and CAPES.

\end{document}